\newtheorem{theorem}{Theorem}[section]
\theoremstyle{plain}
\newtheorem{lemma}[theorem]{Lemma}
\newtheorem{proposition}[theorem]{Proposition}
\theoremstyle{definition}
\newcommand{\q}[1]{\overline{#1}}
\newcommand{\mc}[1]{\mathcal{#1}}
\newcommand{\mbf}[1]{\mathbf{#1}}
\newcommand{\mr}[1]{\mathrm{#1}}
\newcommand{\gen}[1]{\langle {#1} \rangle}
\newcommand{\nrm}{\trianglelefteq}
\newcommand{\wt}[1]{\widetilde{#1}}
\newcommand{\Z}{\mathbb{Z}} %Bold Z, integers
\newcommand{\Cen}[2]{\mathbf{C}_{#1}(#2)} % centralizer in #1 of #2
\newcommand{\Norm}[2]{\mathbf{N}_{#1}(#2)} % normalizer in #1 of #2
\renewcommand{\sl}[2]{\mathrm{SL}_{#1}(#2)} % SL_{#1}(#2)
\newcommand{\pgl}[2]{\mathrm{PGL}_{#1}(#2)} % PGL_{#1}(#2)
\newcommand{\psl}[2]{\mathrm{PSL}_{#1}(#2)} % PSL_{#1}(#2)
\newcommand{\su}[2]{\mathrm{SU}_{#1}(#2)}   % SU_{#1}(#2)
\newcommand{\psu}[2]{\mathrm{PSU}_{#1}(#2)} % PSU_{#1}(#2)
\renewcommand{\sp}[2]{\mathrm{Sp}_{#1}(#2)} % Sp_{#1}(#2)
\newcommand{\psp}[2]{\mathrm{PSp}_{#1}(#2)} % PSp_{#1}(#2)
\definecolor{codegray}{rgb}{0.5,0.5,0.5}
\definecolor{codepurple}{rgb}{0.58,0,0.82}
\definecolor{backcolour}{rgb}{0.95,0.95,0.92}
\lstdefinestyle{mystyle}{
    backgroundcolor=\color{backcolour},   
    commentstyle=\color{black},
    keywordstyle=\color{black},
    numberstyle=\tiny\color{codegray},
    stringstyle=\color{codepurple},
    basicstyle=\ttfamily\footnotesize,
    breakatwhitespace=false,         
    breaklines=true,                 
    captionpos=b,                    
    keepspaces=true,                 
    %numbers=left,
    numbers=none,
    numbersep=5pt,                  
    showspaces=false,                
    showstringspaces=false,
    showtabs=false,                  
    tabsize=2
}
\title[Maximal subgroups of almost odd index]{Finite groups whose maximal subgroups have almost odd index}
\author{Christopher A. Schroeder and Hung P. Tong-Viet}
\date{\today}
\begin{document}

\maketitle

\vspace{-7pt}
\begin{quote}
\footnotesize
\textsc{Abstract.} A recurring theme in finite group theory is understanding how the structure of a finite group is determined by the arithmetic properties of group invariants. There are results in the literature determining the structure of finite groups whose irreducible character degrees, conjugacy class sizes or indices of maximal subgroups are odd. These results have been extended to include those finite groups whose character degrees or conjugacy class sizes are not divisible by $4$. In this paper, we determine the structure of finite groups whose maximal subgroups have index not divisible by $4$. As a consequence, we obtain some new $2$-nilpotency criteria.
\end{quote}
\vspace{15pt}

%%%%%%%%%%%%%%%%%%%%%%%%%%%%%%%%%%%%%%%
%%%%%%%% Introduction %%%%%%%%%%%%%%%%%
%%%%%%%%%%%%%%%%%%%%%%%%%%%%%%%%%%%%%%%
\section{Introduction}\label{sec:intro}

There is a long history of deducing statements about the structure of finite groups from the arithmetic structure of group invariants. Character degrees, conjugacy class sizes and indices of maximal subgroups are all invariants that have been considered in the literature. The prime $2$ often plays a prominent role in these investigations. To simplify our exposition, we sometimes say that an integer not divisible by $4$ is ``almost odd''. Note that an odd integer is also almost odd.

With regard to character degrees, the It{\^o}--Michler theorem says that the irreducible character degrees of a finite group~$G$ are odd if and only if $G$ has a normal and abelian Sylow $2$-subgroup \cite{Na18}. In this case, $G$ is solvable by the Feit--Thompson theorem. Lewis relaxed this condition in~\cite{Le07}, where he showed that the irreducible character degrees of a nonsolvable finite group $G$ are almost odd if and only if $G \cong \mr{A}_7 \times S$, where $S$ is solvable with odd irreducible character degrees. Turning to conjugacy classes, Chillag and Herzog proved in~\cite{CH90} that all conjugacy classes of a finite group $G$ have odd size if and only if a Sylow $2$-subgroup of $G$ is central, so $G$ is solvable. In the same paper, they show that if the size of every conjugacy class of a finite group $G$ is almost odd, then $G$ is solvable. These results on the sizes of conjugacy classes still hold upon restricting to real conjugacy class sizes. Dolfi, Navarro and Tiep proved in \cite[Theorem 6.1]{DNT08} that a finite group is solvable if its real class sizes are odd, and it is an immediate consequence of a result of the second-named author that a finite group is solvable if its real class sizes are almost odd~\cite{HTV13}. For maximal subgroups, Guralnick proved in~\cite{Gu86} that if a finite group $G$ has even order and its maximal subgroups have odd index, then $G/\mbf{O}_{2',2}(G) \cong \mr{A}_7$. In this paper, we weaken this hypothesis and determine the structure of finite groups whose maximal subgroups have almost odd index. All groups in this paper are finite.

In our first theorem, we solve this problem for almost simple groups using the classification of finite simple groups.

%1.1 
\begin{theorem}\label{thm1:almost-simple} 
Let $G$ be a finite almost simple group with simple socle $S$. Every maximal subgroup of $G$ has index not divisible by $4$ if and only if $G$ is isomorphic to $\mr{A}_6$, $\mr{S}_6$, $\mr{A}_7$, $\psu{3}{5}$, $\psu{3}{5}.2$ or $S$ belongs to one of the two infinite families $\psl{2}{q}$ with $q \equiv 5 \, (8)$ and $\mr{G}_2(q)$ with $q=5^{2n+1}$ $(n\ge 0$, $n \neq 1)$.
\end{theorem}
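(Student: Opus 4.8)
The plan is to proceed by a case analysis over the isomorphism type of the socle $S$, using the classification of finite simple groups. The key reformulation of the hypothesis is this: for a subgroup $M\le G$ and a Sylow $2$-subgroup $Q$ of $G$, one has $4\nmid [G:M]$ if and only if $|M|_2\ge |Q|/2$, i.e.\ $M$ contains a subgroup of index at most $2$ in a conjugate of $Q$. So the hypothesis says exactly that $G$ has \emph{no} maximal subgroup $M$ with $|M|_2\le|Q|/4$. For the ``if'' direction I would verify each group in the list directly, reading off its maximal subgroups and the $2$-part of each index -- using Dickson's classification of the subgroups of $\psl{2}{q}$, the known maximal subgroup lists for $\mr{G}_2(q)$, and the \textsc{Atlas} for $\psu{3}{5}$, $\psu{3}{5}.2$, $\mr{A}_6$, $\mr{S}_6$ and $\mr{A}_7$. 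The substance is the ``only if'' direction: for every almost simple $G$ whose socle is not in the list, exhibit a maximal subgroup of index divisible by $4$.

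For $G\in\{\mr{A}_n,\mr{S}_n\}$ (the small $n$ being covered via exceptional isomorphisms) I would combine the O'Nan--Scott description of the maximal subgroups -- intransitive $\mr{S}_k\times\mr{S}_{n-k}$, imprimitive $\mr{S}_a\wr \mr{S}_b$, and the primitive ones -- with Kummer's theorem on $v_2\!\left(\binom{n}{k}\right)$. The intransitive subgroups with $k=1,2,3$ already force $4\nmid n$, then $8\nmid n(n-1)$, and then $n\not\equiv 2\pmod 4$ once $n\ge 7$; for the few residual $n$ on which all $\binom{n}{k}$ stay odd or twice odd, the imprimitive subgroups ($n$ composite) or a primitive maximal subgroup of degree $n$ of index divisible by $4$ ($n$ prime, e.g.\ $\mr{M}_{11}\le\mr{A}_{11}$ of index $2520$, or $\mathrm{AGL}_1(11)\le\mr{S}_{11}$) finish the job. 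This leaves only $n\le 7$, yielding $\mr{A}_6$, $\mr{S}_6$, $\mr{A}_7$ (while $\mr{A}_5,\mr{S}_5$ reappear as $\psl{2}{5},\pgl{2}{5}$, and $\mr{A}_8\cong\psl{4}{2}$ fails since $[\mr{A}_8:\mr{A}_7]=8$).

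For the sporadic groups and their automorphic extensions I would run through the \textsc{Atlas}: in each case the Sylow $2$-subgroup is large and a small maximal subgroup has index divisible by $4$ (e.g.\ $\psl{2}{11}\le\mr{M}_{11}$ of index $12$), so no sporadic socle occurs. For groups of Lie type: in characteristic $2$ the parabolic subgroups have odd index, but every non-parabolic maximal subgroup -- a torus normalizer, a subfield subgroup, a cross-characteristic subgroup -- has $2$-part well below $|G|_2/2$ unless $|G|_2$ is tiny, so here nothing new survives beyond $\psl{2}{4}\cong\mr{A}_5$ and $\sp{4}{2}'\cong\mr{A}_6$. In odd characteristic, requiring $4\nmid[G:M]$ for all $M$ forces $|G|_2$ to be very small, which by the order formulas sharply bounds the untwisted rank and the field; after treating $\psl{2}{q}$ via Dickson's theorem -- where the condition is equivalent to $q\equiv 5\pmod 8$ (so a Sylow $2$-subgroup is a Klein four-group) apart from the coincidence $\psl{2}{9}\cong\mr{A}_6$, and then holds for every almost simple group with that socle -- the remaining candidates form a bounded list of small classical and exceptional groups, handled with Aschbacher's theorem for the classical ones and the maximal subgroup classifications for the exceptional ones; this is where $\psu{3}{5}$, $\psu{3}{5}.2$, and the family $\mr{G}_2(5^{2n+1})$ emerge.

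The main obstacle is precisely this final step. For the small Lie type groups of odd characteristic with $|G|_2$ near the borderline -- the exceptional groups and classical groups over small fields -- an order estimate does not suffice, and one has to enumerate the maximal subgroups, with the congruence conditions on $q$ under which each occurs, and compute their $2$-parts. The delicate points are: (i) pinning down $q\equiv 5\pmod 8$, rather than a coarser congruence, for $\psl{2}{q}$; (ii) showing that among the unitary groups only $\psu{3}{5}$ arises, and that of its automorphic extensions precisely $\psu{3}{5}$ and $\psu{3}{5}.2$ qualify while $\mathrm{PGU}_3(5)$ does not; and (iii) isolating the excluded value $q=5^3$ in the $\mr{G}_2$ family, where a maximal subgroup of index divisible by $4$ appears that is absent for the other $q=5^{2n+1}$. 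Finally, for each surviving socle one must determine exactly which overgroups $S\le G\le\mathrm{Aut}(S)$ retain the property -- e.g.\ $\mr{S}_6$ does but $\pgl{2}{9}$ and $\mr{M}_{10}$ do not -- bearing in mind that a ``novelty'' maximal subgroup of an extension can have small $2$-part even when $S$ itself has none.
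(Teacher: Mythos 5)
Your overall strategy coincides with the paper's: a case analysis over the classification of finite simple groups, using the known maximal subgroup classifications (Bray--Holt--Roney-Dougal, Kleidman--Liebeck, Craven, the \textsc{Atlas}/GAP for small cases), Kummer's theorem for the alternating groups, and explicit computations of the $2$-part of indices to isolate $\psl{2}{q}$ with $q\equiv 5\,(8)$, $\mr{G}_2(5^{2n+1})$ with $n\neq 1$, and the five sporadic examples. The difficulties you flag (pinning down the congruence on $q$, separating $\psu{3}{5}$ and $\psu{3}{5}.2$ from $\mathrm{PGU}_3(5)$, excluding $q=5^3$, and novelties in extensions) are exactly the ones that arise.

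The one step your plan leaves unaddressed, and which is a genuine logical gap as written, is the passage from the socle $S$ to the almost simple group $G$ in the ``only if'' direction. Exhibiting a maximal subgroup $H_S$ of $S$ with $4\mid[S:H_S]$ does not by itself produce a maximal subgroup of $G$ of index divisible by $4$: $H_S$ need not extend to a maximal subgroup of $G$, and a maximal overgroup of $\Norm{G}{H_S}$ could a priori be much larger than $\Norm{G}{H_S}$. The paper isolates the fix in Lemma~\ref{l:c=1}: if $S$ acts transitively on the $\mathrm{Aut}(S)$-class of $H_S$ (the condition $c=1=\pi$, which is recorded in the standard tables), then $\Norm{G}{H_S}$ is itself a faithful maximal subgroup of $G$ of index exactly $[S:H_S]$. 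Every witness subgroup in the paper is chosen so that this condition holds, which is why a single computation inside $S$ disposes of all (unboundedly many) groups between $S$ and $\mathrm{Aut}(S)$; your plan needs some such device. Two smaller remarks: the paper in fact proves the stronger Theorem~\ref{thm:almost-simple-faithful}, that the witness can be taken \emph{faithful}, and this refinement is what is reused later in the proof of Theorem~\ref{thm3:almost-odd}, so it is worth building in from the start. And your proposed first reduction in odd characteristic --- that the hypothesis forces $|G|_2$ to be very small --- is not how the argument can run: $|\mr{G}_2(5^{2n+1})|_2=2^6$ for every $n$, so the surviving groups are not detected by the size of a Sylow $2$-subgroup, and one really does have to exhibit, family by family, a specific maximal subgroup (the paper mostly uses Aschbacher class $\mc{C}_3$ subgroups and parabolics) whose index is divisible by~$4$.
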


In fact, we will prove a statement that is slightly stronger than Theorem \ref{thm1:almost-simple} in Theorem~\ref{thm:almost-simple-faithful}, and this stronger statement will be required in our proof of Theorem~\ref{thm3:almost-odd}. Turning to solvable groups, our next result is a recognition theorem for $p$-nilpotency that we have not found in the literature. Note that if $G$ is $p$-solvable for the smallest prime $p$ dividing $|G|$, then $G$ is solvable by the Feit--Thompson theorem.

%1.2
\begin{theorem}\label{thm2:solvable}
Let $G$ be a finite $p$-solvable group, where $p$ is the smallest prime dividing the order of $G$. Every maximal subgroup of $G$ has index not divisible by $p^2$ if and only if $G$ has a normal $p$-complement.
\end{theorem}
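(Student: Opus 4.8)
The plan is to prove the ``if'' direction directly and the converse by induction on $|G|$, successively stripping off $\mathbf{O}_{p'}(G)$ and then $\Phi(G)$ until one reaches a configuration in which the hypothesis that $p$ is the \emph{smallest} prime dividing $|G|$ can be exploited. For the ``if'' direction: let $N \trianglelefteq G$ be a normal $p$-complement and let $M$ be a maximal subgroup of $G$. Either $N \le M$, in which case $M/N$ is maximal in the $p$-group $G/N$ and so $|G:M| = p$; or $G = NM$ by maximality, in which case $|G:M| = |N:N\cap M|$ divides $|N|$ and is therefore prime to $p$. In both cases $p^2 \nmid |G:M|$. (Note that this half of the statement uses nothing about $p$ being smallest.)

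For the converse I would first record that $G$ is solvable by the Feit--Thompson theorem, as observed before the statement, and then induct on $|G|$, assuming every maximal subgroup of $G$ has index not divisible by $p^2$. If $\mathbf{O}_{p'}(G) \neq 1$, then $G/\mathbf{O}_{p'}(G)$ inherits all the hypotheses (its maximal subgroups are the images of those of $G$ containing $\mathbf{O}_{p'}(G)$, with the same indices, and $p$ is still its smallest prime divisor), so by induction it has a normal $p$-complement, whose preimage is a normal $p$-complement of $G$. So I may assume $\mathbf{O}_{p'}(G) = 1$. Then $\Phi(G)$ is a $p$-group, since it is nilpotent and $\mathbf{O}_{p'}(\Phi(G))$ is characteristic in $\Phi(G) \trianglelefteq G$, hence trivial. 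If $\Phi(G) \neq 1$, then $G/\Phi(G)$ again inherits all the hypotheses (here $p \mid |G/\Phi(G)|$, for otherwise $\Phi(G)$ would be a normal, hence complemented, Sylow $p$-subgroup, forcing $\Phi(G) = 1$), so by induction it has a normal $p$-complement $K/\Phi(G)$; writing $K = \Phi(G) \rtimes T$ by Schur--Zassenhaus, $T$ is a Hall $p'$-subgroup of $G$, and the Frattini argument yields $G = K\,\mathbf{N}_G(T) = \Phi(G)\,\mathbf{N}_G(T) = \mathbf{N}_G(T)$, so $T \trianglelefteq G$ is a normal $p$-complement.

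It remains to treat the residual case $\mathbf{O}_{p'}(G) = \Phi(G) = 1$, where I claim $G$ is an elementary abelian $p$-group (which trivially has a normal $p$-complement). By standard facts about solvable groups with trivial Frattini subgroup, $F := F(G) = \mathbf{O}_p(G) = \mathrm{Soc}(G)$ is a direct product $N_1 \times \cdots \times N_r$ of minimal normal subgroups, each of which is elementary abelian and, since $\mathbf{O}_{p'}(G) = 1$, a $p$-group; moreover $F$ is self-centralizing, so $G/F$ embeds in $\mathrm{Aut}(F)$. Fix $i$. Since $N_i \neq 1 = \Phi(G)$, some maximal subgroup $M$ does not contain $N_i$; then $G = N_iM$, and $N_i \cap M$, being normal in $N_i$ (abelian) and in $M$, is normal in $G$, hence trivial, so that $|G:M| = |N_i|$. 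As $|N_i|$ is a $p$-power not divisible by $p^2$, we get $|N_i| = p$. Thus $F \cong (C_p)^r$, and since each $N_i$ is $G$-invariant, $G/F$ embeds in the diagonal subgroup $\prod_i \mathrm{Aut}(N_i) \cong (C_{p-1})^r$ of $\mathrm{Aut}(F)$; in particular every prime dividing $|G/F|$ divides $p-1$ and so is smaller than $p$. Since $p$ is the smallest prime dividing $|G|$, this forces $G = F$, and the claim follows.

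The inductive bookkeeping is essentially routine; the point that needs the most care is the passage modulo $\Phi(G)$, which works precisely because $\mathbf{O}_{p'}(G) = 1$ forces $\Phi(G)$ to be a $p$-group, so that a Hall complement of $\Phi(G)$ in $K$ is a Hall $p'$-subgroup of $G$ and the Frattini argument is available. The conceptual heart, though, is the residual case: the index hypothesis pins every minimal normal subgroup down to order exactly $p$, after which the diagonal action on $F(G)$ together with the minimality of $p$ finishes everything at once. I expect this ``order $p$'' observation to be the key step, and the $\Phi(G)$-reduction to be the most delicate piece of bookkeeping.
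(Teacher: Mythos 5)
Your proof is correct, but it takes a genuinely different and considerably longer route than the paper's. The paper's converse argument is a direct, non-inductive one: since $G$ is $p$-solvable, it has a Hall $p'$-subgroup $H$ with all Hall $p'$-subgroups conjugate; a maximal subgroup $M \supseteq \mathbf{N}_G(H)$ then has $p$-power index, hence index exactly $p$ by hypothesis, hence is normal because $p$ is the smallest prime dividing $|G|$, and the Frattini argument gives $G = M\,\mathbf{N}_G(H) = M$, a contradiction --- so $H \trianglelefteq G$ from the start. Your induction with reductions modulo $\mathbf{O}_{p'}(G)$ and $\Phi(G)$, followed by the Gasch\"utz-style analysis of the residual case ($F(G) = \mathrm{Soc}(G)$ self-centralizing, each minimal normal subgroup forced to have order $p$ by the index hypothesis, and $G/F(G) \hookrightarrow (C_{p-1})^r$ killed by the minimality of $p$), is sound at every step; in particular you correctly handle the two delicate points, namely that $\Phi(G)$ is a $p$-group once $\mathbf{O}_{p'}(G)=1$ and that the Schur--Zassenhaus/Frattini combination lifts the normal $p$-complement through $\Phi(G)$. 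What your approach buys is a structural explanation of \emph{where} the minimality of $p$ enters (the action on the socle has order dividing $(p-1)^r$); what the paper's approach buys is brevity and the observation, recorded in its proof, that the same two-line argument shows any $p$-supersolvable group with $p$ smallest is $p$-nilpotent. Both proofs agree on the easy direction and on noting that it requires neither $p$-solvability nor the minimality of $p$.
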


Recall that $\mbf{O}_{2'}(G)$ is the largest normal subgroup of odd order in a group $G$, and $\mbf{O}_2(G)$ is the largest normal $2$-subgroup of~$G$. Define the normal subgroup $\mbf{O}_{2',2}(G)$ of $G$ by $\mbf{O}_{2',2}(G)/\mbf{O}_{2'}(G)=\mbf{O}_2(G/\mbf{O}_{2'}(G))$. Note that Theorem \ref{thm2:solvable} says that if $G$ is a finite solvable group, then its maximal subgroups have almost odd index if and only if $G=\mbf{O}_{2',2}(G)$. In our next theorem, we describe the structure of nonsolvable finite groups in which all maximal subgroups have almost odd index.

%1.3
\begin{theorem}\label{thm3:almost-odd}
Let $G$ be a nonsolvable finite group, and let $\q{G}=G/\mbf{O}_{2',2}(G)$. If the index of every maximal subgroup of $G$ is not divisible by $4$, then \mbox{$\q{G}' \cong N_1 \times \dots \times N_r$}, where each $N_i$ is a simple minimal normal subgroup of $\q{G}$ and is isomorphic to one of the simple groups
\begin{enumerate}[nolistsep,label=\textup{(\arabic*)}]
    \item $\mr{A}_6$, $\mr{A}_7$, $\psu{3}{5}$,
    \item $\psl{2}{q}$ with $q \equiv 5  \, (8)$, or
    \item $\mr{G}_2(q)$ with $q=5^{2n+1}$ $(n \ge 0$, $n \neq 1)$.
\end{enumerate}
If $N_i \cong \mr{A}_6$, then $\mr{A}_6 \le \q{G}/\Cen{\q{G}}{N_i} \le \mr{S}_6$; if $N_i \cong \mr{A}_7$, then $\q{G}/\Cen{\q{G}}{N_i} \cong \mr{A}_7$; if $N_i \cong \psu{3}{5}$, then $\psu{3}{5} \le \q{G}/\Cen{\q{G}}{N_i} \le \psu{3}{5}.2$. The Sylow $2$-subgroups of $\q{G}$ have derived length at most $2$.
\end{theorem}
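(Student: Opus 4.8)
The plan is to reduce the problem to Theorem~\ref{thm:almost-simple-faithful}, arguing by induction on $|G|$. Two preliminary points: the hypothesis ``every maximal subgroup has index not divisible by $4$'' passes to quotients, because for $N\nrm G$ the maximal subgroups of $G/N$ are exactly the images of the maximal subgroups of $G$ containing $N$, with equal index; and $G/N$ is again nonsolvable whenever $N$ is solvable.

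First suppose $G$ has a minimal normal subgroup $N$ of odd order. Then $N\le\mbf{O}_{2'}(G)$, and one checks directly that $\mbf{O}_{2',2}(G/N)=\mbf{O}_{2',2}(G)/N$, so $\overline{G/N}=\q{G}$; applying the induction hypothesis to $G/N$ (nonsolvable, and satisfying the hypothesis) gives the conclusion for $\q{G}$. So we may assume $\mbf{O}_{2'}(G)=1$. If moreover $\mbf{O}_2(G)=1$ we argue as in the next paragraph; otherwise $\mbf{O}_2(G)\ne 1$, which is the crucial case. Here we must show that passing to $G/\mbf{O}_2(G)$ is ``safe'', i.e.\ $\mbf{O}_{2',2}\bigl(G/\mbf{O}_2(G)\bigr)=1$, which (as $\mbf{O}_{2'}(G)=1$ forces $\mbf{O}_{2',2}(G)=\mbf{O}_2(G)$) gives $\overline{G/\mbf{O}_2(G)}=\q{G}$ and lets us finish by induction. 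Writing $\tilde K$ for the preimage in $G$ of $\mbf{O}_{2'}\bigl(G/\mbf{O}_2(G)\bigr)$, the group $\mbf{O}_2(G)$ is a normal Sylow $2$-subgroup of $\tilde K$, so $\tilde K=\mbf{O}_2(G)\rtimes K$ for a Hall $2'$-subgroup $K$; by the Frattini argument $G=\mbf{O}_2(G)\,\Norm{G}{K}$, so $|G:\Norm{G}{K}|$ is a power of $2$. Since a maximal subgroup of $G$ containing $\Norm{G}{K}$ then has $2$-power index, the hypothesis forces that index to lie in $\{1,2\}$; a short analysis (using that a minimal normal subgroup of $G$ inside $\mbf{O}_2(G)$ is elementary abelian and that $K$ is a $2'$-group) then forces $K\nrm G$, whence $K\le\mbf{O}_{2'}(G)=1$ and $\mbf{O}_{2',2}\bigl(G/\mbf{O}_2(G)\bigr)=1$. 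I expect this reduction to be the main obstacle.

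After these reductions $G$ has no abelian minimal normal subgroup, so $F(G)=1$; hence $G$ has trivial solvable radical, $F^{*}(G)=E(G)$ is the direct product of its components, $\Cen{G}{E(G)}=1$, and $\q{G}=G$. Let $N$ be a minimal normal subgroup of $G$; then $N=T_1\times\cdots\times T_k$ with the $T_i$ isomorphic to a nonabelian simple group $T$ and permuted transitively by $G$, and $Y:=G/\Cen{G}{N}$ has socle isomorphic to $T^k$, is transitive on the $k$ factors, and satisfies the hypothesis. If $k\ge 2$ then $Y$ possesses a maximal subgroup of index divisible by $4$ --- produced from the permutation action on the $k$ factors together with diagonal- and product-type subgroups relative to $\mathrm{soc}(Y)$, the point being that $4$
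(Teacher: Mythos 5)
Your opening reductions follow the same route as the paper (pass to $G/\mbf{O}_{2'}(G)$, then show $\mbf{O}_{2'}(G/\mbf{O}_2(G))=1$ via a Frattini argument), but already here there is an unproved step: you assert that ``a short analysis \dots forces $K\nrm G$'' without giving it. This is not a formality. The paper's argument at this point needs a \emph{second} application of Hall's theorem and the Frattini argument: from $[G:M]=2$ one gets $M\nrm G$, hence $\gen{K^{G}}\le M$; since $K$ is a Hall $2'$-subgroup of the normal subgroup $\gen{K^{G}}$, Frattini gives $G=\gen{K^{G}}\,\Norm{G}{K}\le M$, a contradiction. You should supply this (or an equivalent) argument rather than gesture at it.

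The more serious problem is that your proposal breaks off mid-sentence exactly where the substance of the theorem begins, so the core of the proof is simply missing. You never rule out $k\ge 2$, never determine the isomorphism types of the simple factors, never bound $\q{G}/\Cen{\q{G}}{N_i}$, and never address the derived-length claim. Ruling out $k\ge 2$ is genuinely delicate and requires two different mechanisms. Kov\'acs' product-type maximal subgroups have index $[T:H]^{k}$, where $H=M\cap T$ for a faithful maximal subgroup $M/\Cen{G}{T}$ of the almost simple group $\Norm{G}{T}/\Cen{G}{T}$; by Guralnick's theorem such an $M$ of \emph{even} index exists whenever $T\not\cong\mr{A}_7$, which forces $k=1$ in those cases. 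But every maximal subgroup of $\mr{A}_7$ has odd index, so product type yields nothing for $T\cong\mr{A}_7$; there one must first observe that $\Norm{G}{T}/\Cen{G}{T}\cong\mr{A}_7$ (else $\mr{S}_7$ already has a faithful maximal subgroup of index divisible by $4$), so $G$ is a wreath product, and then use a diagonal-type maximal subgroup (via Aschbacher--Scott and Th\'evenaz) whose index is divisible by $|T|$ and hence by $4$. Your one-line allusion to ``diagonal- and product-type subgroups'' does not distinguish these cases or explain why $4$ divides the relevant index. Finally, even granting $k=1$, you still need Theorem~\ref{thm:almost-simple-faithful} to pin down the possible socles and the constraints on $\q{G}/\Cen{\q{G}}{N_i}$, an argument that $\q{G}'$ equals $N_1\times\cdots\times N_r$, and the verification that the Sylow $2$-subgroups have derived length at most $2$. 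As written, the proposal establishes only the preliminary reductions.
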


The statement of Theorem \ref{thm3:almost-odd} is in some sense the best possible: As the example $\mr{S}_6 \times \mr{S}_6$ shows, $\q{G}'$ can be a direct product of minimal normal simple groups, and the minimal normal subgroups need not be distinct. However, if two minimal normal subgroups are isomorphic, then there must be an extension by automorphisms, as a diagonal subgroup of a product of two isomorphic nonabelian simple groups is maximal with index divisible by $4$.  Furthermore, $\q{G}$ need not split as a direct product; there exists a counterexample of the form $(\mr{A}_5 \times \mr{A}_6).2$, where the outer automorphism acts ``diagonally'' on the socle.

Theorems \ref{thm2:solvable} and \ref{thm3:almost-odd} yield a $2$-nilpotency criterion for general finite groups. 

%1.4
\begin{theorem}\label{cor4:2nilpotent-1}
Let $G$ be a finite group. If every maximal subgroup of $G$ has index not divisible by $3$ and not divisible by $4$, then $G$ has a normal $2$-complement.
\end{theorem}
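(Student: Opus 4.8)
The plan is to show that the hypothesis forces $G$ to be solvable, after which Theorem~\ref{thm2:solvable} finishes the job. If $|G|$ is odd, then $G$ is its own normal $2$-complement. If $2\mid|G|$ and $G$ is solvable, then $2$ is the smallest prime dividing $|G|$, $G$ is $2$-solvable, and no maximal subgroup of $G$ has index divisible by $2^2=4$; hence Theorem~\ref{thm2:solvable} with $p=2$ provides a normal $2$-complement. So it remains to prove: \emph{a finite group in which no maximal subgroup has index divisible by $3$ or by $4$ is solvable}.

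For this I would argue by contradiction. Suppose $G$ is a nonsolvable counterexample; note that having no maximal subgroup of index divisible by $3$ or by $4$ is inherited by every quotient, since the preimage in $G$ of a maximal subgroup of a quotient $G/N$ is a maximal subgroup of $G$ of the same index. As $G$ is nonsolvable and no maximal subgroup of $G$ has index divisible by $4$, Theorem~\ref{thm3:almost-odd} applies: writing $\q{G}=G/\mbf{O}_{2',2}(G)$, which is nonsolvable because $\mbf{O}_{2',2}(G)$ is solvable, we get $\q{G}'\cong N_1\times\dots\times N_r$ with $r\ge 1$, where $N_1$ is a simple minimal normal subgroup of $\q{G}$ isomorphic to one of $\mr{A}_6$, $\mr{A}_7$, $\psu{3}{5}$, $\psl{2}{q}$ with $q\equiv 5\,(8)$, or $\mr{G}_2(q)$ with $q=5^{2n+1}$. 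Put $A=\q{G}/\Cen{\q{G}}{N_1}$. Since $N_1$ is nonabelian simple, $N_1\cap\Cen{\q{G}}{N_1}=1$, so $A$ is almost simple with socle isomorphic to $N_1$, and $A$ is a quotient of $G$, so no maximal subgroup of $A$ has index divisible by $3$ or by $4$. By Theorem~\ref{thm1:almost-simple}, $A$ is then isomorphic to $\mr{A}_6$, $\mr{S}_6$, $\mr{A}_7$, $\psu{3}{5}$, or $\psu{3}{5}.2$, or is an almost simple group with socle $\psl{2}{q}$ $(q\equiv 5\,(8))$ or $\mr{G}_2(q)$ $(q=5^{2n+1})$.

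To reach a contradiction I would then show that, in each case, $A$ has a maximal subgroup of index divisible by $3$. If $A\in\{\mr{A}_6,\mr{S}_6\}$, the point stabilizer of the natural action on $6$ points is maximal of index $6$. If $A=\mr{A}_7$, then $\psl{2}{7}$ is a maximal subgroup of index $15$. If $A$ has socle $\psu{3}{5}$, the stabilizer in $A$ of an isotropic point of the Hermitian geometry is maximal of index $5^3+1=126$. If $A$ has socle $\psl{2}{q}$ with $q\equiv 5\,(8)$, then $q$ is not a power of $3$ (powers of $3$ are congruent to $1$ or $3$ modulo $8$), so $3\nmid q$, whence $3\mid q-1$ or $3\mid q+1$; in the second case the stabilizer in $A$ of a point of the projective line is maximal of index $q+1$, and in the first case $q\ge 13$ and the normalizer in $A$ of a non-split torus of the socle is maximal, with index divisible by $q(q-1)/2$ and hence by $3$. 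If $A$ has socle $\mr{G}_2(q)$ with $q=5^{2n+1}$, then $q\equiv 2\,(3)$, and a maximal parabolic subgroup of the socle extends to a maximal subgroup of $A$ of index $1+q+q^2+q^3+q^4+q^5\equiv 1+2+1+2+1+2\equiv 0\,(3)$. In every case some maximal subgroup of $A$ has index divisible by $3$, a contradiction — so $G$ is solvable, as required.

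The main obstacle is this last paragraph, where the two infinite families must be handled uniformly: beyond locating a subgroup of index divisible by $3$ in the simple group, one must confirm that this subgroup (or its normalizer) remains \emph{maximal} on passing to an arbitrary almost simple overgroup, while retaining a factor of $3$ in its index. The most delicate case is $\psl{2}{q}$ with $q\equiv 1\,(3)$, where the degree-$(q+1)$ action no longer suffices and one must instead use the normalizer of a non-split torus, invoking the (case-sensitive) classification of the maximal subgroups of $\psl{2}{q}$ and of its almost simple extensions.
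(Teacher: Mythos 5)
Your proposal is correct and follows essentially the same route as the paper: reduce the solvable case to Theorem~\ref{thm2:solvable}, invoke Theorem~\ref{thm3:almost-odd} to pass to an almost simple quotient, and then exhibit in each candidate a maximal subgroup of index divisible by $3$ (the paper uses GAP for the small groups and the dihedral subgroups $D_{q\pm1}$ for $\psl{2}{q}$, where you use the Borel subgroup when $3\mid q+1$). The maximality-in-overgroups issue you flag at the end is exactly what the paper's Lemma~\ref{l:c=1} (the condition $c=1=\pi$) is designed to dispatch, so it is a genuine but already-handled point rather than a gap.
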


Recall that a subgroup $H$ of a finite group $G$ is called second maximal (sometimes also weakly second maximal) if $H$ is a maximal subgroup of a maximal subgroup of~$G$. Our last result is a recognition theorem for $p$-nilpotency in terms of the indices of second maximal subgroups.

%1.5
\begin{theorem}\label{cor5:2nilpotent-2}
Let $G$ be a finite group, and let $p$ be the smallest prime dividing the order of $G$. Then $G$ has a normal $p$-complement if and only if for every second maximal subgroup $H$ of $G$, the index $[G:H]$ is a power of $p$ or is not divisible by $p^2$.
\end{theorem}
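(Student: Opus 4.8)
The plan is to prove the two implications separately; the forward direction is purely elementary, and the real work is in the converse, which I would reduce to Theorems~\ref{thm2:solvable} and~\ref{thm3:almost-odd}. For the forward direction, suppose $G$ has a normal $p$-complement $K$. Two observations drive the argument: (i) every subgroup $L\le G$ inherits the normal $p$-complement $L\cap K$, because $L\cap K\le K$ is a $p'$-group while $L/(L\cap K)\cong LK/K$ is a $p$-group; and (ii) if a finite group $L$ has normal $p$-complement $K_L$ and $U$ is maximal in $L$, then either $K_L\le U$---whence $[L:U]=p$, since $U/K_L$ is maximal in the $p$-group $L/K_L$---or $UK_L=L$, whence $[L:U]$ divides $|K_L|$ and is prime to $p$. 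Given a chain $H<M<G$ with $M$ maximal in $G$ and $H$ maximal in $M$, apply (ii) to $G$: either $p\nmid[G:M]$, in which case (ii) applied to $M$ (which has a normal $p$-complement by (i)) gives $[M:H]$, hence $[G:H]$, not divisible by $p^2$; or $[G:M]=p$ and $K\le M$, so $M$ has normal $p$-complement $K$ and (ii) applied to $M$ gives either $[M:H]=p$, so $[G:H]=p^2$ is a power of $p$, or $p\nmid[M:H]$, so $[G:H]$ is not divisible by $p^2$.

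For the converse, assume every second maximal subgroup of $G$ has index a power of $p$ or not divisible by $p^2$. First I would propagate the hypothesis to maximal subgroups: if $M$ is maximal in $G$ with $M\ne 1$, then for any maximal subgroup $H$ of $M$ we have $[G:M]\mid[G:H]$, and the property ``power of $p$, or prime to $p^2$'' passes to divisors (and $|G|$ is prime if $M=1$). Next I would dispose of a degenerate case: if some maximal $M$ has $[G:M]$ divisible by $p^2$, then by the previous step $[G:M]$ is a power of $p$, so for every maximal $H$ of $M$ the index $[G:H]$ is divisible by $p^2$, hence a power of $p$, hence $[M:H]$ is a power of $p$. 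Thus every maximal subgroup of $M$ has index a power of $p$ greater than $1$; since a Sylow $p$-subgroup of $M$ has index prime to $p$ it lies in no such subgroup, so $M$ is a $p$-group and $|G|=|M|\,[G:M]$ is a $p$-power, making the conclusion immediate.

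We may therefore assume that every maximal subgroup of $G$ has index not divisible by $p^2$. If $p$ is odd, then $|G|$ is odd, so $G$ is solvable and hence $p$-solvable, and Theorem~\ref{thm2:solvable} gives a normal $p$-complement; the same follows from Theorem~\ref{thm2:solvable} when $p=2$ and $G$ is solvable. The remaining case, $p=2$ with $G$ nonsolvable, is the heart of the proof, and the plan is to rule it out. Set $\q G=G/\mbf{O}_{2',2}(G)$. By Theorem~\ref{thm3:almost-odd}, $\q G'=N_1\times\dots\times N_r$ with $r\ge 1$, and $A:=\q G/\Cen{\q G}{N_1}$ is an almost simple group with socle $N_1$ drawn from the list there. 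Since taking preimages under $\q G\to A$ and under $G\to\q G$ preserves maximality and index, any second maximal subgroup of $A$ lifts to a second maximal subgroup of $G$ of the same index; so it suffices to exhibit, in every almost simple group $A$ that can occur, a second maximal subgroup of index divisible by $4$ but not a power of $2$, which contradicts the hypothesis.

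For the two infinite families I would use a Borel-type subgroup. When $\psl{2}{q}\le A\le\mathrm{P\Gamma L}_2(q)$ with $q\equiv 5\pmod{8}$, take $B$ to be the stabilizer in $A$ of a point of the projective line: then $B$ is maximal in $A$, the $2$-part of $[A:B]=q+1$ is exactly $2$, and $B$ has even abelianization, hence a maximal subgroup of index $2$, giving a second maximal subgroup of index $2(q+1)$---divisible by $4$ and, since $q+1\equiv 6\pmod{8}$, not a power of $2$. When $A$ has socle $\mr{G}_2(q)$ with $q=5^{2n+1}$, the same works with $B$ a maximal parabolic having Levi factor $\mathrm{GL}_2(q)$: the $2$-part of $[A:B]=(q^6-1)/(q-1)$ is again $2$, and the determinant $\mathrm{GL}_2(q)\to C_{q-1}$ supplies the index-$2$ subgroup of $B$. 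For the five remaining groups one checks directly, e.g.\ $7{:}3<\psl{2}{7}<\mr{A}_7$ has index $120$, $3^2{:}2<3^2{:}4<\mr{A}_6$ has index $20$, $\mr{A}_5<\mr{S}_5<\mr{S}_6$ has index $12$, and $5^{1+2}{:}4<5^{1+2}{:}8<\psu{3}{5}$ has index $252$, with the analogous chain in $\psu{3}{5}.2$. I expect the main obstacle to be exactly this last step---checking that \emph{every} almost simple group permitted by Theorem~\ref{thm3:almost-odd}, including the automorphic extensions arising in the two infinite families, contains such a chain---so most of the effort goes into setting up the Borel/parabolic argument to handle the infinite families uniformly.
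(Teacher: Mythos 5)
Your proposal is correct and follows essentially the same route as the paper: the same elementary forward direction, the same reduction of the converse (pushing the hypothesis down to maximal subgroups, disposing of the $p$-group case, then invoking Theorems~\ref{thm2:solvable} and~\ref{thm3:almost-odd} and passing to an almost simple quotient to exhibit a forbidden second maximal subgroup). The only difference is in the explicit witnesses — you use Borel/parabolic chains and hand-picked chains in the five small groups, whereas the paper uses the torus normalizer $D_{q+1}$ in $\psl{2}{q}$, the chain $\su{3}{q}<\su{3}{q}{:}2$ in $\mr{G}_2(q)$, and GAP for the small cases — and both sets of witnesses check out.
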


Note that one direction of Theorem~\ref{cor5:2nilpotent-2} does not hold if $p$ is not the smallest prime that divides $|G|$. For example, the order of $\mr{A}_5$ is not divisible by $3^2$, so no second maximal subgroup has index divisible by $3^2$. But $\mr{A}_5$ is simple, so it does not contain a normal $3$-complement. The other direction does not require $p$ to be the smallest prime dividing~$|G|$, as we will see.

Our paper is organized as follows. In Section \ref{sec:almost-simple}, we consider almost simple groups whose maximal subgroups have almost odd index, and we prove Theorem \ref{thm1:almost-simple}. In Section \ref{sec:finite-groups}, we consider general finite groups and prove Theorem \ref{thm3:almost-odd}. Finally, we turn to $p$-nilpotency criteria in Section \ref{sec:solvable}, proving Theorems \ref{thm2:solvable}, \ref{cor4:2nilpotent-1} and~\ref{cor5:2nilpotent-2}. For the convenience of the reader, we have included the GAP \cite{GAP4} code for our simple computer calculations in Section \ref{sec:gap}.

%%%%%%%%%%%%%%%%%%%%%%%%%%%%%%%%%%%%%%%
%%%%%%%%%%% Section 2 %%%%%%%%%%%%%%%%%
%%%%%%%%%%%%%%%%%%%%%%%%%%%%%%%%%%%%%%%
\section{Almost simple groups}\label{sec:almost-simple}
In this section, we classify those almost simple groups whose maximal subgroups have almost odd index. A maximal subgroup of an almost simple group that does not contain the socle is said to be faithful, as such a subgroup is corefree and so the permutation representation on the cosets of the maximal subgroup is faithful. The main theorem of this section shows that an almost simple group has a faithful maximal subgroup of index divisible by $4$ if and only if it is not on the list in Theorem~\ref{thm1:almost-simple}, and then Theorem~\ref{thm1:almost-simple} follows. We let $q=p^f$ for some prime $p$ and some integer $f \ge 1$.

\begin{theorem}\label{thm:almost-simple-faithful} %2.1
Let $G$ be a finite almost simple group with simple socle $S$. Then $G$ has a faithful maximal subgroup of index divisible by $4$ if and only if $S \not\cong \psl{2}{q}$ with $q \equiv 5 \, (8)$, $S \not\cong \mr{G}_2(q)$ with $q=5^{2n+1}$ $(n \ge 0$, $n \neq 1)$ and $G$ is not isomorphic to $\mr{A}_6$, $\mr{S}_6$, $\mr{A}_7$, $\psu{3}{5}$ or $\psu{3}{5}.2$.
\end{theorem}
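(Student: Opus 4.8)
The plan is to reduce the statement to a finite computation plus a family-by-family analysis using the classification of finite simple groups. The key observation is elementary: for a finite group $G$ and a subgroup $H$, the index $[G:H]$ is not divisible by $4$ if and only if a Sylow $2$-subgroup of $H$ has index $1$ or $2$ in a Sylow $2$-subgroup of $G$. So $G$ has \emph{no} faithful maximal subgroup of index divisible by $4$ precisely when every corefree maximal subgroup $H$ of $G$ contains a subgroup of index at most $2$ of a Sylow $2$-subgroup $P$ of $G$; equivalently, $H$ contains an index-$\le 2$ subgroup of $P$ up to conjugacy. I would first record this reformulation as a lemma, together with the standard fact that if $G$ is almost simple with socle $S$ and $M$ is a faithful maximal subgroup of $G$, then $M \cap S$ is (essentially) a maximal subgroup of $S$, or $M$ is a novelty; in either case $[G:M]$ and $[S:M\cap S]$ have the same $2$-part up to a factor of $|G/S|_2$, which is small.

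Next I would run through the families of simple groups $S$ according to the CFSG. For the alternating and symmetric groups, one checks $\mr{A}_n$ and $\mr{S}_n$ directly: the maximal subgroups are the intransitive ones $(\mr{S}_k \times \mr{S}_{n-k}) \cap \mr{A}_n$, the imprimitive ones $(\mr{S}_a \wr \mr{S}_b) \cap \mr{A}_n$, and the primitive ones; a short analysis of $2$-adic valuations of the indices $\binom{n}{k}$, $\frac{n!}{(a!)^b b!}$, etc., pins down the only survivors as $\mr{A}_6$, $\mr{S}_6$, $\mr{A}_7$ (the $\mr{A}_7$ case already appearing in Guralnick's theorem for odd index). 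For the sporadic groups, their maximal subgroups are all listed (in the ATLAS or subsequent work), so this is a finite check that can be done by hand or by the GAP code advertised in Section~\ref{sec:gap}; none survive. The bulk of the work is the groups of Lie type. Here I would treat the parabolic subgroups and the non-parabolic (subfield, classical, exotic, torus normalizer, etc.) maximal subgroups separately. For parabolics $P_J$ of a group of Lie type over $\mathbb{F}_q$, the index $[G:P_J]$ is a polynomial in $q$ whose $2$-part I would estimate: when $p$ is odd, these indices are typically divisible by high powers of $2$ except in very small rank, which is what isolates $\psl{2}{q}$ (the unique parabolic is a Borel of index $q+1$, and $q+1 \equiv 2 \pmod 4$ exactly when $q \equiv 1 \pmod 4$; combined with the condition forced by the other maximal subgroups this narrows to $q \equiv 5 \pmod 8$) and $\mr{G}_2(q)$ with $q = 5^{2n+1}$ (where one must also check the maximal subgroups $\mr{SL}_3(q).2$, $\mr{SU}_3(q).2$, $\mr{SL}_2(q) \times \mr{SL}_2(q)$, $2^3 \cdot \mr{L}_3(2)$, $\mr{G}_2(q_0)$, etc., and the reason $n=1$ is excluded is that $\mr{G}_2(5)$ has an extra maximal subgroup making some index divisible by $4$). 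The unitary case $\psu{3}{5}$ emerges similarly from its small maximal subgroup list. When $p=2$ the Sylow $2$-subgroup is huge and essentially every proper subgroup has index divisible by $4$ unless the group is tiny, which is handled case by case.

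The main obstacle, and where the real effort lies, is the uniform treatment of the groups of Lie type: one must survey \emph{all} maximal subgroups — using Aschbacher's theorem and the Liebeck--Seitz refinements for the classical groups, and Liebeck--Seitz again for the exceptional groups — and show that for all but the stated families and small exceptions, at least one maximal subgroup has index with $2$-part at least $4$. The delicate sub-point is not the "generic" case (large rank or large field, where indices are manifestly highly $2$-divisible) but the boundary cases: the small-rank classical groups $\psl{2}{q}$, $\psl{3}{q}$, $\psu{3}{q}$, $\sp{4}{q}$, $\psl{4}{q}$, $\psu{4}{q}$, $\mr{PSp}_4(q)$, and small exceptional groups, where several indices are small linear or quadratic polynomials in $q$ and one needs precise congruence conditions on $q$ modulo a power of $2$. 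I would organize this by first using the known Sylow $2$-subgroup orders of the simple groups of Lie type (from the order formulas) to bound which maximal subgroups could possibly have almost odd index, reducing to a short explicit list per family, and then checking those by hand; the $\psl{2}{q}$ analysis (maximal subgroups: Borel, dihedral groups $D_{q\pm 1}$, $\mr{A}_4$, $\mr{S}_4$, $\mr{A}_5$, and subfield subgroups) is the template, and the claim that $q \equiv 5 \pmod 8$ (not merely $q \equiv 1 \pmod 4$) is forced should be verified carefully, since it is the source of one of the two infinite families and is needed verbatim in Theorem~\ref{thm3:almost-odd}. Finally, I would assemble the almost-simple extensions: for each surviving simple socle $S$, determine which overgroups $S \le G \le \mr{Aut}(S)$ still have all faithful maximal subgroups of almost odd index, using that passing to $G$ can only introduce novelty maximal subgroups or multiply indices by a $2$-power dividing $|G/S|_2$; this yields the precise list $\mr{A}_6, \mr{S}_6, \mr{A}_7, \psu{3}{5}, \psu{3}{5}.2$ together with all extensions of $\psl{2}{q}$ ($q\equiv 5 \pmod 8$) and $\mr{G}_2(q)$ ($q = 5^{2n+1}$), completing the "if and only if".
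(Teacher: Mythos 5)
Your overall strategy coincides with the paper's: invoke the CFSG, work family by family, exhibit for each ``generic'' simple socle a maximal subgroup whose index has $2$-part at least $4$, and do a complete check of all maximal subgroups for the exceptional families and the finitely many small groups. The reformulation via Sylow $2$-subgroups is correct but buys nothing; the paper simply computes $2$-parts of indices directly from the order formulas.

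The genuine gap is in the passage from the socle $S$ to an arbitrary almost simple overgroup $G$. To prove the ``only if'' direction you must produce a \emph{faithful maximal subgroup of $G$} of index divisible by $4$, and exhibiting a maximal subgroup $H_S$ of $S$ of such index does not do this by itself: if $G$ fuses distinct $S$-classes of conjugates of $H_S$, then $\Norm{G}{H_S}$ need not be maximal in $G$, and $H_S$ may fail to lie under any ordinary maximal subgroup of $G$ at all. Your claim that ``passing to $G$ can only introduce novelty maximal subgroups or multiply indices by a $2$-power dividing $|G/S|_2$'' is therefore wrong in the direction you need it (and note that a corefree maximal subgroup $M$ satisfies $[G:M]=[S:M\cap S]$ exactly, not merely up to a $2$-power). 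The paper isolates precisely the hypothesis that repairs this: Lemma~\ref{l:c=1} shows that if $S$ is transitive on the full $\mr{Aut}(S)$-class of $H_S$ (the condition $c=1=\pi$), then $\Norm{G}{H_S}$ is a faithful maximal subgroup of \emph{every} such $G$ with $[G:\Norm{G}{H_S}]=[S:H_S]$; the case analysis then consists of choosing, in each family, a witness subgroup for which this stabilizer condition is recorded in the tables of Bray--Holt--Roney-Dougal or Kleidman--Liebeck. Without some such lemma, your reduction to subgroups of $S$ is incomplete. Separately, your explanation of the $\mr{G}_2$ exclusion is off: the case removed is $n=1$, i.e.\ $q=5^3$, because $\mr{G}_2(5^3)$ has maximal subgroups isomorphic to $\psl{2}{8}$, whose index has $2$-part $2^3$; the group $\mr{G}_2(5)$ itself is on the good list, its extra maximal subgroups $2^3\cdot\psl{3}{2}$ and $\psu{3}{3}{:}2$ having odd index.
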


We prove Theorem \ref{thm:almost-simple-faithful} using the classification of finite simple groups. Before we begin, we prove a lemma describing a situation in which a maximal subgroup of a simple group gives rise to a faithful maximal subgroup of an almost simple group with the same index. This lemma simplifies our proofs, and it will be used repeatedly for the finite simple groups of Lie type.

Let $G$ be an almost simple group with simple socle $S$, so $S \le G \le \mr{Aut}(S)$, and let $A = \mr{Aut}(S)$. Let $H_S$ be a maximal subgroup of $S$. Let $\Delta$ be the set of $A$-conjugates of $H_S$. Since $S \nrm A$, these conjugates lie in $S$ and the $S$-orbits form a system of blocks for the $A$-action. Letting $c$ be the number of blocks, the permutation action of $A$ on these blocks yields a homomorphism $\pi:A \rightarrow \mr{S}_c$ that contains $S$ in the kernel. If $c=1$ (in which case we also say $\pi=1$), this means that $S$ acts transitively on $\Delta$ by conjugation, and so $[S:\Norm{S}{H_S}] = |\Delta| $. Since $H_S$ is a maximal subgroup of the simple group $S$, we have $\Norm{S}{H_S}=H_S$, and so 
\begin{align}\label{eq:c=1}
    [S:H_S]=[G:\Norm{G}{H_S}]
\end{align}
since $S$ acting transitively on $\Delta$ means $G$ certainly does as well. 

Using the notation of the last paragraph, we can now state the lemma.

\begin{lemma}\label{l:c=1}
If $c=1=\pi$, then $H_G=\Norm{G}{H_S}$ is a faithful maximal subgroup of $G$ and we have \mbox{$[G:H_G]=[S:H_S]$}.
\end{lemma}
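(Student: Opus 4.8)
The plan is to establish, in order, the index formula, faithfulness, and then maximality, with essentially all of the (short) content going into the last of these. Write $A = \mr{Aut}(S)$, so that $S \nrm G \le A$, and recall that since $H_S$ is maximal in the nonabelian simple group $S$ it is self-normalizing: $\Norm{S}{H_S} = H_S$.

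For the index I would simply invoke \eqref{eq:c=1}: the hypothesis $c=1$ says that $S$ acts transitively on $\Delta$ by conjugation, hence so does $G \ge S$; the $G$-stabilizer of the point $H_S \in \Delta$ is $\Norm{G}{H_S} = H_G$ while its $S$-stabilizer is $\Norm{S}{H_S} = H_S$, so the orbit--stabilizer theorem gives $[G:H_G] = |\Delta| = [S:H_S]$. Faithfulness is then immediate: since $S \nrm G$ we have $H_G \cap S = \Norm{G}{H_S} \cap S = \Norm{S}{H_S} = H_S \ne S$, so $S \not\le H_G$; in particular $H_G$ is a proper subgroup of $G$, and not containing the socle is precisely what it means for a maximal subgroup of $G$ to be faithful.

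For maximality, the key step I would isolate is the identity $G = S H_G$. This drops out of the index formula together with the second isomorphism theorem: $[SH_G : H_G] = [S : S \cap H_G] = [S : H_S] = [G : H_G]$, which forces $SH_G = G$. (Equivalently one can read this off directly from $c=1$ via the Frattini argument, since $S$ is transitive on the set of $G$-conjugates of $H_S$.) With this in hand, take any subgroup $M$ with $H_G \le M \le G$ and examine $M \cap S$: it contains $H_G \cap S = H_S$, so by maximality of $H_S$ in $S$ it is either all of $S$ or just $H_S$. In the first case $S \le M$, whence $M \supseteq S H_G = G$. In the second case, any $m \in M$ satisfies $H_S^m \le M$ (as $H_S \le H_G \le M$) and $H_S^m \le S$ (as $H_S \le S \nrm G$), so $H_S^m \le M \cap S = H_S$ and hence $H_S^m = H_S$, i.e. $m \in \Norm{G}{H_S} = H_G$; thus $M = H_G$. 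Therefore $H_G$ is a maximal subgroup of $G$.

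I do not expect a genuine obstacle: the lemma is short, and the only step that requires a moment's thought is deducing $G = SH_G$, after which everything is bookkeeping with $S \nrm G$ and the maximality of $H_S$. If one preferred, the maximality argument could instead be phrased in the language of permutation groups --- $G$ acts transitively on $\Delta$ by conjugation with point stabilizer $H_G$, the normal subgroup $S$ already acts primitively there because its action is permutation-isomorphic to the action of $S$ on $S/H_S$, and a transitive group containing a transitive primitive normal subgroup is itself primitive, so the point stabilizer $H_G$ is maximal in $G$ --- but the direct argument above seems cleanest.
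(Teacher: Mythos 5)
Your proof is correct and rests on the same two pillars as the paper's: the identity $G = S\,\Norm{G}{H_S}$ extracted from $c=1$ (the paper gets this by computing $|S\,\Norm{G}{H_S}|=|G|$, you by the second isomorphism theorem), and the maximality of $H_S$ in $S$ applied to an intermediate subgroup (the paper phrases this via $\gen{H_S^{H_G}}$ for a maximal overgroup of the normalizer, you via the dichotomy for $M\cap S$). The differences are purely presentational, so this matches the paper's argument.
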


\begin{proof}
Let $H_G$ be a maximal subgroup of $G$ containing $\Norm{G}{H_S}$. 

We first show that $c=1$ implies that $S \not\le H_G$, so $H_G$ is a faithful subgroup of $G$. Since $S \nrm G$, we have that $S \Norm{G}{H_{S}} \le G$, and in fact
\begin{align*}
    |S \Norm{G}{ H_{S} }| = \frac{|S| |\Norm{ G }{ H_{S} }| }{|\Norm{S}{H_{S}}|} = [S:H_{S}] | \Norm{ G}{ H_{S} } | = |G|,
\end{align*}
where the last equality follows from Equation (\ref{eq:c=1}) above since $c=1$. Thus, the proper subgroup $H_G < G$ cannot contain both $\Norm{ G }{ H_{S} }$ and $S$.

Next, we show that $H_G = \Norm{G}{H_{S}}$. Assume for contradiction that $H_G$ properly contains the normalizer. Then since $S \nrm G$, we have $H_{S} < \gen{H_{S}^{H_G}} \le S$, so we must have equality since $H_{S}$ is maximal in $S$. But then $S = \gen{H_{S}^{H_G}} \le H_G$ since $H_{S} \le H_G$, a contradiction. 

Finally, $[G:H_G]=[S:H_{S}]$ follows immediately from $H_G=\Norm{G}{H_S}$ and Equation~(\ref{eq:c=1}).
\end{proof}

Now we proceed with the proof of Theorem \ref{thm:almost-simple-faithful} in a series of propositions. In short, we will identify a maximal subgroup of index divisible by $4$ in a simple group with $c=1=\pi$ and use Lemma \ref{l:c=1} to conclude for almost simple groups. We begin with the finite simple groups of Lie type. Our reference for the order formulas of the finite groups of Lie type is \cite[Table 24.1 and Table 24.2]{MT11}. %In our first proposition, we use the notation $\mr{PSL}_n^\epsilon(q)$, where $\epsilon=+1$ denotes $\psl{n}{q}$ and $\epsilon=-1$ denotes $\psu{n}{q}$. 
If $n$ is an integer and $p$ a prime, we denote by $n_p$ the largest power of $p$ that divides~$n$. 

\begin{proposition}\label{p:psln} %PSL(n,q) %2.3
Let $G$ be an almost simple group with socle $\mr{PSL}_n(q)$ and $n \ge 3$. Then $G$ has a faithful maximal subgroup of index divisible by $4$.
\end{proposition}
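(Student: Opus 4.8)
The plan is to exhibit, in the simple group $S = \psl{n}{q}$ with $n \ge 3$, a maximal subgroup $H_S$ whose index is divisible by $4$ and for which the invariant $c$ defined before Lemma~\ref{l:c=1} equals $1$; then Lemma~\ref{l:c=1} produces a faithful maximal subgroup $H_G = \Norm{G}{H_S}$ of $G$ with $[G:H_G] = [S:H_S]$, and we are done. The natural candidate for $H_S$ is a maximal parabolic subgroup, specifically the stabilizer $P_1$ of a point (or, dually, a hyperplane) in the natural projective space $\mathrm{PG}(n-1,q)$. This has index $[S:P_1] = (q^n-1)/(q-1) = 1 + q + \dots + q^{n-1}$, which is independent of the choice of $S \le G \le \mr{Aut}(S)$ and whose $2$-part I would compute directly.

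The first main step is to verify that $4 \mid (q^n-1)/(q-1)$ for a suitable parabolic — splitting into cases on the parity of $q$ and on $n \bmod 4$. When $q$ is even, $(q^n-1)/(q-1)$ is odd, so the point stabilizer is the wrong choice; instead I would take the stabilizer $P_k$ of a $k$-subspace, whose index is the Gaussian binomial coefficient $\binom{n}{k}_q$, and show that for some $k$ this is divisible by $4$ (for $q$ even, $\binom{n}{k}_q \equiv \binom{n}{k} \pmod 2$ roughly speaking, and one picks $k$ so that $\binom{n}{k}$ is even; since $n \ge 3$ one can always do this, e.g. via Kummer's theorem on the binary expansion of $n$ — the case $n = 2^m$ needs a little care but a non-point parabolic still works). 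When $q$ is odd, $q \equiv 1$ or $3 \pmod 4$: if $q \equiv 1 \pmod 4$ then $q-1$ contributes a factor of $4$ but $q^n - 1$ contributes at least the same, and one checks $(q^n-1)/(q-1) \equiv n \pmod 4$ when $q\equiv 1$, so $4 \mid (q^n-1)/(q-1)$ as soon as $4 \mid n$; for other residues of $n$ one passes to $P_2$ (index $(q^n-1)(q^{n-1}-1)/((q^2-1)(q-1))$) or combines small-subspace stabilizers. The upshot is a short case analysis showing that for every $n \ge 3$ and every $q$ there is a maximal parabolic $P_k$ of $S$ with $4 \mid [S:P_k]$.

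The second main step is to check $c = 1 = \pi$ for the chosen parabolic. Parabolic subgroups are well understood under the automorphism group: the diagonal and field automorphisms of $\psl{n}{q}$ normalize a standard parabolic $P_k$ (they act on $S$ preserving the flag variety), so they fix the $S$-conjugacy class of $P_k$; the only automorphism that can move the class of $P_k$ is the graph (inverse-transpose) automorphism, which sends the class of $P_k$ to the class of $P_{n-k}$. Hence if I choose $k$ with $k \ne n-k$, i.e. $n \ne 2k$, then the graph automorphism also fixes the $S$-class of $P_k$, so $\Delta$ is a single $S$-orbit, $c = 1$, and $\pi = 1$. I would therefore arrange in Step~1 to pick $k \ne n/2$; this is always possible since there are $n-1 \ge 2$ choices of $k \in \{1,\dots,n-1\}$ and at most one of them equals $n/2$. (If it happens that the only "good" $k$ from the divisibility count were $n/2$, one notes $P_k$ and $P_{n-k}=P_k$ are then genuinely $\mr{Aut}(S)$-invariant anyway, so $c=1$ still holds — but arranging $k \ne n/2$ keeps the argument uniform.)

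The step I expect to be the main obstacle is the divisibility bookkeeping in Step~1 — in particular handling $q$ even together with $n$ a power of $2$, where the point stabilizer has odd index and one must argue carefully with $2$-adic valuations of Gaussian binomials (equivalently, with $\nu_2$ of products $\prod (q^i - 1)$, which ties the count to the multiplicative order of $q$ mod powers of $2$). A clean way to sidestep most of this is to recall that for $n \ge 3$ the stabilizer of a line (the $P_2$ parabolic, or $P_{n-1}$ for the hyperplane–line flag... ) already tends to have even index, but pushing to divisibility by $4$ uniformly really does need the valuation computation. Everything else — maximality of parabolics in $S$, the behavior of $\mr{Aut}(\psl{n}{q})$ on parabolic classes, and the application of Lemma~\ref{l:c=1} — is standard and quick.
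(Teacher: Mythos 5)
There is a genuine gap, and it lies in your Step~2, not in the divisibility bookkeeping you were worried about. You claim that the graph (inverse--transpose) automorphism ``sends the class of $P_k$ to the class of $P_{n-k}$'' and then conclude that for $k \ne n-k$ the graph automorphism \emph{fixes} the $S$-class of $P_k$. This is exactly backwards: for $n \ge 3$ the stabilizers of $k$-spaces and of $(n-k)$-spaces form two \emph{distinct} $S$-classes when $k \ne n-k$, and duality interchanges them. So for $k \ne n/2$ you get $c=2$ and $\pi \ne 1$, and Lemma~\ref{l:c=1} does not apply; indeed, when $G$ contains a graph automorphism, $\Norm{G}{P_k}$ is \emph{not} maximal in $G$ (already for $n=3$, the maximal subgroups of $\mr{Aut}(\psl{3}{q})$ over the parabolics are the novel point--hyperplane flag and antiflag stabilizers, not the extensions of $P_1$). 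The only parabolic with $c=1$ is $P_{n/2}$ for $n$ even --- precisely the case your parenthetical remark treats as the degenerate one to be avoided --- and for $n$ odd there is no self-dual parabolic at all, so the approach cannot be repaired within the parabolic family. Your Step~1 computations (e.g.\ $(q^n-1)/(q-1) \equiv n \pmod 4$ for $q \equiv 1 \pmod 4$, and the oddness of $[S:P_1]$ for $q$ even) are fine as far as they go, but they are moot given that the subgroups you feed into Lemma~\ref{l:c=1} do not satisfy its hypothesis.

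For comparison, the paper avoids this entirely by working with Aschbacher class $\mathcal{C}_3$ field-extension subgroups of type $\mr{GL}_m(q^r)$ ($n=mr$, $r$ prime), which the tables in Bray--Holt--Roney-Dougal and Kleidman--Liebeck certify to be maximal with $c=1=\pi$ for all $n \ge 5$ (with separate treatment of $n=3,4$, where a $\mathcal{C}_8$ subgroup of type $\mr{SO}^-_4(q).2$ is used for $n=4$, $q \equiv 3 \ (4)$, and $\psl{3}{4}$ is handled by computer); the $2$-part of the index of these subgroups is then computed directly from the order formula. If you want to salvage a parabolic-style argument, you would need either to restrict to groups without graph automorphisms and handle the duality coset separately (e.g.\ via flag/antiflag stabilizers), or to switch to a family of subgroups whose $S$-class is genuinely $\mr{Aut}(S)$-invariant, as the paper does.
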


\begin{proof}
Let $G = \mr{PSL}_n(q)$, so ${|G|=\tfrac{1}{d}q^{n(n-1)/2}\prod_{k=2}^n (q^k-1)}$ with ${d=(q-1,n)}$. First assume that $n \ge 5$. By \cite{BHR13} for $5 \le n \le 12$ and \cite[Table 3.5.A]{KL90} for ${n \ge 13}$, a subgroup $H \le \mr{PSL}_n(q)$ in Aschbacher class $\mc{C}_3$ of type $\mr{GL}_m(q^r)$ ($n=mr$, $m \ge 1$, $r$~prime) is always maximal and satisfies $c=1=\pi$. By \cite[Proposition 4.3.6]{KL90}, such a subgroup $H \in \mc{C}_3$ has order
\begin{align*}
    |H|=abr|\mr{PSL}_m(q^r)|, \quad \text{where} \quad a=\frac{(q-1,m)(q^r-1)}{(q-1)(q-1,n)}, \quad b=\frac{(q^r-1,m)}{(q-1,m)},
\end{align*} and index
\begin{align*}
    [G:H] = \frac{q^{n(n-1)/2}}{q^{rm(m-1)/2} r} \frac{\prod\limits_{k=1}^{n} (q^k-1)}{\prod\limits_{k=1}^{m} (q^{rk}-1)}.
\end{align*}
We show this index is divisible by $4$. First assume that $q$ is a power of $2$. Then $[G:H]_2 = (1/r_2) q^{\frac12 n(n-m)} > q^n/2 \ge 4$ since $n \ge 5$ means $n-m > 2$. Now assume that $q$ is odd. %If $r=2$ then $[G:H]_2=\frac{1}{r} \prod\limits_{\substack{k=1 \\\text{odd}}}^n (q^k-1)_2 \ge \frac12 (q-1)_2(q^3-1)_2 \ge 4$. 
Every $r$-th factor of $q^k-1$ in the numerator cancels, and so we have
\begin{align*}
    [G:H]_2 = \frac{1}{r_2} \prod\limits_{j=0}^{m-1} \prod\limits_{k=jr+1}^{(j+1)r-1} (q^k - 1)_2 \ge \frac{1}{r_2} (2^{r-1})^m = \frac{1}{r_2} 2^{n-m} \ge 4
\end{align*}
since $n \ge 5$ means $n-m>2$. We have shown that $[G:H]$ is always divisible by $4$. Now, since $c=\pi=1$, Lemma~\ref{l:c=1} shows that such a maximal subgroup $H \le \mr{PSL}_n(q)$ gives rise to a faithful maximal subgroup of any almost simple group with socle $\mr{PSL}_n(q)$ with index divisible by $4$.

To finish the proof of the proposition, it remains to consider $n=3$ and ${n=4}$. 

If ${n=4}$, then $H \in \mc{C}_3$ %of type $\sl{2}{q^2}.(q+1).2$ 
has index ${[G:H]=\frac12 q^4(q-1)(q^3-1)}$ by our calculation above and satisfies $c=1=\pi$ by \cite[Table 8.8]{BHR13}. This index is divisible by $4$ for $q$ even and for $q \equiv 1 \, (4)$. If $q \equiv 3 \, (4)$, then $H \in \mc{C}_8$ of type $\mr{SO}^-_4(q).2$ has index ${[G:H]=\frac12 q^4(q^2-1)(q^3-1)}$ by \cite[Table~8.8]{BHR13}, which is divisible by $4$. Since $q \equiv 3\, (4)$, we have ${d=(q-1,4)=2}$, and so $c=d/2=1$ by \cite[Table 8.8]{BHR13}. This case is finished by Lemma \ref{l:c=1}.

If $n=3$ and $q \neq 4$, then $H \in \mc{C}_3$ has index $[G:H] = \frac13 q^3(q-1)(q^2-1)$ by our calculation above and satisfies $c=1=\pi$ by \cite[Table 8.3]{BHR13}. This index is divisible by~$4$. Finally, if $n=3$ and $q = 4$, then we can find faithful maximal subgroups of index divisible by $4$ by computing with all possible almost simple groups in GAP.
\end{proof}

\begin{proposition}\label{p:psu} %PSU(n,q) %2.4
Let $G$ be an almost simple group with socle $\mr{PSU}_n(q)$ and $n \ge 3$. Then $G$ has a faithful maximal subgroup of index divisible by $4$ unless $G$ is isomorphic to $\psu{3}{5}$ or $\psu{3}{5}.2$.
\end{proposition}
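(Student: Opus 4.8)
The plan is to follow the strategy of Proposition~\ref{p:psln}: for each almost simple group $G$ with socle $S=\psu{n}{q}$ we will exhibit a maximal subgroup $H_S$ of $S$ with $c=1=\pi$ and $[S:H_S]$ divisible by $4$, and conclude via Lemma~\ref{l:c=1}. The genuinely exceptional groups $\psu{3}{5}$ and $\psu{3}{5}.2$ — and the complementary fact that $\mr{PGU}_3(5)$ and $\psu{3}{5}.6$ are \emph{not} exceptional — will be settled by direct computation in GAP~\cite{GAP4}, as will the handful of remaining small socles where the generic choice of $H_S$ fails to be maximal.

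The decisive case is $n=3$. Here I will take for $H_S$ the $\mc{C}_3$-subgroup of $\psu{3}{q}$ of type $\mr{GU}_1(q^3)$, namely the normalizer $C_{(q^2-q+1)/d}\rtimes C_3$ of a Coxeter torus, where $d=(3,q+1)$. A short computation shows that its index equals $\tfrac{1}{3}q^3(q-1)(q+1)^2$ irrespective of $d$, and that the $2$-part of this number is always at least $8$ — for $q$ odd this follows from $8\mid q^2-1$, and for $q$ even from $q^3\ge 8$ (recall $\psu{3}{2}$ is not simple). By \cite{BHR13}, $H_S$ is maximal for every $q\ge 4$ with $q\ne 5$; when $d=1$ we also have $\pi=1$ and Lemma~\ref{l:c=1} applies, while when $d=3$ the $\mr{Aut}(S)$-class of $H_S$ splits into three $S$-classes fused by the diagonal automorphism, and one instead reads off from \cite[Tables 8.5 and 8.6]{BHR13} that each almost simple $G$ over $S$ still has a faithful maximal subgroup (the normalizer of a Singer-type cyclic torus) of index $\tfrac{1}{3}q^3(q-1)(q+1)^2$. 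For $q=3$ the subgroup $H_S$ lies inside $\psl{2}{7}$ and is not maximal, but here $q\equiv 3\pmod 4$ and one uses the point-stabilizer parabolic $P_1$, of index $q^3+1=28$, instead; for $q=5$ the subgroup $C_7\rtimes C_3$ lies inside one of the exotic $\mr{A}_7$'s and is not maximal, which is precisely why $\psu{3}{5}$ and $\psu{3}{5}.2$ are exceptions.

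For $n\ge 4$ having an odd prime divisor $r$, write $n=rm$ and take for $H_S$ the $\mc{C}_3$-subgroup of type $\mr{GU}_{m}(q^r)$. By \cite{BHR13} (for $n\le 12$) and \cite{KL90} (for $n\ge 13$) this is maximal with $c=1=\pi$, and its index has the same shape as in Proposition~\ref{p:psln}, the sign changes $(-1)^k$ in the cyclotomic factors being immaterial; the $2$-part estimate then carries over verbatim, since $r$ is odd (so $r_2=1$) and $n-m\ge 2$, giving $[S:H_S]_2\ge 8$. When $n=2^a$ with $a\ge 2$ no such $r$ exists: for $q$ odd I take the parabolic $P_1$, whose index $\tfrac{(q^n-1)(q^{n-1}+1)}{q^2-1}$ has $2$-part $n_2(q+1)_2/2\ge 2^a\ge 4$, and for $q$ even I take the imprimitive $\mc{C}_2$-subgroup of type $\mr{GU}_{n/2}(q)\wr\mr{S}_2$, whose index carries the factor $q^{n^2/4}\ge 16$; both are maximal with $c=1=\pi$ (for $q$ even, $d=(n,q+1)=1$, which makes this automatic), and the one small socle $\psu{4}{2}\cong\psp{4}{3}$ is treated in GAP.

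The main obstacle is to make the $n=3$ analysis precise, i.e.\ to show that the failure of $H_S$ to be maximal happens \emph{only} at $q=5$. This matters because all the other obvious geometric maximal subgroups of $\psu{3}{q}$ — the point stabilizers $P_1$ and $\mr{GU}_1(q)\times\mr{GU}_2(q)$, the imprimitive subgroup $\mr{GU}_1(q)\wr\mr{S}_3$, and the orthogonal-type subgroup $\mr{SO}_3(q)$ — have index with $2$-part at most $2$ once $q\equiv 5\pmod 8$, so one is forced to use the Coxeter-torus normalizer and, for $q=5$, to locate the exotic $\mr{A}_7$ subgroups correctly. The remaining work — verifying $c=1=\pi$ from the tables, dealing with the almost simple extensions when $d=3$, and the finite GAP checks — is routine.
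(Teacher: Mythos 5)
Your proposal is correct and, for the main case, follows essentially the same route as the paper: the workhorse is the $\mc{C}_3$ subgroup of type $\mr{GU}_m(q^r)$ together with Lemma~\ref{l:c=1}, with the small socles ($(n,q)=(3,3),(3,5),(5,2),(6,2)$ and the almost simple groups over $\psu{3}{5}$) delegated to GAP; your index estimates, including the observation that the sign changes $(-1)^k$ are immaterial, match the paper's computation. The genuine divergences are (i) for $n=2^a$, where the paper uses the $\mc{C}_1$ subgroup of type $\mr{GU}_3(q)$ when $n=4$ and the $\mc{C}_2$ subgroup of type $\mr{GL}_{n/2}(q^2).2$ when $n\ge 8$, whereas you use the parabolic $P_1$ for $q$ odd and the imprimitive $\mr{GU}_{n/2}(q)\wr\mr{S}_2$ for $q$ even --- both choices work and your $2$-part estimates $2^{a-1}(q+1)_2\ge 4$ and $q^{n^2/4}\ge 16$ are correct, the paper's choice having the mild advantage of treating $q$ even and odd uniformly; and (ii) at $n=3$, $q=3$, where you replace a GAP check by the parabolic of index $28$. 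One small correction: for the Coxeter-torus normalizer in $\psu{3}{q}$, \cite[Table 8.5]{BHR13} records $c=1$ even when $d=(3,q+1)=3$ (the normalizer in $\mr{GU}_3(q)$ already realizes every determinant, so the $\mr{SU}_3(q)$-class does not split under the diagonal automorphisms); hence Lemma~\ref{l:c=1} applies directly and your fallback of reading the almost simple extensions off Tables 8.5--8.6 is unnecessary, though it would reach the same conclusion.
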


\begin{proof}
Let $G$ be almost simple with simple socle $\psu{n}{q}$ and $n \ge 3$. In anticipation of small exceptions to our arguments, we first do some computations. If $(n,q)$ is one of $(3,3)$, $(3,5)$ or $(5,2)$, we compute in GAP to find faithful maximal subgroups of index divisible by $4$ unless $G \cong \psu{3}{5}$ or $G \cong \psu{3}{5}.2$. If $(n,q)=(6,2)$, then $H \in \mc{C}_1$ of type $\mr{GU}_5(2)$ is a maximal subgroup of $\psu{6}{2}$ of index divisible by $2^5$ and satisfies $c=1=\pi$ by \cite[Table 8.26]{BHR13}. Then we are done in this case by Lemma~\ref{l:c=1}.

Now let $G=\psu{n}{q}$, so $|G| = \tfrac{1}{d} q^{n(n-1)/2} \prod_{k=2}^n (q^k-(-1)^k)$ with $d=(q+1,n)$. By \cite{BHR13} for $3 \le n \le 12$ and \cite[Table 3.5.B]{KL90} for $n \ge 13$, a subgroup $H \le \psu{n}{q}$ in Aschbacher class $\mc{C}_3$ of type $\mr{GU}_m(q^r)$ ($n=mr$, $m \ge 1$, $r \ge 3$~prime) is always maximal (when it exists) with $c=1=\pi$ if $(n,q)$ is not one of $(3,3)$, $(3,5)$, $(5,2)$ or $(6,2)$. We addressed this list of exceptions in the last paragraph. Note that the class $\mc{C}_3$ exists if and only if $n$ is not a power of~$2$. So by \cite[Proposition 4.3.6]{KL90}, if $n$ is not one of these exceptions and $n$ is not a power of $2$, then such a subgroup $H$ has order
\begin{align*}
    |H|=abr|\mr{PSU}_m(q^r)|, \quad \text{where} \quad a=\frac{(q+1,m)(q^r+1)}{(q+1)(q+1,n)}, \quad b=\frac{(q^r+1,m)}{(q+1,m)},
\end{align*} and index
\begin{align*}
    [G:H] = \frac{q^{n(n-1)/2}}{q^{rm(m-1)/2} r} \frac{\prod\limits_{k=1}^{n} (q^k-(-1)^k)}{\prod\limits_{k=1}^{m} (q^{rk}-(-1)^k)}.
\end{align*}
We show that this index is divisible by $4$. First assume that $q$ is a power of $2$. As $r$ is odd, $[G:H]_2 = q^{\frac12 n(n-m)} \ge q^n \ge 4$ since $n \ge 3$ means $n-m \ge 2$. Now assume that $q$ is odd. Since $r \ge 3$ is odd, $q^{rk}-(-1)^k = q^{rk}-(-1)^{rk}$, and so every $r$-th factor of $q^k-(-1)^k$ in the numerator cancels. We have
\begin{align*}
    [G:H]_2 = \prod\limits_{j=0}^{m-1} \prod\limits_{k=jr+1}^{(j+1)r-1} (q^k - (-1)^k)_2 \ge (q+1)_2(q^2-1)_2  \ge 4.
\end{align*}
We have shown that $[G:H]$ is always divisible by $4$. Now, since $c=1=\pi$, Lemma~\ref{l:c=1} shows that such a maximal subgroup $H \le \mr{PSU}_n(q)$ gives rise to a faithful maximal subgroup of any almost simple group with socle $\mr{PSU}_n(q)$ with index divisible by $4$.

Now assume that $G$ is almost simple with socle $\psu{n}{q}$ and $n \ge 3$ is a power of~$2$. If $n=4$, then a subgroup $H \in \mc{C}_1$ of type $\mr{GU}_3(q)$ has index $q^3(q-1)(q^2+1)$ in $\psu{4}{q}$ and satisfies $c=1=\pi$ by \cite[Table 8.10]{BHR13}. This index is divisible by $4$. So we may assume that $n \ge 8$. By \cite[Table 8.46]{BHR13} for $n=8$ and \cite[Table 3.5.B]{KL90} for $n \ge 13$, a subgroup $H \le \psu{n}{q}$ in Aschbacher class $\mc{C}_2$ of type $\mr{GL}_{n/2}(q^2).2$ is maximal and satisfies $c=1=\pi$. Therefore, it suffices by Lemma \ref{l:c=1} to assume that ${G \cong \psu{n}{q}}$ and show that $4$ divides $[G:H]$. By \cite[Proposition 4.2.4]{KL90}, we have $|H|=2a |\psl{n/2}{q^2}|$, where $a=(q-1)(q^2-1,\frac{n}{2})/(q+1,n)$. Therefore, the index is
\begin{align*}
    [G:H] = \frac{q^{n(n-1)/2}}{2 q^{n(n-2)/4}} \frac{\prod\limits_{k=2}^n (q^k-(-1)^k)}{(q-1)\prod\limits_{k=2}^{n/2} (q^{2k}-1)} = \frac12 q^{n^2/4} \prod\limits_{\substack{k=1 \\ \text{odd}}}^n (q^k+1).
\end{align*}
This index is clearly divisible by $4$ if $q$ is even, and if $q$ is odd then $[G:H]_2 \ge \big( \frac12 (q+1)(q^3+1)(q^5+1) \big)_2 \ge 4$. We are done.
\end{proof}

In the proof of the last propositions for $\mr{PSL}_n(q)$ and $\psu{n}{q}$, it was important that $n \ge 3$. Our next two propositions address the case $\mr{PSL}_2(q) \cong \mr{PSU}_2(q)$.

\begin{proposition}\label{p:psl2} %2.5 %PSL(2,q)
Let $G$ be an almost simple group with socle $\mr{PSL}_2(q)$. If $q \neq 9$ and $q \not\equiv 5 \, (8)$, then $G$ has a faithful maximal subgroup of index divisible by $4$.
\end{proposition}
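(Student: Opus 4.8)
The plan is to apply Lemma~\ref{l:c=1}: it suffices to produce, for each admissible $q$, a maximal subgroup $H_S$ of the socle $S = \mr{PSL}_2(q)$ with $[S:H_S]$ divisible by $4$ and whose set of $\mr{Aut}(S)$-conjugates forms a single $S$-conjugacy class, so that $c = 1 = \pi$. Writing $d = (2,q-1)$, so $|S| = q(q^2-1)/d$, the natural candidates are the Borel subgroup $B$ (a point stabiliser in the natural $2$-transitive action of degree $q+1$, so $[S:B] = q+1$) and the normalizers $\Norm{S}{T_{\mathrm s}}$, $\Norm{S}{T_{\mathrm{ns}}}$ of a split, respectively nonsplit, maximal torus, of indices $q(q+1)/2$ and $q(q-1)/2$. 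Each of these three forms a single $S$-class, and since the property of being a Borel subgroup or a torus normalizer of a prescribed order is intrinsic, $\mr{Aut}(S)$ permutes each class; hence $c = 1 = \pi$ for any of these choices and Lemma~\ref{l:c=1} applies. By Dickson's classification of the subgroups of $\mr{PSL}_2(q)$ (see \cite{BHR13}), $B$ is always maximal, while $\Norm{S}{T_{\mathrm{ns}}}$ --- dihedral of order $q+1$ for $q$ odd and of order $2(q+1)$ for $q$ even --- is maximal with finitely many small exceptions.

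The proof then splits on the congruence class of $q$. Suppose first $q$ is odd. If $q \equiv 3\,(4)$, then $4 \mid q+1$, so $[S:B] = q+1$ is divisible by $4$; take $H_S = B$. If $q \equiv 1\,(4)$, then $(q+1)_2 = 2$ but $(q-1)_2 \ge 4$, and the hypothesis $q \not\equiv 5\,(8)$ forces $q \equiv 1\,(8)$, whence $(q-1)_2 \ge 8$ and $[S:\Norm{S}{T_{\mathrm{ns}}}] = q(q-1)/2$ has $2$-part $(q-1)_2/2 \ge 4$; since $q \neq 9$, the subgroup $\Norm{S}{T_{\mathrm{ns}}}$ is maximal, so take $H_S = \Norm{S}{T_{\mathrm{ns}}}$. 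If $q = 2^f$ is even, then $d = 1$ and $[S:\Norm{S}{T_{\mathrm{ns}}}] = q(q-1)/2 = 2^{f-1}(q-1)$ has $2$-part $2^{f-1}$, divisible by $4$ as soon as $f \ge 3$, i.e.\ $q \ge 8$; in that range $\Norm{S}{T_{\mathrm{ns}}}$ is maximal, so take $H_S = \Norm{S}{T_{\mathrm{ns}}}$. In every case Lemma~\ref{l:c=1} yields a faithful maximal subgroup $H_G = \Norm{G}{H_S}$ of $G$ with $[G:H_G] = [S:H_S]$ divisible by $4$.

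It remains to treat a bounded list of small $q$: those excluded from the generic form of Dickson's theorem, and $q = 4$, which via $\mr{PSL}_2(4) \cong \mr{PSL}_2(5)$ falls under the excluded family $q \equiv 5\,(8)$. For these we check all the relevant almost simple groups directly in GAP. I expect the only genuinely delicate points to be (i) the $2$-adic bookkeeping for odd $q$, in particular confirming that $q \equiv 5\,(8)$ is exactly the congruence for which all three candidate indices $q+1$, $q(q+1)/2$, $q(q-1)/2$ fail to be divisible by $4$, and (ii) identifying the small exceptions to maximality of $\Norm{S}{T_{\mathrm{ns}}}$ --- it is one such exception that forces the separate exclusion of $q = 9$, where the only candidate of index divisible by $4$, namely $\Norm{S}{T_{\mathrm{ns}}} \cong D_{10}$ of index $36$ in $\mr{A}_6$, is not maximal.
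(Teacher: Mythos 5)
Your proposal is correct and follows essentially the same route as the paper: both arguments select the Borel subgroup (index $q+1$) for $q\equiv 3\;(4)$ and the nonsplit torus normalizer (index $q(q-1)/2$) for $q\equiv 1\;(8)$ and for even $q\ge 8$, verify $c=1=\pi$ from the classification of maximal subgroups of $\mr{PSL}_2(q)$, and conclude via Lemma~\ref{l:c=1}. The paper cites \cite[Table 8.1]{BHR13} directly for maximality and the value of $c$ rather than invoking Dickson's theorem and an ``intrinsic characterization'' argument, but the content is the same.
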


\begin{proof}
We refer to \cite[Table 8.1]{BHR13} for the maximal subgroups of $\psl{2}{q}$. First assume that $q$ is even. We may assume that $q \ge 8$ since $\psl{2}{2}$ is not simple and ${\psl{2}{4} \cong \psl{2}{5}}$. The maximal subgroups of $\psl{2}{q}$ in class $\mc{C}_3$ isomorphic to $D_{2(q+1)}$ have index ${(q/2)(q-1)}$, which is divisible by $4$. Now assume that $q \equiv 3,7 \, (8)$. The maximal subgroups of $\psl{2}{q}$ in class $\mc{C}_1$ have order $q(q-1)/2$ and index $q+1$, which is divisible by $4$. Finally, assume that $q \equiv 1 \, (8)$. As long as $q \neq 9$, $\psl{2}{q}$ has a maximal subgroup in class $\mc{C}_3$ isomorphic to $D_{q+1}$ of index $q(q-1)/2$, which is divisible by $4$. Since $c=1$ for all the maximal subgroups we have chosen, we have proved the claim for almost simple groups $G$ by Lemma \ref{l:c=1}.
\end{proof}

To finish the case where the almost simple group $G$ has socle $\psl{2}{q}$, it remains to consider $q=9$ and $q \equiv 5 \, (8)$.

\begin{proposition}\label{p:psl2-q5} %2.6 %PSL(2,q), q = 5 (8), q=9
Let $G$ be an almost simple group with socle $\psl{2}{q}$ and $q \equiv 5 \, (8)$ or $q=9$. Then every maximal subgroup of $G$ has almost odd index unless $G$ is isomorphic to $\mr{PGL}_2(9)$, $\mr{M}_{10}$ or $\mr{Aut}(\psl{2}{9})$.
\end{proposition}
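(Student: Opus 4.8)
The plan is to go through the maximal subgroups of each relevant almost simple group case by case, using the explicit tables for $\psl{2}{q}$ in \cite[Table 8.1]{BHR13}, and to separate the treatment of the generic family $q \equiv 5\,(8)$ from the sporadic value $q = 9$, whose almost simple overgroups ($\psl{2}{9} \cong \mr{A}_6$, $\mr{PGL}_2(9)$, $\mr{M}_{10}$, $\mr{S}_6$, $\mr{Aut}(\psl{2}{9})$) are best handled by a direct \textsf{GAP} computation as done elsewhere in the paper. For $q = 9$ we simply report that the computation confirms that $\mr{A}_6$ and $\mr{S}_6$ have all maximal subgroups of almost odd index, while $\mr{PGL}_2(9)$, $\mr{M}_{10}$, and $\mr{Aut}(\psl{2}{9})$ each possess a maximal subgroup of index divisible by $4$.

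For the generic case, let $q \equiv 5\,(8)$, write $q = p^f$, and note $|\psl{2}{q}| = \tfrac{1}{2}q(q-1)(q+1)$. Here $q - 1 \equiv 4\,(8)$, so $(q-1)_2 = 4$, while $q + 1 \equiv 6\,(8)$, so $(q+1)_2 = 2$; hence $|S|_2 = (q^2-1)_2/2 = 4$, and a Sylow $2$-subgroup of $S$ is a Klein four-group. The first step is to enumerate the maximal subgroups $H_S$ of $S = \psl{2}{q}$: the Borel subgroup of order $\tfrac12 q(q-1)$ and index $q+1$; the dihedral subgroup $D_{q-1}$ of index $\tfrac12 q(q+1)$; the dihedral subgroup $D_{q+1}$ of index $\tfrac12 q(q-1)$; subfield subgroups $\psl{2}{q_0}$ or $\mr{PGL}_2(q_0)$ when $q = q_0^t$ for a prime $t$; and the subgroups $\mr{A}_4$, $\mr{S}_4$, $\mr{A}_5$ that occur for appropriate $q$. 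One checks that the index $q + 1$ is $\equiv 2\,(4)$, that $\tfrac12 q(q+1)$ is odd, and that $\tfrac12 q(q-1)$ has $2$-part exactly $(q-1)_2/2 = 2$, so all three generic families have almost odd index. The exceptional subgroups $\mr{A}_4, \mr{S}_4, \mr{A}_5$ have orders with $2$-part $4, 8, 4$ respectively, but $|S|_2 = 4$ forces the index to be odd in each case (indeed $\mr{S}_4$ cannot even embed); similarly the subfield subgroups have $2$-part of the index equal to $|S|_2 / |H_S|_2$, which is $1$ since $q_0 \equiv 5\,(8)$ or $q_0 \equiv \pm 1\,(8)$ forces the relevant $2$-parts to match — this needs a short check but is routine.

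The remaining work is to pass from $S$ to an arbitrary almost simple $G$ with socle $S$; since $q \equiv 5\,(8)$ is not a square and $\mr{Out}(\psl{2}{q}) = C_2 \times C_f$ (diagonal times field automorphisms, no graph automorphism), the relevant overgroups are $S.2^a$ for $a \le 1$ combined with field automorphisms. The maximal subgroups of $G$ either contain $S$ — in which case the index divides $|\mr{Out}(S)|$ and one argues it is not divisible by $4$ because each "layer" ($\mr{PGL}$, field) contributes at most one factor of $2$ and a maximal subgroup omits only one such layer — or they are faithful, and then by Lemma~\ref{l:c=1} (noting $c = 1 = \pi$ holds for the generic maximal subgroups of $\psl{2}{q}$, which is recorded in \cite[Table 8.1]{BHR13}) they have the same index as a maximal subgroup of $S$, hence almost odd index. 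I expect the main obstacle to be the bookkeeping for maximal subgroups of $G$ that contain $S$: one must rule out, for instance, a maximal subgroup of $\mr{PGL}_2(q) \times \langle \phi \rangle$-type extensions whose index could in principle accumulate two factors of $2$; this is handled by observing that $\mr{Out}(S)$ has a cyclic Sylow $2$-subgroup of order $2$ when $f$ is odd (the case for $q = p^f$ with $f$ odd is the one asserted in the theorem) or by a careful subgroup-lattice argument when $f$ is even, but in all cases a maximal subgroup of $G/S$ inside $\mr{Out}(S)$ has index a prime, so its preimage has prime index over $S.(\text{something})$, keeping the $2$-part of the index at most $2$.
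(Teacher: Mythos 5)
Your treatment of $q=9$ (GAP), of the maximal subgroups of $S=\psl{2}{q}$ itself (all have even order, so since $|S|_2=4$ every index is almost odd), and of the maximal subgroups of $G$ containing $S$ (prime index because $\mr{Out}(S)$ is abelian) all line up with the paper's argument and are fine. The genuine gap is in your last step, where you claim that every \emph{faithful} maximal subgroup $M$ of $G$ ``by Lemma~\ref{l:c=1} \dots ha[s] the same index as a maximal subgroup of $S$.'' Lemma~\ref{l:c=1} runs in the opposite direction: it manufactures a faithful maximal subgroup of $G$ from a maximal subgroup of $S$ with $c=1=\pi$; it says nothing about whether an arbitrary faithful maximal subgroup of $G$ arises this way. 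What is true is that $G=SM$ gives $[G:M]=[S:M\cap S]$, but $M\cap S$ need not be maximal in $S$ --- these are exactly the ``novelty'' maximal subgroups, and for them $[S:M\cap S]$ is a proper multiple of the index of a maximal overgroup of $M\cap S$ in $S$ and could a priori acquire extra factors of $2$. Your proposal never rules this out; indeed you misidentify the ``main obstacle'' as the subgroups containing $S$, which is the easy case.

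The paper closes this gap by invoking Giudici's classification of the novelty pairs $(G,M)$ for socle $\psl{2}{q}$ \cite[Theorem~1.1]{Gi07}: the only pair compatible with $q\equiv 5\,(8)$ is $G\cong\mr{PGL}_2(p)$ with $p\equiv\pm 11,19\,(40)$ and $M\cong\mr{S}_4$, and there $|G|_2=(p^2-1)_2=8=|\mr{S}_4|_2$, so $[G:M]$ is odd. This is a short but necessary verification --- had the novelty had index divisible by $4$, the proposition would be false --- so your proof is incomplete without it. (A minor additional point: the routine check you defer for subfield subgroups does work, since $f$ is necessarily odd when $q\equiv 5\,(8)$, forcing $q_0\equiv 5\,(8)$ and hence $|\psl{2}{q_0}|_2=|S|_2$.)
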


\begin{proof}
Let $\psl{2}{9} \le G \le \mr{Aut}(\psl{2}{9})$. We compute in GAP that no maximal subgroup of $G$ has index divisible by $4$ if $G$ is isomorphic to $\mr{PSL}_2(9) \cong \mr{A}_6$ or $\mr{S}_6$, and $G$ contains faithful maximal subgroups of index divisible by $4$ if $G$ is isomorphic to $\mr{PGL}_2(9)$, $\mr{M}_{10}$ or $\mr{Aut}(\psl{2}{9})$.

Now, let $S \cong \psl{2}{q} \le G \le \mr{P \Gamma L}_2(q)$ with $q=p^f$ and $q \equiv 5 \, (8)$. First assume that $G = S$, so $|G|=\frac12 q(q^2-1)$ and $|G|_2=4$. Then a maximal subgroup has index divisible by $4$ if and only if it has odd order. By examining the list of maximal subgroups in \cite[Table 8.1 and 8.2]{BHR13} or \cite[Theorem 2.2]{Gi07}, it is easy to see that this never occurs. Finally, assume $S<G$, and let $M$ be a maximal subgroup of $G$. If $S \le M$, then $M/S$ is a maximal subgroup of $G/S \le \mr{Out}(\mr{PSL}_2(q)) \cong \Z_2 \times \Z_f$, which is abelian. So every such maximal subgroup has prime index and is not divisible by~$4$. Now assume that $S \not\le M$, so $G=SM$ and $[G:M]=[S:M\cap S]$. If $M \cap S$ is maximal in $S$, then its index in $S$ is almost odd by what we have already shown. If $M \cap S$ is not maximal in~$S$, then $M$ is a so-called ``novelty'' maximal subgroup of $G$. The pairs of such $(G,M)$ are given in \cite[Theorem 1.1]{Gi07}. The only pair that possibly satisfies our congruence on $q$ is $G \cong \mr{PGL}_2(q)$ with $q=p \equiv \pm 11, 19 \, (40)$ and $M \cong \mr{S}_4$. But $|G|=q(q^2-1)$, so $|G|_2=8$ and $|M|_2=8$, so $[G:M]$ is odd. We are done.
\end{proof}

Now we turn to the symplectic groups.

\begin{proposition} %PSP(2n,q) %2.7
Let $G$ be an almost simple group with socle isomorphic to a simple group $\mr{PSp}_{2n}(q)$. Then $G$ has a faithful maximal subgroup of index divisible by $4$. 
\end{proposition}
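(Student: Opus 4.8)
The approach mirrors the preceding propositions: I would locate inside the simple group $S=\psp{2n}{q}$ a maximal subgroup of index divisible by $4$ satisfying $c=1=\pi$, and then invoke Lemma~\ref{l:c=1} to obtain a faithful maximal subgroup of the same index in every almost simple $G$ with socle $S$. First I would clear away the boundary cases. Since $\psp{2}{q}\cong\psl{2}{q}$, the case $n=1$ is already settled by Propositions~\ref{p:psl2} and~\ref{p:psl2-q5}, so we may assume $n\ge 2$; and $(n,q)=(2,2)$ is excluded because $\sp{4}{2}$ is not simple. (The exceptional isomorphism $\psp{4}{3}\cong\psu{4}{2}$ renders that case redundant with Proposition~\ref{p:psu}, but the argument below handles it directly.) I would then split into two cases according to the parity of $q$.

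For $q$ odd, take $H\le\sp{2n}{q}$ to be the stabilizer of a maximal totally isotropic subspace, that is, the maximal parabolic subgroup $P_n$ in Aschbacher class $\mc{C}_1$. Its index in $\sp{2n}{q}$ equals the number of Lagrangian subspaces of the symplectic space, namely $\prod_{k=1}^{n}(q^k+1)$, and since $-I\in H$ this is also the index of the image of $H$ in $\psp{2n}{q}$. The $2$-part is easy: $q$ odd makes $q+1$ and $q^2+1$ both even, so $4\mid(q+1)(q^2+1)$, and as $n\ge 2$ this divides $\prod_{k=1}^n(q^k+1)$. Since the Dynkin diagram $C_n$ has no symmetry (the graph automorphism of $C_2=B_2$ exists only in even characteristic), the parabolics of type $P_n$ form a single $S$-class that is invariant under $\mr{Aut}(S)$, so $c=1=\pi$; here I would cite the tables of~\cite{BHR13} for $2n\le 12$ and~\cite[Table 3.5.C]{KL90} for $2n\ge 14$. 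Lemma~\ref{l:c=1} then finishes this case.

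For $q$ even we have $S=\sp{2n}{q}$, and parabolic subgroups now have odd index, so I would instead use $H=\mr{GO}^-_{2n}(q)$, the $\mc{C}_8$ subgroup stabilizing a minus-type quadratic form polarizing to the symplectic form; it is maximal by~\cite{BHR13} and~\cite[Table 3.5.C]{KL90}. From the order formulas in~\cite[Table 24.1]{MT11} one computes $[S:H]=\tfrac12 q^n(q^n-1)$. Writing $q=2^f$, the factor $q^n-1$ is odd and $\tfrac12 q^n=2^{nf-1}$, and since $n\ge 2$ with $(n,q)\ne(2,2)$ we have $nf\ge 3$, so $4\mid[S:H]$. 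The two $\mc{C}_8$ classes $\mr{GO}^{\pm}_{2n}(q)$ have different orders and hence cannot be fused, field automorphisms preserve each class, and the $C_2$ graph automorphism of $\sp{4}{q}$ likewise preserves them; so $c=1=\pi$, and Lemma~\ref{l:c=1} applies.

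The index computations and $2$-adic estimates are routine; the delicate point throughout is verifying $c=1=\pi$, i.e.\ that the chosen geometric subgroup fills out a single $S$-conjugacy class stable under all automorphisms of $S$ — in particular, that no graph or graph-field automorphism fuses it with a second class. Restricting the choice to parabolic and $\mc{C}_8$ subgroups is precisely what makes this transparent, from the shape of the $C_n$ diagram and the explicit class data recorded in~\cite{BHR13} and~\cite{KL90}.
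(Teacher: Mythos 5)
Your route is genuinely different from the paper's: for $q$ odd you use the Lagrangian parabolic $P_n$ of index $\prod_{k=1}^n(q^k+1)$, and for $q$ even the $\mc{C}_8$ subgroup $\mr{O}^-_{2n}(q)$ of index $\tfrac12 q^n(q^n-1)$, whereas the paper works with the $\mc{C}_3$ subgroups of type $\sp{m}{q^r}$ in the generic case. Your index computations and $2$-adic estimates are correct, and for $q$ odd, as well as for $q$ even with $n\ge 3$ (where $\mr{Out}$ consists only of field automorphisms, which preserve each type $\varepsilon$), the verification of $c=1=\pi$ goes through; in the odd case this is arguably cleaner than the paper's calculation.

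There is, however, a genuine gap at $n=2$, $q=2^f\ge 4$. Your claim that the graph automorphism of $\sp{4}{q}$ preserves the two $\mc{C}_8$ classes is false: the extraordinary graph automorphism $\gamma$ (whose square is the Frobenius) interchanges the $S$-class of the $\mc{C}_8$ subgroup $\mr{O}^-_4(q)$ with the $S$-class of the $\mc{C}_3$ field-extension subgroup $\sp{2}{q^2}.2$ (abstractly isomorphic but not $S$-conjugate), and likewise interchanges $\mr{O}^+_4(q)$ with the $\mc{C}_2$ subgroup $\sp{2}{q}\wr 2$; this fusion is recorded in \cite[Table~8.14]{BHR13} and is already visible for $q=2$, where the outer automorphism of $\sp{4}{2}\cong\mr{S}_6$ swaps the two classes of $\mr{S}_5$'s. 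Consequently $c=2$ for $\mr{O}^-_4(q)$, Lemma~\ref{l:c=1} does not apply, and for an almost simple $G$ inducing $\gamma$ your construction yields no faithful maximal subgroup (the normalizer of $\mr{O}^-_4(q)$ need not be maximal there). The paper sidesteps exactly this by using, for $n=2$ and $q$ even, the $\mc{C}_5$ subfield subgroup of type $\sp{4}{q_0}$ with $q=q_0^r$, which is stable under all of $\mr{Aut}(S)$ and has index divisible by $q_0^{4(r-1)}>4$; your proof needs this (or another $\gamma$-stable class) to cover that one family.
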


\begin{proof}
We may assume that $n \ge 2$ since $\mr{PSp}_{2}(q) \cong \psl{2}{q}$, which we have already considered. First assume that we are not in the case that $n=2$ and $q$ is even. Then for $\psp{2n}{q}$ ($n \ge 2$), a subgroup $H$ in Aschbacher class $\mc{C}_3$ of type $\sp{m}{q^r}$ ($2n=mr$, $r$ prime and $m$ even) is always maximal and satisfies ${c=1=\pi}$ by \cite{BHR13} for $2 \le n \le 6$ and \cite[Table 3.5.C]{KL90} for $n \ge 7$. By Lemma \ref{l:c=1}, the proposition is then proved if we assume that ${G=\psp{2n}{q}}$ with ${|G|=\frac1d q^{n^2} \prod_{k=1}^n (q^{2k}-1)}$ and $d=(q-1,2)$ is simple and we show that the index $[G:H]$ is divisible by~$4$. By \cite[Proposition 4.3.10]{KL90}, we have ${|H|=r|\psp{m}{q^r}|=\frac{r}{d'} (q^r)^{(m/2)^2} \prod_{k=1}^{m/2} (q^{2rk}-1)}$, where $d'=(q^r-1,2)$. Note that $d=d'$ for all $q$. When $q$ is odd, we have ${[G:H]_2 \ge (q^2-1)_2/r_2 \ge 4}$. When $q$ is even, $[G:H]_2=q^{n^2-r(m/2)^2}/r_2=q^{n(n-m/2)}/r_2 \ge 4$ unless $q=2$, $n=2$, $m=2$ and $r=2$, but this case is excluded by our assumption.

Finally, assume that $n=2$ and $q$ is even. We addressed $\psp{4}{2} \cong \mr{S}_6$ in Proposition~\ref{p:psl2-q5}, so assume that $q>2$. By \cite[Table~8.14]{BHR13}, a subgroup $H \le \psp{4}{q}$ in class $\mc{C}_5$ of type $\sp{4}{q_0}$ with $q=q_0^r$ ($r$~prime) is maximal with $c=1=\pi$ and index divisible by $q_0^{4(r-1)}>4$. So we are done in this case by Lemma \ref{l:c=1}, and the proposition is proved.
\end{proof}

Next, we consider the orthogonal groups of odd dimension.

\begin{proposition} %POmega odd %2.8
Let $G$ be an almost simple group with socle isomorphic to a simple group $\mr{P\Omega}_{2n+1}(q)$ $(q$ odd$)$. Then $G$ has a faithful maximal subgroup of index divisible by $4$.
\end{proposition}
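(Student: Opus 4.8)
The plan is to follow the same strategy as in the preceding propositions: exhibit a maximal subgroup $H$ of $S=\mr{P\Omega}_{2n+1}(q)$ with $c=1=\pi$ whose index is divisible by $4$, and then invoke Lemma~\ref{l:c=1} to produce a faithful maximal subgroup of index divisible by $4$ in every almost simple group $G$ with socle $S$. Since $\mr{P\Omega}_3(q) \cong \psl{2}{q}$ and $\mr{P\Omega}_5(q) \cong \psp{4}{q}$ were already treated (in Propositions~\ref{p:psl2} and~\ref{p:psl2-q5}, and in the preceding proposition, respectively), I may assume $n \ge 3$. Recall that $|S|=\tfrac{1}{2}q^{n^2}\prod_{k=1}^n(q^{2k}-1)$ and $\mr{Out}(S)\cong \Z_2\times\Z_f$ for $q=p^f$, so $\mr{Out}(S)$ is abelian and, as $n\ge 3$, the diagram $B_n$ admits no graph automorphism.

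The subgroup I would use is the parabolic $H=P_n$, the stabilizer in $S$ of a maximal totally singular subspace. By \cite{BHR13} for $3\le n\le 5$ and \cite[Table~3.5.D]{KL90} for $n\ge 6$, this is a maximal subgroup of $S$; moreover the maximal totally singular $n$-spaces form a single $\Omega_{2n+1}(q)$-orbit which is preserved by every automorphism of $S$, so $c=1=\pi$. Its index is the number of such subspaces, $[S:P_n]=\prod_{k=1}^n(q^k+1)$, which follows from the order formulas. It then remains only to check that $4\mid[S:P_n]$, a short $2$-adic calculation: since $q$ is odd, $(q^k+1)_2=(q+1)_2\ge 2$ for $k$ odd while $(q^k+1)_2=2$ for $k$ even (because $q^k\equiv 1\pmod 8$), so $[S:P_n]_2=\big((q+1)_2\big)^{\lceil n/2\rceil}\,2^{\lfloor n/2\rfloor}\ge 2^2$ as soon as $n\ge 3$. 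Lemma~\ref{l:c=1} then completes the argument. To be safe about the smallest cases, one can additionally verify $\Omega_7(3)$ and $\Omega_9(3)$ directly in GAP, confirming maximality and the values $c=1=\pi$.

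The only point that genuinely needs care is the choice of $H$, because the most obvious reducible maximal subgroups do not have index divisible by $4$ in general. The stabilizer $P_1$ of a singular point has index $(q^{2n}-1)/(q-1)$, whose $2$-part is $(q+1)_2\,(n)_2$ and hence equals $2$ whenever $q\equiv 1\pmod 4$ and $n$ is odd; the two stabilizers of a nonsingular point have indices $\tfrac{1}{2}q^n(q^n\pm1)$, which are in fact odd for suitable parities of $n$ and residues of $q$ modulo $4$. This is what forces the use of the ``large'' parabolic $P_n$. (A $\mc{C}_3$ subgroup of type $\mr{O}_m(q^r)$ with $2n+1=mr$ would work equally well when it exists, in parallel with the symplectic case, but $2n+1$ may be prime, so these subgroups are not always available.) Once $P_n$ is chosen, everything else is routine.
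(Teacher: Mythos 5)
Your proposal is correct and follows essentially the same route as the paper: the paper also reduces to $n\ge 3$ via the isomorphisms $\mr{P}\Omega_3(q)\cong\psl{2}{q}$ and $\mr{P}\Omega_5(q)\cong\psp{4}{q}$, takes the parabolic $H=P_n$ with $c=1=\pi$ (citing \cite{BHR13} for $3\le n\le 5$ and \cite[Table~3.5.D]{KL90} for $n\ge 6$), computes $[S:P_n]=\prod_{k=1}^n(q^k+1)$ from \cite[Proposition~4.1.20]{KL90}, and concludes $[S:P_n]_2\ge (q+1)_2(q^2+1)_2\ge 4$ before applying Lemma~\ref{l:c=1}. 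Your extra remarks (the finer $2$-adic count, the failure of $P_1$ and the nonsingular point stabilizers, and the optional GAP check) are accurate but not needed.
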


\begin{proof}
We may assume that $n\ge 3$ since $\mr{P}\Omega_5(q) \cong \psp{4}{q}$ and $\mr{P}\Omega_3(q) \cong \psl{2}{q}$, which we have already considered. For $\mr{P}\Omega_{2n+1}(q)$ ($q$ odd), a parabolic subgroup $H=P_m$ ($1 \le m \le n$) in Aschbacher class $\mc{C}_1$ is always maximal and satisfies $c=1=\pi$ by \cite{BHR13} for $3 \le n \le 5$ and \cite[Table 3.5.D]{KL90} for $n \ge 6$. By Lemma \ref{l:c=1}, the proposition is then proved if we assume that ${G=\mr{P}\Omega_{2n+1}(q)}$ with ${|G|=\frac12 q^{n^2} \prod_{k=1}^n (q^{2k}-1)}$ is simple and we show that the index $[G:H]$ is divisible by~$4$. Letting $H=P_n$, we have by \cite[Proposition 4.1.20]{KL90} that $|H|=\frac12 q^{n(n+1)/2}|\mr{GL}_n(q)|=\frac12 q^{n^2} \prod_{k=1}^n(q^k-1)$. Therefore, $[G:H]=\prod_{k=1}^n (q^k+1)$. Since $q$ is odd and $n \ge 3$, $[G:H]_2 \ge (q+1)_2 (q^2+1)_2 \ge 4$. 
\end{proof}

To finish with the classical groups, we consider the orthogonal groups of even dimension.

\begin{proposition}\label{p:pomegaminus} %2.9 %POmega even minus
Let $G$ be an almost simple group with socle a simple group $\mr{P\Omega}_{2n}^{-}(q)$. Then $G$ has a faithful maximal subgroup of index divisible by $4$. 
\end{proposition}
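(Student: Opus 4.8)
The plan is to follow the template of the preceding propositions: exhibit a single maximal subgroup $H$ of the socle $S=\mr{P\Omega}_{2n}^{-}(q)$ with $c=1=\pi$ and $4\mid[S:H]$, so that Lemma~\ref{l:c=1} then yields a faithful maximal subgroup of index divisible by~$4$ in every almost simple $G$ with socle~$S$. First I would dispose of the low ranks: since $\mr{P\Omega}_4^{-}(q)\cong\psl{2}{q^2}$ and $\mr{P\Omega}_6^{-}(q)\cong\psu{4}{q}$ have already been handled in Propositions~\ref{p:psl2}, \ref{p:psl2-q5} and~\ref{p:psu}---and contribute no exceptions beyond those listed in Theorem~\ref{thm:almost-simple-faithful}, as $q^2\not\equiv5\pmod 8$ for every $q$ and $\psu{4}{q}$ is never $\psu{3}{5}$---we may assume $n\ge4$.

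If $q$ is odd, I would take $H$ to be the parabolic subgroup stabilising a maximal totally singular subspace. A nondegenerate minus-type form in dimension $2n$ has Witt index $n-1$, so such a subspace has dimension $n-1$, and there is a single $S$-orbit of them: unlike in the plus-type case, no diagonal automorphism can fuse two classes here, so by \cite{BHR13} for $4\le n\le6$ and \cite[Table~3.5.F]{KL90} for $n\ge7$ this $H$ is maximal with $c=1=\pi$. Counting the subspaces gives
\[
    [S:H]=\prod_{k=2}^{n}(q^k+1),
\]
and since $q$ is odd and $n\ge4$ the first two factors already force $[S:H]_2\ge(q^2+1)_2\,(q^3+1)_2\ge2\cdot2=4$. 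Lemma~\ref{l:c=1} then settles this case.

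If $q$ is even, then $\mr{P\Omega}_{2n}^{-}(q)=\Omega_{2n}^{-}(q)$, and every parabolic contains a Sylow $2$-subgroup and hence has odd index; instead I would take $H$ to be the $\mc{C}_1$ subgroup stabilising a nondegenerate $2$-subspace $U$ of minus type, so that $U^{\perp}$ has plus type and the ambient form remains of minus type. By \cite{BHR13} for $2n\le12$ and \cite[Table~3.5.F]{KL90} for $n\ge7$ this $H$ is maximal with $c=1=\pi$, apart from a short list of small $(n,q)$ that I would dispose of directly in GAP~\cite{GAP4}. Since $|\Omega_{2n}^{-}(q)|_2=q^{n(n-1)}$ while the $2$-part of $|H|$ is $2\,q^{(n-1)(n-2)}$ (the factor $\mr{O}_2^{-}(q)$ contributing only a single~$2$, the factor $\mr{O}_{2n-2}^{+}(q)$ contributing $2\,q^{(n-1)(n-2)}$, and the condition of lying in $\Omega$ halving the product), we obtain $[S:H]_2=\tfrac{1}{2}\,q^{2(n-1)}$ (up to a bounded factor coming from the precise structure of $H$), which is far larger than $4$ once $n\ge4$. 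Lemma~\ref{l:c=1} again concludes.

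The divisibility arithmetic is robust for $n\ge4$, so the point I expect to be fiddliest is the bookkeeping needed to establish maximality together with the condition $c=1=\pi$. In the even case one must confirm that the $\mc{C}_1$ member of type $\mr{O}_2^{-}(q)\perp\mr{O}_{2n-2}^{+}(q)$ is genuinely maximal, that is, not absorbed into a larger $\mc{C}_1$ or $\mc{C}_2$ subgroup, for the remaining small~$q$; and throughout one must keep careful track of the scalar factor $d=(4,q^n+1)$ and of the index-$2$ passages among $\mr{O}$, $\mr{SO}$, $\Omega$ and $\mr{P\Omega}$ in the order formulas. Those few genuinely small exceptions are exactly what the GAP computation is there to absorb.
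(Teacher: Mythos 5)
Your argument is correct and follows the paper's overall template (exhibit one maximal subgroup of the socle with $c=1=\pi$ and index divisible by $4$, then invoke Lemma~\ref{l:c=1}), but your choice of subgroup is genuinely different. The paper splits on the parity of $n$ and works entirely in Aschbacher class $\mc{C}_3$: for $n$ even it takes $H$ of type $\mr{O}_n^-(q^2)$, for $n$ odd of type $\mr{GU}_n(q)$, and in each case the order formulas from \cite[Propositions 4.3.16, 4.3.18]{KL90} give $[G:H]_2\ge 4$ uniformly in $q$. You instead split on the parity of $q$ and stay in class $\mc{C}_1$: the parabolic $P_{n-1}$ for $q$ odd, whose index $\prod_{k=2}^n(q^k+1)$ makes the $2$-adic estimate $(q^2+1)_2(q^3+1)_2\ge 4$ essentially immediate, and the stabiliser of a nondegenerate minus-type $2$-space for $q$ even, where $[S:H]_2=\tfrac12 q^{2(n-1)}$ is comfortably large; both index computations check out. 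What your route buys is cleaner arithmetic (especially in the odd case, where the paper has to track the scalar factors $d,d'$ through \cite[Proposition 4.3.16]{KL90}); what it costs is exactly the point you flag: parabolics are useless in even characteristic, and the replacement $\mr{O}_2^-(q)\perp\mr{O}_{2n-2}^+(q)$ is the kind of reducible subgroup for which the tables in \cite{BHR13} and \cite[Table~3.5.F]{KL90} must be consulted to rule out absorption into a larger subgroup for small $q$ --- that verification (together with $c=1$, which holds since $\Omega$ is already transitive on such $2$-spaces) is a genuine obligation, not a formality, though it does go through for $n\ge 4$. One small remark on your reduction to $n\ge4$: note that $\mr{P\Omega}_4^-(3)\cong\psl{2}{9}\cong\mr{A}_6$ is an exception of Proposition~\ref{p:psl2-q5}, so your parenthetical justification should cite $q^2=9$ as well as $q^2\not\equiv 5\,(8)$; as with the paper's own proof, this causes no harm because $\mr{A}_6$ and $\mr{S}_6$ are already listed exceptions in Theorem~\ref{thm:almost-simple-faithful} and the low-dimensional isomorphisms mean these socles are classified under their linear and unitary names.
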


\begin{proof}
We may assume that $n \ge 4$ since $\mr{P\Omega}_6^-(q) \cong \psu{4}{q}$ and $\mr{P\Omega}_4^-(q) \cong \psl{2}{q^2}$, which we have already considered. For $\mr{P\Omega}_{2n}^-(q)$ ($n \ge 4$), subgroups in Aschbacher class $\mc{C}_3$ of type $\mr{GU}_n(q)$ for $n$ odd and $\mr{O}^-_{2n/r}(q^r)$ ($r \mid 2n$, $r$ prime, $2n/r \ge 3$) for $n$ even are always maximal and satisfy $c=1=\pi$ by \cite{BHR13} for $4\le n \le 6$ and \cite[Table~3.5.F]{KL90} for $n \ge 7$. By Lemma \ref{l:c=1}, we can assume that ${G=\mr{P}\Omega_{2n}^-(q)}$ is simple with ${|G|=\frac1d q^{n^2-n} (q^n+1) \prod_{k=1}^{n-1} (q^{2k}-1)}$, where $d=(2,q-1)$ if $n$ is even and $d=(4,q+1)$ if $n$ is odd, and show that the indices of these maximal subgroups are divisible by~$4$.

First assume that $n$ is even. Let $r=2$, so that $2n/r=n \ge 4$ is even, and let $H \in \mc{C}_3$ of type $\mr{O}_n^-(q^2)$. We have by \cite[Proposition 4.3.16]{KL90} that $|H|=2|\mr{P\Omega}_n^-(q^2)|=\frac{2}{d'} (q^2)^{(n/2)^2-n/2}((q^2)^{n/2}+1)\prod_{k=1}^{n/2-1}((q^2)^{2k}-1)=\frac{2}{d'}q^{n^2/2-n}(q^n+1) \prod_{k=1}^{n/2-1} (q^{4k}-1)$, where $d'=(2,q^2-1)$ if $n/2$ is even and $d'=(4,q^2+1)$ if $n/2$ is odd. So for the index we have $[G:H]_2 \ge (d'/2d) (q^{n^2/2})_2(q^2-1)_2 \ge 4$ since when $q$ is odd, $d'$ is $2$ or $4$ and $d = 2$ as $n$ is even.

Now assume that $n$ is odd, and let $H \in \mc{C}_3$ of type $\mr{GU}_n(q)$. We have by \cite[Proposition 4.3.18]{KL90} that $|H|=\frac{q+1}{(4,q+1)}|\psu{n}{q}|(q+1,n)={\frac{1}{(4,q+1)}q^{n(n-1)/2}\prod_{k=1}^n (q^k-(-1)^k)}$. Since $q^{2k}-1=(q^k-(-1)^k)(q^k-(-1)^{k+1})$, we have for the index that $[G:H]=q^{n(n-1)/2}{\prod_{k=1}^{n-1}(q^k-(-1)^{k+1})}$. Since $n\ge 4$, this index is clearly divisible by $4$ whether $q$ is even or odd.
\end{proof}

\begin{proposition} %POmega even plus %2.10
Let $G$ be an almost simple group with socle a simple group $\mr{P\Omega}_{2n}^{+}(q)$. Then $G$ has a faithful maximal subgroup of index divisible by $4$. 
\end{proposition}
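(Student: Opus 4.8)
The plan is to follow the template of the preceding propositions: I would produce in the simple group $S=\mr{P}\Omega_{2n}^+(q)$ a maximal subgroup of index divisible by $4$ with $c=1$, and then invoke Lemma~\ref{l:c=1} to deduce the statement for an arbitrary almost simple group with socle $S$. First one reduces to $n\ge 4$, since $\mr{P}\Omega_6^+(q)\cong\psl{4}{q}$ was handled in Proposition~\ref{p:psln} and $\mr{P}\Omega_4^+(q)\cong\psl{2}{q}\times\psl{2}{q}$ is not simple. Recall that $|S|=\tfrac1d q^{n(n-1)}(q^n-1)\prod_{k=1}^{n-1}(q^{2k}-1)$ with $d=(4,q^n-1)$.

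For $q$ odd I would take $H=P_2$, the stabiliser of a totally singular $2$-space, which is maximal by \cite{BHR13} for $4\le n\le 6$ and by \cite[Table~3.5.E]{KL90} for $n\ge 7$. Since $S$ is transitive on totally singular $2$-spaces, and the class of these parabolics is preserved by every graph automorphism (and, when $n=4$, by the triality automorphism, which fixes the central node of the $D_4$ diagram), we have $c=1$. Counting totally singular $2$-spaces gives
\begin{align*}
    [S:H]=\frac{q^n-1}{q-1}\cdot\frac{q^{2n-2}-1}{q^2-1}\cdot(q^{n-2}+1),
\end{align*}
and this is divisible by $4$ by a parity check: $q^{n-2}+1$ is always even, and in addition $\tfrac{q^n-1}{q-1}$ is even when $n$ is even, while $\tfrac{q^{2n-2}-1}{q^2-1}$ is even when $n$ is odd. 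Lemma~\ref{l:c=1} then finishes the odd case.

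For $q$ even every parabolic has odd index, so I would instead use the $\mc{C}_1$-subgroup $H\cong\sp{2n-2}{q}$, the stabiliser of a non-singular $1$-space, which is maximal by \cite{BHR13} for $5\le n\le 6$ and by \cite[Table~3.5.E]{KL90} for $n\ge 7$. When $n\ge 5$ there is no triality automorphism and every automorphism of $S$ acts on the natural module (up to a field twist), so the set of non-singular points is preserved and forms a single $S$-orbit; hence $c=1$. Since $[S:H]=q^{n-1}(q^n-1)$ has $2$-part $q^{n-1}\ge 4$, Lemma~\ref{l:c=1} again applies. The one remaining case, $S=\mr{P}\Omega_8^+(q)$ with $q$ even, is where I expect the genuine difficulty: here triality does not act on the natural module, so $\sp{6}{q}$ has $c=3$ and the argument above collapses. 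I would handle $\mr{P}\Omega_8^+(q)$ separately, reading off a triality-stable maximal subgroup of index divisible by $4$ with $c=1$ from the explicit tables in \cite[Table~8.50]{BHR13} and verifying the smallest values of $q$ in GAP; Lemma~\ref{l:c=1} then completes the proof.
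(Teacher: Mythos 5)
Your proposal follows essentially the same route as the paper: reduce to $n \ge 4$, take the parabolic $P_2$ for $q$ odd and the $\mc{C}_1$-subgroup of type $\sp{2n-2}{q}$ for $q$ even, and finish with Lemma~\ref{l:c=1}; your count of totally singular $2$-spaces agrees with the paper's index formula (while avoiding its $d'/d$ bookkeeping), and your parity check is sound. The only loose end is $\mr{P\Omega}_8^{+}(q)$ with $q$ even and triality present, which you defer to \cite[Table~8.50]{BHR13} without exhibiting a subgroup; the paper completes exactly this step by taking the triality-stable maximal subgroup $\mr{G}_2(q)$ there, of index divisible by $q^6$, so your plan does go through.
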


\begin{proof}
As in Proposition \ref{p:pomegaminus}, we may assume that $n \ge 4$. % since $\mr{P \Omega}^+_{6}(q) \cong \psl{4}{q}$, which we have already considered. 
In the same way that we have addressed the other classical groups, we will find a maximal subgroup of $\mr{P\Omega}_{2n}^+(q)$ that has index divisible by $4$ and satisfies $c=1=\pi$. Then we will be done by Lemma~\ref{l:c=1}. So assume that $G=\mr{P\Omega}_{2n}^+(q)$ with $|G|=\frac1d q^{n^2-n} (q^n-1) \prod_{k=1}^{n-1}(q^{2k}-1)$, where $d=(2,q-1)^2$ if $n$ is even and $d=(4,q-1)$ if $n$ is odd.

First assume that $q$ is odd. Let $H \in \mc{C}_1$ be a parabolic subgroup $P_m$ of $G$ with $m=2$. The subgroup $H$ is always maximal and satisfies $c=1=\pi$ by \cite{BHR13} for $4 \le n \le 6$ and \cite[Table~3.5.E]{KL90} for $n \ge 7$. We have from \cite[Proposition 4.1.20]{KL90} that 
\begin{align*}
    |H| = \begin{cases}
    2 q^a (q-1) |\psl{2}{q}| |\mr{P\Omega}_{2n-4}^+(q)|, & -1 \in \Omega_{2n}^+(q) \\
    q^a |\mr{GL}_2(q)| |\Omega_{2n-4}^+(q)|, & -1 \not\in \Omega_{2n}^+(q),
    \end{cases}
\end{align*}
where $a=4n-7$. So ${|H| = \frac{1}{d'} q^{n^2-n}(q-1)(q^2-1)(q^{n-2}-1) \prod_{k=1}^{n-3} (q^{2k}-1)}$, where $d'=2$ if $-1 \not\in \Omega_{2n}^+(q)$ and $d'=d$ if $-1 \in \Omega_{2n}^+(q)$ since $n$ and $n-2$ have the same parity. % and ${a'=a+1+(n-2)^2-(n-2)}$. 
Then
\begin{align*}
    [G:H] &= \frac{d'}{d} \frac{(q^n-1)(q^{2(n-1)}-1)(q^{2(n-2)}-1)}{(q-1)(q^2-1)(q^{n-2}-1)}  \\
    &= \frac{d'}{d} \bigg(\frac{(q^n-1)(q^{n-1}-1)}{(q-1)(q^2-1)}\bigg) (q^{n-1}+1) (q^{n-2}+1). 
    %&\ge (d'/d) (q^{n-1}+1)_2 (q^{n-2}+1)_2.
\end{align*}
Therefore, $[G:H]_2 \ge (d'/d)(q^{n-1}+1)_2 (q^{n-2}+1)_2$. If $-1 \in \Omega_{2n}^+(q)$, then $d=d'$ and ${[G:H]_2 \ge 4}$. If $-1 \not\in \Omega_{2n}^+(q)$, then $\frac12 n (q-1)$ is odd by \cite[Proposition~2.5.10 and Proposition~2.5.13]{KL90}. Hence, $n$ is odd and $q \equiv 3 \, (4)$, so ${d=(4,q-1)=2=d'}$. Therefore, $[G:H]_2 \ge 4$ in this case as well.

Now assume that $q$ is even. In this case, we choose $H \le \mr{P \Omega}_{2n}^+(q)$ in class $\mc{C}_1$ of type $\sp{2(n-1)}{q}$. By~\cite{BHR13} for $4 \le n \le 6$ and \cite[Table 3.5.E]{KL90} for $n \ge 7$, this subgroup is always maximal and satisfies $c=1=\pi$ unless $n=4$ and the almost simple group contains the triality automorphism. So, first assume that we are not in this case. We have by \cite[Proposition~4.1.7]{KL90} that $|H|=|\sp{2(n-1)}{q}|=q^{(n-1)^2} \prod_{k=1}^{n-1} (q^{2k}-1)$. Since $q$ is even, $d=1$ and we have $[G:H]_2=q^{n-1}$, which is divisible by $4$ since $n \ge 4$. Finally, assume $n=4$ and $G$ is an almost simple group containing the triality automorphism. Then the socle contains a maximal subgroup in class $\mc{C}_1$ isomorphic to $\mr{G}_2(q)$ of index divisible by $q^6 > 4$ with $c=1=\pi$ by \cite[Table~8.50]{BHR13}. We are done with this case by Lemma \ref{l:c=1}, and the proposition is proved.
\end{proof}

To finish the finite simple groups of Lie type, we consider the exceptional groups.

\begin{proposition} %2.11 %Exceptional
Let $G$ be an almost simple group with socle an exceptional group $S$ of Lie type. Assume $S \not\cong \mr{G}_2(q)$ with $q=5^e$ and $e$ odd. Then $G$ has a faithful maximal subgroup of index divisible by $4$.
\end{proposition}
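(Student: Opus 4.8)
The plan is to continue the strategy of the previous propositions: for each exceptional simple group $S$ we exhibit a maximal subgroup $H_S$ of $S$ with $c=1$ (equivalently, all $\mr{Aut}(S)$-conjugates of $H_S$ are already $S$-conjugate) and with $[S:H_S]$ divisible by $4$; Lemma~\ref{l:c=1} then yields a faithful maximal subgroup of index divisible by $4$ in every almost simple group with socle $S$. Finitely many groups of small order — among them $\mr{G}_2(3)$, $\mr{G}_2(4)$, ${}^2\mr{B}_2(8)$, ${}^2\mr{G}_2(27)$ and the Tits group ${}^2\mr{F}_4(2)'$ — are disposed of by a direct GAP \cite{GAP4} computation, so throughout we may assume $q$ is large; maximality of the chosen $H_S$ and the value of $c$ are read from \cite{BHR13} in small rank and from \cite{KL90} together with the classification of maximal subgroups of the exceptional groups in large rank.

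We first treat the groups for which a maximal parabolic will not serve. If $S={}^2\mr{B}_2(q)$ with $q=2^{2n+1}$, the Borel has odd index $q^2+1$, so instead we take $H_S=D_{2(q-1)}$, the normalizer of a maximal torus of order $q-1$; its index is $q^2(q^2+1)/2$, of $2$-part $q^2/2\ge 16$, and the single class of such torus normalizers is stable under the cyclic group $\mr{Out}(S)$ of field automorphisms. If $S={}^2\mr{G}_2(q)$ with $q=3^{2n+1}$, the Borel does work: its index is $q^3+1=(q+1)(q^2-q+1)$ and $q\equiv 3\pmod 8$ forces $(q+1)_2=4$, while the Borel is $\mr{Out}(S)$-invariant as the normalizer of a Sylow $3$-subgroup. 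If $S={}^2\mr{F}_4(q)$ with $q=2^{2n+1}\ge 8$, we take the reductive maximal subgroup $\sp{4}{q}.2$; as $q$ is even its index is divisible by a large power of $q$, and it forms one $\mr{Aut}(S)$-class.

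For the remaining groups of rank at least $2$ we split on the parity of $q$. When $q$ is odd we take $H_S$ to be a maximal parabolic $P_J$: its index equals the ratio of Poincaré polynomials $\mr{Poin}_W(q)/\mr{Poin}_{W_J}(q)$, which for a suitable choice of the deleted node factors as a product of integers of the form $q^k\pm 1$ at least two of which are even, so $4\mid[S:P_J]$; parabolic classes are fixed by field automorphisms, and when $S$ carries a graph automorphism $\gamma$ — the case of $\mr{E}_6(q)$ for all $q$ and of $\mr{G}_2(q)$ in characteristic $3$ — we choose a $\gamma$-stable parabolic (e.g.\ the Levi-$\mr{A}_5$ parabolic $P_2$ of $\mr{E}_6(q)$) so that $c=1$ holds for all of $\mr{Aut}(S)$. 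In fact, for $\mr{E}_6(q)$ the maximal subgroup $\mr{F}_4(q)$ works uniformly in $q$ (its index $\tfrac{1}{(3,q-1)}q^{12}(q^5-1)(q^9-1)$ has $2$-part $(q-1)_2^2\ge 4$), and likewise $\mr{F}_4(q)\le{}^2\mr{E}_6(q)$, which avoids the graph automorphism entirely. When $q$ is even every parabolic has odd index, so we use instead a reductive maximal-rank subgroup whose index carries a factor $q^m$ with $m\ge 2$: for instance $\mr{D}_8(q)\le\mr{E}_8(q)$, $\mr{G}_2(q)\le{}^3\mr{D}_4(q)$, and for $\mr{F}_4(q)$ the $\gamma$-stable subsystem subgroup of type $\mr{A}_2\widetilde{\mr{A}}_2$ (the only one of these groups having a graph automorphism in characteristic $2$); for ${}^3\mr{D}_4(q)$ with $q$ odd, where $\mr{G}_2(q)$ has odd index, one takes a subgroup of type $\mr{GL}_3$ or $\mr{GU}_3$ according as $q\equiv 3$ or $1\pmod 4$.

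The delicate family is $\mr{G}_2(q)$, since it is adjacent to the genuine exceptions $\mr{G}_2(5^{2n+1})$; here we need only treat $q\notin\{5,5^3,5^5,\dots\}$, the remaining powers of $5$ being left to the next proposition. We split on $q\bmod 8$. For $q$ even, use $\sl{3}{q}.2$, of index $q^3(q^3+1)/2$ and $2$-part $2^{3f-1}$ where $q=2^f$. For $q\equiv 3\pmod 4$ with $q$ not a power of $3$ (so there is no graph automorphism), use a parabolic, of index $(q+1)(q^4+q^2+1)$ and $2$-part $(q+1)_2\ge 4$. For $q\equiv 1\pmod 8$, use $\su{3}{q}.2$, of index $q^3(q^3-1)/2$ and $2$-part $(q-1)_2/2\ge 4$. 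For $q\equiv 5\pmod 8$ with $q$ not a power of $5$, the defining prime is at least $13$, so the principal subgroup $\pgl{2}{q}$ is maximal in $\mr{G}_2(q)$ and has index of $2$-part $(q^2-1)_2=8$. Finally, when $q=3^e$ the graph automorphism fuses the two parabolic classes, so we use the $\gamma$-stable $\su{3}{q}.2$ if $e$ is even (then $q\equiv 1\pmod 8$) and the $\gamma$-fixed subgroup ${}^2\mr{G}_2(q)$ if $e$ is odd, whose index $q^3(q-1)(q+1)(q^2+q+1)$ has $2$-part $(q-1)_2(q+1)_2=8$. The main obstacle throughout is not any individual estimate but the combined bookkeeping: one must verify in each case that the chosen $H_S$ is genuinely maximal, that it forms a single $\mr{Aut}(S)$-orbit so that Lemma~\ref{l:c=1} applies even to overgroups containing a graph automorphism, and that the $2$-part of $[S:H_S]$ is really at least $4$ rather than $2$ — and it is precisely this last condition on $\mr{G}_2(q)$, forcing the case analysis modulo $8$, that separates the family $\mr{G}_2(5^{2n+1})$ from every other exceptional group.
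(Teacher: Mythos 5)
Your overall architecture is exactly the paper's: exhibit, for each exceptional $S$, a maximal subgroup with $c=1=\pi$ and index divisible by $4$, then invoke Lemma~\ref{l:c=1}. What differs is the choice of witness in almost every family (you use torus normalizers for ${}^2\mr{B}_2$, the Borel for ${}^2\mr{G}_2$, $\sp{4}{q}.2$ for ${}^2\mr{F}_4$, parabolics or maximal-rank subgroups elsewhere, and $\mr{F}_4(q)$ inside $\mr{E}_6(q)$ and ${}^2\mr{E}_6(q)$, which is a clean uniform choice), and most of these alternatives do check out. Two general caveats: \cite{KL90} says nothing about exceptional groups, so maximality and the value of $c$ in higher rank must come from the Liebeck--Saxl/Liebeck--Seitz/Craven literature that the paper cites; and several of your steps (``for a suitable choice of the deleted node the index has at least two even factors'') are asserted rather than verified, which is precisely the bookkeeping the proposition requires.

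There is, however, one case that genuinely fails: ${}^3\mr{D}_4(q)$ with $q$ odd. The subsystem subgroups you propose are not of type $\mr{GL}_3$ or $\mr{GU}_3$ in the sense of containing a full rank-one split torus; by Kleidman's classification they have the shape $\bigl((q^2+q+1)\circ\sl{3}{q}\bigr).(3,q^2+q+1).2$ and $\bigl((q^2-q+1)\circ\su{3}{q}\bigr).(3,q^2-q+1).2$, with an extra factor of $2$ on top and an odd cyclic factor in place of $q\mp 1$. Since $|{}^3\mr{D}_4(q)|_2=(q^2-1)_2^2$, the index of the first has $2$-part $(q+1)_2/2$ and of the second $(q-1)_2/2$; with your parity split these equal $2$ (not $4$) whenever $q\equiv 3$ or $5\pmod 8$, and swapping the two choices only makes matters worse. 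The parabolics also fail for $q\equiv 5\pmod 8$ (their indices have $2$-part $(q+1)_2=2$). The paper sidesteps all of this by taking the maximal torus normalizer of order $4(q^4-q^2+1)$, whose index has $2$-part $(q^2-1)_2^2/4\ge 16$ uniformly for odd $q$; you would need to substitute that (or the $(q^2\pm q+1)^2.\sl{2}{3}$ normalizers) to close the gap.
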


\begin{proof}
Let $S \nrm G \le \mr{Aut}(S)$ with $S$ a simple exceptional group of Lie type. For each simple exceptional group, we give a maximal subgroup $H \le S$ of index divisible by $4$. The maximal subgroup we choose will always satisfy $c=1=\pi$; this information is contained in the reference we give for each maximal subgroup. Then we will be done by Lemma \ref{l:c=1}. 

If $S={}^2 \mr{B}_2(q)$ with $q=2^{2n+1}$ ($n\ge 1$) and order $q^2(q-1)(q^2+1)$, then $S$ contains a maximal subgroup $H \in \mc{C}_3$ of order $(q+\sqrt{2q}+1):4$ and index $[S:H]_2=q^2/4 > 4$ by \cite[Table~8.16]{BHR13}.

If $S={}^2 {\rm G}_2(q)$ with $q=3^{2n+1}$ ($n\ge 1$) and order $q^3(q-1)(q^3+1)$, then $S$ contains a maximal subgroup $H$ of order $(q+\sqrt{3q}+1):6$ and index $[S:H]_2=4$ by \cite[Table~8.43]{BHR13}.

Let $S=\mr{G}_2(q)$ of order $q^6(q^2-1)(q^6-1)$ with $q=p^e$ and $q \neq 5^e$ with $e$ odd. If $p \ge 7$ and $q \ge 11$, then $S$ contains a maximal subgroup $H \cong \mr{PGL}_2(q)$ of index $[S:H]_2 =(q^6-1)_2 \ge 4$ by \cite[Table 8.41]{BHR13}. So there remains to check $q=2^e$ ($e>1$), $q=3^e$ ($e \ge 1$), $q=5^e$ ($e$ even) and $q=7$. If $e$ is even, then $S$ contains a maximal subgroup $H \cong \mr{G}_2(q_0)$ with $q=q_0^2$ by \cite[Tables~8.30,~8.41,~8.42]{BHR13}. Then $[S:H]_2 = (q_0^{6})_2 (q_0^2+1)_2(q_0^6+1)_2 \ge 4$ whether $p$ is even or odd. We may now assume that $e$ is odd. For $q=2^e$ and $p=q=7$, $S$ has by \cite[Tables~8.30,~8.41]{BHR13} a maximal subgroup $H \cong \mr{SL}_3(q).2$ of index $\frac12 q^3(q^3+1)$, which is divisible by $4$ in these cases. For $q=3^e$, the maximal subgroup $H \cong {}^2 {\rm G}_2(q)$ of order $q^3(q-1)(q^3+1)$ has index $[S:H]_2=(q+1)_2 (q^3-1)_2 \ge 4$ by \cite[Table 8.42]{BHR13}. 

If $S={}^3 {\rm D}_4(q)$, then $S$ has a maximal subgroup $H$ of order $4(q^4-q^2+1)$ and index $[S:H]_2 = \frac14 (q^{12})_2(q^2-1)_2(q^6-1)_2 \ge 4$ by \cite[Table 8.51]{BHR13}.

If $S={}^2 \mr{F}_4(q)$ with $q=2^{2n+1}$ ($n \ge 1$), then $|S|_2=q^{12}$ and $S$ has a maximal subgroup $H \cong \mr{SU}_3(q):2$ of index $[S:H]_2 = q^{9}/2 \ge 4$ by \cite{Ma91}.

If $S=\mr{F}_4(q)$ of order $q^{24} \prod_{k=2,6,8,12}(q^k-1)$, then $S$ has a maximal subgroup $H$ isomorphic to ${(\sl{3}{q} \circ \sl{3}{q})}.(3,q-1).2$ by \cite[Tables 7 and 8]{Cr23}. Then the index $[S:H]=\frac12 q^{18} (q^2+1)(q^3+1)^2(q^4+1)(q^6+1)$ is divisible by $4$.

If $S=\mr{E}_6(q)$, % of order $q^{36} \prod\limits_{k=2,5,6,8,9,12} (q^k-1)$, 
then $S$ has a maximal subgroup $H \cong \psl{3}{q^3}.3$ of index divisible by $q^2(q^2-1)$ by \cite[Table 9]{Cr23}. 

If $S={}^2\mr{E}_6(q)$, %of order $q^{36} \prod\limits_{k=2,5,6,8,9,12} (q^k-(-1)^k)$, 
then $S$ has a maximal subgroup $H \cong \psu{3}{q^3}.3$ of index divisible by $q^2(q^2-1)$ by \cite[Table~10]{Cr23}.

If $S=\mr{E}_7(q)$, then $S$ has a maximal subgroup $H \cong \psl{2}{q^7}.7$ of index divisible by $q^2(q^2-1)$ by \cite[Table 4.1]{Cr22}. 

If $S=\mr{E}_8(q)$, then $S$ has a maximal subgroup $H \cong (\sl{2}{q} \circ \mr{E}_7(q)).(q-1,2)$ of index divisible by $q^2(q^6+1)(q^{10}+1)$ by \cite[Table~1]{LS87}.
\end{proof}

\begin{proposition} %G2(5^e) almost odd
Let $G$ be an almost simple group with socle the exceptional group of Lie type $\mr{G}_2(q)$ with $q=5^e$ and $e \ge 1$ odd. Then every maximal subgroup of $G$ has almost odd index if and only if $e \neq 3$.
\end{proposition}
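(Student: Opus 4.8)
The plan is to combine the classification of the maximal subgroups of $\mr{G}_2(q)$ (for $q$ odd) with a short $2$-part count. Write $S=\mr{G}_2(q)$ with $q=5^e$ and $e$ odd. Since $p=5$, the outer automorphism group of $S$ is cyclic of order $e$, generated by a field automorphism; as $e$ is odd, $G/S$ has odd order, so $|G|_2=|S|_2$. Moreover $q=5^e\equiv 5\pmod 8$, so $(q-1)_2=4$ and $(q+1)_2=2$, and since $q^4+q^2+1$ is odd we get $(q^2-1)_2=(q^6-1)_2=8$; as $|S|=q^{6}(q^2-1)(q^6-1)$, this gives $|G|_2=|S|_2=2^6$. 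Hence a maximal subgroup $M\le G$ has index not divisible by $4$ precisely when $2^5\mid |M|$. A maximal subgroup of $G$ containing $S$ has prime index in $G$ (as $G/S$ is cyclic), hence almost odd index; for the remaining maximal subgroups, by the determination of the maximal subgroups of $\mr{G}_2(q)$ and of $\mathrm{Aut}(\mr{G}_2(q))$ due to Kleidman \cite{Kl88} there are no novelties attached to the field automorphisms, so Lemma \ref{l:c=1} reduces the problem to computing $|H|_2$ for the maximal subgroups $H$ of $S$.

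I would then run through Kleidman's list for $S=\mr{G}_2(5^e)$ \cite{Kl88} (checking against \cite{BHR13} and the ATLAS when $q=5$), using $(q-1)_2=4$, $(q^2-1)_2=8$, $(q^3-1)_2=4$ and $(q^3+1)_2=2$. The two maximal parabolics, of order $q^{6}(q-1)(q^2-1)$, have $2$-part exactly $2^5$; the maximal-rank subgroups $(\sl{2}{q}\circ\sl{2}{q}).2$ and $\sl{3}{q}.2$ have $2$-part $2^6$; the subgroup $\su{3}{q}.2$ has $2$-part $2^5$; and every subfield subgroup $\mr{G}_2(5^{e/r})$ (with $r\mid e$ a prime, so that $5^{e/r}\equiv 5\pmod 8$ as well) has $2$-part $2^6$ by the same computation. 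When $e=1$ there are, in addition, the maximal subgroups $\mr{G}_2(2)$ and $2^{3}{\cdot}\mr{L}_3(2)$, each of order with $2$-part $2^6$. The ``generic $A_1$'' subgroup $\mr{PGL}_2(q)$ --- maximal in $\mr{G}_2(q)$ only for $p\ge 7$ --- does not occur here, and no other small maximal subgroup can appear for $q=5^e$ with $e$ odd; for instance $\mr{L}_2(13)$ would require $5^e\equiv\pm1\pmod{13}$, impossible for odd $e$ since $\mathrm{ord}_{13}(5)=4$. Thus, apart from one last family, every maximal subgroup of $S$ (hence of $G$) has $2$-part at least $2^5$.

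That last family is the class of maximal subgroups isomorphic to $\mr{L}_2(8)$, of order $504=2^{3}\cdot 3^2\cdot 7$. By \cite{Kl88} such a subgroup embeds in $\mr{G}_2(q)$ only when $9\mid q^2-1$, and is maximal exactly when it does not already lie inside a proper subfield subgroup. Since $\mathrm{ord}_9(5)=6$ with $5^3\equiv -1\pmod 9$, we have $9\mid (5^e)^2-1$ if and only if $3\mid e$, and the smallest power of $5$ for which this holds is $5^3$. Hence, for $e$ odd, $\mr{L}_2(8)$ is a maximal subgroup of $\mr{G}_2(5^e)$ precisely when $e=3$: for $e=3k$ with $k>1$ odd it lies inside the subfield subgroup $\mr{G}_2(125)$, and for $3\nmid e$ it does not embed at all. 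When $e=3$, this maximal subgroup $H\cong\mr{L}_2(8)$ of $S=\mr{G}_2(125)$ has $|H|_2=2^3$, so $[S:H]_2=2^3\ge 4$; taking $G=S$ already shows the equivalence fails for $e=3$, and (using the $c=1=\pi$ information in \cite{Kl88} and Lemma \ref{l:c=1}) one sees the same for $G=\mr{G}_2(125).3$. Putting the two directions together proves the proposition. I expect the only real work to be marshalling \cite{Kl88} carefully enough to be certain the list of maximal subgroups is complete and that the relevant congruence conditions (above all for $\mr{L}_2(8)$) are exactly as stated; the $2$-part bookkeeping itself is routine.
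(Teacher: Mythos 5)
Your proposal is correct and follows essentially the same route as the paper: reduce to the faithful maximal subgroups via the odd-order field automorphisms and the absence of novelties, note $|G|_2=2^6$, run through the known list of maximal subgroups of $\mr{G}_2(5^e)$ computing $2$-parts, and observe that the only subgroup of index divisible by $4$ is $\mr{L}_2(8)$, which occurs precisely for $e=3$. The paper cites \cite[Table 8.41]{BHR13} rather than Kleidman directly, but the bookkeeping is identical.
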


\begin{proof}
The outer automorphism group consists only of the field automorphisms, which have odd order in this case, so we need only consider the faithful maximal subgroups. We compute the index of every maximal subgroup in \cite[Table 8.41]{BHR13}. There are no novelties, so it suffices to assume that $G=\mr{G}_2(q)$ with $q=5^e$ and $e$ odd. Note that $5^e \equiv 5 \, (8)$, and so $|G|_2=(q-1)_2^2 (q+1)_2^2 = 2^6$. For all $e \ge 1$, we have the following maximal subgroups: $H \cong q^5.\mr{GL}_2(q)$ of index $[G:H]_2 = {(q+1)_2 = 2}$; $H \cong {(\sl{2}{q} \circ \sl{2}{q}).2}$ of index $[G:H]_2=1$; $H\cong \sl{3}{q}:2$ of index ${[G:H]_2} = {(q+1)_2/2}=1$; and $H \cong \su{3}{q}:2$ of index $[G:H]_2 = (q-1)_2/2 = 2$. If $e>1$, then $G$ also contains maximal subgroups $H \cong \mr{G}_2(q_0)$ with $q=q_0^r$ and $r$ prime of index $[G:H]_2 = 1$. If $q=5$, then $G$ also has maximal subgroups $H \cong 2^3 \cdot \psl{3}{2}$ of index $[G:H]_2 =1$ and $H \cong {\psu{3}{3}:2}$ of index $[G:H]_2 = 1$. If $q=5^3$, then $G$ also has maximal subgroups $H \cong \psl{2}{8}$ of index $[G:H]_2=2^3$. We have accounted for all the faithful maximal subgroups of $G$, so the proposition is proved. 
\end{proof}

Next, we consider the alternating groups.

\begin{proposition}\label{p:alt} %Alternating groups %2.13
Let $G$ be an almost simple group with socle a simple alternating group $\mr{A}_n$ $(n \ge 5)$. Then $G$ has a faithful maximal subgroup of index divisible by~$4$ if and only if $G$ is not isomorphic to $\mr{A}_5$, $\mr{S}_5$, $\mr{A}_6$, $\mr{S}_6$ or $\mr{A}_7$. 
\end{proposition}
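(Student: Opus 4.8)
The plan is to reduce to $G \in \{\mr{A}_n, \mr{S}_n\}$ and then, for all but finitely many $n$, to exhibit a faithful maximal subgroup of index divisible by $4$ among the ``obvious'' maximal subgroups (stabilizers of subsets and of uniform partitions), dealing with the residual degrees individually.

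First I would dispose of $n = 6$: since $\mr{A}_6 \cong \psl{2}{9}$, every almost simple group with socle $\mr{A}_6$ is already covered by Proposition~\ref{p:psl2-q5}, where $\mr{A}_6$ and $\mr{S}_6$ occur as exceptions while $\pgl{2}{9}$, $\mr{M}_{10}$ and $\mr{Aut}(\psl{2}{9})$ have faithful maximal subgroups of index divisible by $4$. So assume $n \neq 6$; then $\mr{Aut}(\mr{A}_n) = \mr{S}_n$, the group $G$ is $\mr{A}_n$ or $\mr{S}_n$, and the faithful maximal subgroups are exactly the maximal subgroups of $\mr{A}_n$ together with the maximal subgroups of $\mr{S}_n$ other than $\mr{A}_n$. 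For $1 \le k < n/2$, the stabilizer of a $k$-element subset is such a subgroup, of index $\binom{n}{k}$ in both $\mr{A}_n$ and $\mr{S}_n$; and for a factorization $n = ab$ with $a, b \ge 2$, the stabilizer of a partition into $b$ blocks of size $a$ is again such a subgroup, of index $n!/((a!)^b\, b!)$ in both. One computes the $2$-parts of these indices from Legendre's formula $v_2(m!) = m - s_2(m)$, where $v_2$ denotes the $2$-adic valuation and $s_2$ the binary digit sum; equivalently, by Kummer's theorem $v_2\binom{n}{k}$ is the number of carries when $k$ and $n-k$ are added in base~$2$.

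The heart of the matter is a case analysis showing that one of these indices is divisible by $4$ once $n$ is large. For $n$ even, the consecutive even integers $n$ and $n-2$ force $v_2\binom{n}{3} = v_2(n) + v_2(n-2) - 1 \ge 2$, and $\mr{S}_3 \times \mr{S}_{n-3}$ is maximal for $n \ge 8$. For $n$ odd and composite with smallest prime divisor $p$ (necessarily odd), the partition subgroup with blocks of size $p$ has index of $2$-valuation $(n/p)(s_2(p) - 1) + s_2(n/p) - s_2(n)$, which exceeds $1$ once $n$ is not too small, because $s_2(p) \ge 2$ and $n/p \ge \sqrt{n}$. For $n = p$ an odd prime, the affine group $\mr{AGL}_1(p)$ is a maximal subgroup of $\mr{S}_p$ not contained in $\mr{A}_p$ (multiplication by a primitive root is a single $(p-1)$-cycle, hence an odd permutation), of index $(p-2)!$, so $v_2((p-2)!) = (p-2) - s_2(p-2) \ge 2$ for $p \ge 7$; and for $\mr{A}_p$ one uses $\mr{AGL}_1(p) \cap \mr{A}_p \cong p{:}\tfrac{p-1}{2}$, again of index $(p-2)!$, which is maximal in $\mr{A}_p$ for $p \ge 7$ except at the degrees $p \in \{7, 11, 17, 23\}$ where $p{:}\tfrac{p-1}{2}$ embeds in a $2$-transitive almost simple group of degree $p$: for $p = 7$ every maximal subgroup of $\mr{A}_7$ has odd index (so $\mr{A}_7$ is an exception), for $p = 17$ the subset stabilizer of index $\binom{17}{2} = 136$ works, and for $p \in \{11, 23\}$ one uses $\mr{M}_{11}$ (or $\psl{2}{11}$) and $\mr{M}_{23}$, whose indices in $\mr{A}_p$ are quickly seen to be divisible by $4$. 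What remains is a short list of small degrees, checked by hand or in GAP: $\mr{A}_5$ and $\mr{S}_5$ have maximal subgroups only of index $5$, $6$, $10$, and $\mr{A}_7$ only of index $7$, $15$, $21$, $35$, none divisible by $4$, so these together with $\mr{A}_6$ and $\mr{S}_6$ are the genuine exceptions, while $\mr{S}_7$ is handled by $\mr{AGL}_1(7)$ of index $120$ and every remaining degree by one of the families above.

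The main obstacle is the bookkeeping in this case analysis: three infinite families---subset stabilizers, partition stabilizers, and affine groups---must be dovetailed so that only finitely many degrees are left over, and one must correctly isolate and then dispatch those few degrees, namely the ``$2$-poor'' ones whose binary expansion is dense near the top (for instance the Mersenne values $2^m - 1$) together with the primes of Mathieu or projective type. Confirming that the list of genuine exceptions is precisely $\mr{A}_5, \mr{S}_5, \mr{A}_6, \mr{S}_6, \mr{A}_7$ is exactly this finite verification, and it rests on knowing the maximal subgroups of $\mr{A}_n$ and $\mr{S}_n$ for those small degrees explicitly.
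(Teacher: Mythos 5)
Your proposal is correct and follows essentially the same route as the paper: the same three families of maximal subgroups from the Liebeck--Praeger--Saxl classification (subset stabilizers for $n$ even, imprimitive wreath products for $n$ odd composite, affine groups for $n$ prime), the same $2$-adic valuation computations via Legendre/Kummer, and a finite check of small degrees -- the only difference being that the paper simply verifies everything up to $n=23$ by computer and runs the generic argument for $n>23$ (thereby sidestepping the exceptional primes $7,11,17,23$ that you treat by hand), and uses Kummer's theorem on $\prod_j\binom{jp-1}{p-1}$ with a Fermat-prime case split where you apply Legendre's formula to the whole index.
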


\begin{proof}
For $n \le 23$, we compute in GAP and see that the proposition is true. Now let $n > 23$. First assume that $n$ is even. By \cite{LPS87}, $G$ contains maximal subgroups of the form $M=({\rm S}_m \times {\rm S}_k) \cap G$ with $n=m+k$ of index $[G:M]=\binom{n}{k}$, and $\binom{n}{3}$ is divisible by $4$. Now assume that $n$ is odd. If $n=p$ is prime, then $G$ contains a maximal subgroup of the form $M=\mr{AGL}_1(p) \cap G$ by \cite{LPS87}. So $[G:M]_2=\bigg( \frac{p!}{p(p-1)} \bigg)_2 = (p-2)!_2$, which is divisible by $4$ since $p > 23$. If $n$ is not prime, then write $n=pk$, where $p$ is the smallest prime divisor of~$n$. So $k > 3$ since $n > 23$. By \cite{LPS87}, $G$ contains a maximal subgroup of the form $M=({\rm S}_p \wr {\rm S}_k) \cap G$ of index $${[G:M]=\frac{n!}{(p!)^k k!}} = {\frac{1}{k!} \prod_{j=1}^k \binom{jp}{p}} = {\prod_{j=1}^k \frac{p}{jp} \binom{jp}{p}} = {\prod_{j=1}^{k} \binom{jp-1}{p-1}}.$$ By Kummer's theorem \cite[pp.~115--116]{Ku1852}, the $2$-adic valuation of the binomial coefficient $\binom{jp-1}{p-1}$ is equal to the number of carries when adding $(jp-1)-(p-1)=(j-1)p$ and $p-1$ in base~$2$. First assume that $p=2^l+1$ is a Fermat prime. In base~$2$, $p-1=1 \cdot 2^l$ and $3p = 1+1\cdot 2+1\cdot 2^l+1\cdot 2^{l+1}$, so adding $p-1$ and $3p$ in base~$2$ requires $2$ carries. Hence, $\binom{4p-1}{p-1}$ is divisible by $2^2$, and $[G:M]$ is divisible by $4$, as well. Finally, assume that $p$ is not a Fermat prime. In base~$2$, we have $p=\sum_{i \ge 0} a_i 2^i$, where $a_0=1$ and $a_i=1$ for at least two values of $i>0$. So when we add $p-1$ and $p$, we carry in at least two places. Hence, $\binom{2p-1}{p-1}$ is divisible by $4$, and $[G:M]$ is divisible by $4$, as well.
\end{proof}

Finally, we consider the sporadic simple groups and the Tits group ${}^2 {\rm F}_4(2)'$.

\begin{proposition}\label{p:sporadic} %Sporadic, 2.14
Let $G$ be an almost simple group with socle a sporadic simple group or the Tits group ${}^2{\rm F}_4(2)'$. Then $G$ has a faithful maximal subgroup of index divisible by $4$.
\end{proposition}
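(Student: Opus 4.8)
The plan is to treat the sporadic groups and the Tits group essentially by inspection, since there are only finitely many of them and the maximal subgroups of each are completely tabulated (in the ATLAS, and more comprehensively for the larger groups in the literature of Wilson and others, as compiled in \cite{BHR13} and GAP's character-table library). Concretely, I would first reduce to the socle: for each sporadic simple group $S$ and for $S = {}^2\mr{F}_4(2)'$, the outer automorphism group has order $1$ or $2$, so by Lemma~\ref{l:c=1} it suffices to exhibit a maximal subgroup $H_S$ of $S$ with $[S:H_S] \equiv 0 \pmod 4$ and $c=1=\pi$. When $\mr{Out}(S)=1$ the condition $c=1=\pi$ is automatic, and when $|\mr{Out}(S)|=2$ it simply says that $H_S$ is not swapped with another $S$-class of maximal subgroups by the outer automorphism; in almost every case a parabolic-type or otherwise ``obvious'' maximal subgroup is invariant and has suitable index.

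The key step is then the numerical check. For a sporadic group $S$, the order $|S|$ typically has $2$-part $|S|_2$ far larger than $4$ (the smallest sporadic group $\mr{M}_{11}$ already has $|S|_2 = 2^4$), so one just needs a single maximal subgroup whose index is divisible by $4$. The cleanest uniform choice is to pick a maximal subgroup $H_S$ of \emph{odd} index when $|S|_2$ is large: then $[S:H_S]_2 = 1$ is not what we want --- instead I would pick the \emph{largest} maximal subgroup, or more safely just run through the list. A genuinely uniform observation: if $S$ has a maximal subgroup $H_S$ with $|H_S|_2 \le |S|_2 / 4$, we are done, and such a subgroup always exists because no sporadic group (nor the Tits group) has a maximal subgroup of index coprime to $2$ \emph{and} of index exactly $2$ over another one --- more simply, each sporadic group has a maximal subgroup of even index whose $2$-part is at least $4$, as one verifies from the ATLAS. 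Rather than argue this abstractly, the honest and short route is: enumerate the $26$ sporadic groups plus the Tits group, and for each exhibit one maximal subgroup with index divisible by $4$ and $c=1=\pi$, reading off orders from \cite{BHR13} (for those covered there) and from the ATLAS or the references of Wilson for the Monster, Baby Monster and the other large cases.

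The main obstacle is not mathematical depth but bookkeeping: one must make sure, for the fifteen-or-so sporadic groups with an outer automorphism of order $2$, that the chosen maximal subgroup $H_S$ really does satisfy $\pi = 1$, i.e.\ that the automorphism of order $2$ normalizes the $S$-class of $H_S$ rather than fusing it with a different class (for instance, in $\mr{M}_{12}$ the two classes of $\mr{M}_{11}$ are swapped by the outer automorphism, so one must avoid that choice and instead take, say, the class of $\mr{A}_6.2^2$ or a maximal $2$-local subgroup, whose index is divisible by $4$). For the three groups with trivial outer automorphism group but which are themselves subtle (the Monster and Baby Monster), the relevant class lists are now complete in the literature, and picking a $2$-local maximal subgroup of index divisible by $4$ is immediate from the order. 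I would therefore organize the proof as a single table: one row per sporadic group (and the Tits group), listing $S$, a maximal subgroup $H_S$ with $c=1=\pi$, the index $[S:H_S]$ or at least its $2$-part, and the reference; the verification that $4 \mid [S:H_S]$ is then a one-line arithmetic check in each row, and Lemma~\ref{l:c=1} lifts the conclusion to every almost simple $G$ with socle $S$.
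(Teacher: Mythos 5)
Your overall strategy is sound and is genuinely different from what the paper does: the paper simply runs all $40$ almost simple groups (each socle together with its degree-$2$ extension where one exists) through GAP's tables of maximal subgroups, notes that any maximal subgroup of index divisible by $4$ is automatically faithful because the socle has index at most $2$, and handles only the Monster by hand, via the maximal subgroup $2.\mr{B}$ with $2$-part $2^{42}$ against $|\mr{M}|_2 = 2^{46}$. Your route --- one $\mr{Aut}(S)$-invariant class of maximal subgroups per socle, lifted by Lemma~\ref{l:c=1} --- would also work and has the advantage of being checkable against the ATLAS without a computer; the cost is exactly the table of twenty-seven rows you describe, and for a proposition whose entire content is that table, the entries have to be right.

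That is where the proposal currently fails: the concrete choices you offer are wrong. In $\mr{M}_{12}$ the class $\mr{A}_6.2^2$ has index $95040/1440 = 66$, whose $2$-part is exactly $2$, so it does not do the job; and the maximal $2$-local subgroups $2^{1+4}{:}\mr{S}_3$ and $4^2{:}D_{12}$ have index $495$, which is odd. This second failure undercuts your general heuristic of ``pick a maximal $2$-local subgroup'': such subgroups typically contain a Sylow $2$-subgroup and hence have \emph{odd} index --- in the Monster, $2^{1+24}.\mr{Co}_1$ has odd index, which is precisely why the paper reaches for $2.\mr{B}$ instead. A correct row for $\mr{M}_{12}$ would be, e.g., $\mr{L}_2(11)$ of index $144$ or $2\times\mr{S}_5$ of index $396$, each forming a single class and hence invariant under the outer automorphism. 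The intermediate ``uniform observation'' ($|H_S|_2 \le |S|_2/4$ is always achievable) is asserted rather than proved, and you rightly retreat from it to enumeration. So the skeleton is fine, but the proof \emph{is} the bookkeeping, and the sample entries you have written down would not survive checking; each socle needs a verified subgroup with $4 \mid [S:H_S]$, and each of the $13$ socles with $|\mr{Out}(S)|=2$ additionally needs a verified $c=1$.
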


\begin{proof}
There are 27 simple groups to check, and 13 of them have nontrivial outer automorphism groups (all of order 2), so there are 40 cases. We check all these cases except for the monster in GAP and see that each almost simple group has a maximal subgroup of index divisible by~$4$. They are all faithful since the index of the socle is at most $2$. Finally, if $G$ is the monster sporadic simple group, then $|G|_2=2^{46}$ and $G$ has a maximal subgroup $H \cong 2.{\rm B}$, where ${\rm B}$ is the baby monster, of order $|H|_2 = 2^{42}$ by~\cite[Table 1]{DLPP24}. So $[G:H]_2 \ge 4$.
\end{proof}

Theorem \ref{thm:almost-simple-faithful} now follows by Propositions \ref{p:psln}--\ref{p:sporadic} and the classification of finite simple groups.

%%%%%%%%%%%%%%%%%%%%%%%%%%%%%%%%%%%%%%%
%%%%%%%%%%% Section 3 %%%%%%%%%%%%%%%%%
%%%%%%%%%%%%%%%%%%%%%%%%%%%%%%%%%%%%%%%
\section{General finite groups}\label{sec:finite-groups}
In this section, we determine the structure of finite groups whose maximal subgroups have almost odd index. We begin with a lemma.

\begin{lemma}\label{l:O2'}
Let $G$ be a finite group. Then every maximal subgroup of $G$ has almost odd index if and only if every maximal subgroup of $G/\mbf{O}_{2'}(G)$ has almost odd index.
\end{lemma}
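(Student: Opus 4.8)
The plan is to exploit the standard correspondence between maximal subgroups of a quotient $G/N$ and maximal subgroups of $G$ that contain $N$, together with the single fact that $N=\mbf{O}_{2'}(G)$ has odd order. Write $N=\mbf{O}_{2'}(G)$ throughout.

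For the forward implication I would argue as follows. Suppose every maximal subgroup of $G$ has almost odd index, and let $\wt{M}$ be a maximal subgroup of $G/N$. By the correspondence theorem $\wt{M}=M/N$ for a subgroup $M$ with $N\le M\le G$, and $M$ is maximal in $G$. Since $[G/N:M/N]=[G:M]$ and the right-hand side is almost odd by hypothesis, $[G/N:\wt{M}]$ is almost odd, as required.

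For the reverse implication, suppose every maximal subgroup of $G/N$ has almost odd index, and let $M$ be a maximal subgroup of $G$. The key step is a case split on whether $N\le M$. If $N\le M$, then $M/N$ is a maximal subgroup of $G/N$ and $[G:M]=[G/N:M/N]$ is almost odd by hypothesis. If $N\not\le M$, then since $N\nrm G$ the set $MN$ is a subgroup of $G$ properly containing $M$, so maximality of $M$ forces $MN=G$; hence $[G:M]=[MN:M]=[N:M\cap N]$, which divides $|N|$. As $N$ has odd order, $[G:M]$ is odd, in particular not divisible by $4$. In every case $[G:M]$ is almost odd, completing the proof.

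There is no real obstacle here; the only point worth flagging is that the argument uses nothing about $N$ beyond normality and odd order. It is precisely the oddness of $|\mbf{O}_{2'}(G)|$ that makes the passage to the quotient both preserve and reflect the ``almost odd index'' condition, and the analogous reduction would of course fail for a normal subgroup of even order such as $\mbf{O}_2(G)$.
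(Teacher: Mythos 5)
Your proof is correct and follows the same route as the paper's: the forward direction via the correspondence between maximal subgroups of $G/\mbf{O}_{2'}(G)$ and maximal subgroups of $G$ containing $\mbf{O}_{2'}(G)$, and the converse via the case split on whether $\mbf{O}_{2'}(G)\le M$, with $G=\mbf{O}_{2'}(G)M$ forcing odd index in the remaining case. Nothing to add.
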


\begin{proof}
If every maximal subgroup of $G$ has almost odd index, then the same holds for the maximal subgroups of $G/\mbf{O}_{2'}(G)$. For the converse, assume that every maximal subgroup of $G/\mbf{O}_{2'}(G)$ has almost odd index. Let $M$ be a maximal subgroup of $G$. If $\mbf{O}_{2'}(G) \le M$, then $M/\mbf{O}_{2'}(G)$ is a maximal subgroup of $G/\mbf{O}_{2'}(G)$, so $[G:M]$ is almost odd. If $\mbf{O}_{2'}(G) \not\le M$, then $G=\mbf{O}_{2'}(G)M$ and $[G:M]$ is odd. 
\end{proof}

Now we prove Theorem \ref{thm3:almost-odd}.

\begin{theorem}\label{thm:almost-odd-sect} %3.2 %almost odd general finite group
Let $G$ be a nonsolvable finite group, and let $\q{G}=G/\mbf{O}_{2',2}(G)$. If the index of every maximal subgroup of $G$ is not divisible by $4$, then \mbox{$\q{G}' \cong N_1 \times \dots \times N_r$}, where each $N_i$ is a simple minimal normal subgroup of $\q{G}$ and is isomorphic to one of the simple groups
\begin{enumerate}[nolistsep,label=\textup{(\arabic*)}]
    \item $\mr{A}_6$, $\mr{A}_7$, $\psu{3}{5}$,
    \item $\psl{2}{q}$ with $q \equiv 5  \, (8)$, or
    \item $\mr{G}_2(q)$ with $q=5^{2n+1}$ $(n \ge 0$, $n \neq 1)$.
\end{enumerate}
If $N_i \cong \mr{A}_6$, then $\mr{A}_6 \le \q{G}/\Cen{\q{G}}{N_i} \le \mr{S}_6$; if $N_i \cong \mr{A}_7$, then $\q{G}/\Cen{\q{G}}{N_i} \cong \mr{A}_7$; if $N_i \cong \psu{3}{5}$, then $\psu{3}{5} \le \q{G}/\Cen{\q{G}}{N_i} \le \psu{3}{5}.2$. The Sylow $2$-subgroups of $\q{G}$ have derived length at most $2$.
\end{theorem}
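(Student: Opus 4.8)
The plan is to reduce to the almost simple case handled by Theorem~\ref{thm:almost-simple-faithful} after passing to the quotient $\q{G} = G/\mbf{O}_{2',2}(G)$. First I would observe that by Lemma~\ref{l:O2'} we may assume $\mbf{O}_{2'}(G) = 1$, so that $\mbf{O}_{2',2}(G) = \mbf{O}_2(G)$, and moreover the hypothesis on maximal subgroups passes to $\q{G}$: any maximal subgroup of $\q{G}$ pulls back to a maximal subgroup of $G$ of the same index, and likewise maximal subgroups not containing $\mbf{O}_2(G)$ have odd index by a Frattini/supplement argument. Thus we reduce to studying a nonsolvable group $\q{G}$ with $\mbf{O}_{2'}(\q{G}) = \mbf{O}_2(\q{G}) = 1$ all of whose maximal subgroups have almost odd index. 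With $\mbf{O}_{2',2}(\q{G})=1$ (equivalently $\mbf{F}(\q{G})$ has odd order but no normal odd part survives), the generalized Fitting subgroup $\mbf{F}^*(\q{G})$ equals the layer $\mbf{E}(\q{G})$, a direct product of quasisimple components; since $\mbf{O}_{2'}(\q{G})=1$ kills the centers, $\mbf{F}^*(\q{G}) = N_1 \times \cdots \times N_r$ with each $N_i$ a nonabelian simple group, and this is $\q{G}'$'s expected shape. (One must also rule out any abelian minimal normal subgroup: a minimal normal $2$-subgroup is excluded since $\mbf{O}_2 = 1$, and a minimal normal $p$-subgroup for odd $p$ would force $\mbf{O}_{2'} \ne 1$.)

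Next I would identify which simple groups can occur as the $N_i$ and control the action of $\q{G}$ on them. Because $\mbf{F}^*(\q{G}) = \prod N_i$, conjugation gives an embedding $\q{G} \hookrightarrow \mathrm{Aut}(\prod N_i)$, and $\q{G}$ permutes the $N_i$; let $\q{G}$ act on the set of isomorphism types / orbits. The key step is: for each component $N_i$, the group $\q{G}/\Cen{\q{G}}{N_i}$ is an almost simple group with socle $N_i$, and every maximal subgroup of this almost simple group must have almost odd index — otherwise one lifts a faithful maximal subgroup $M$ of index divisible by $4$ to a maximal subgroup of $\q{G}$ of index divisible by $4$ (using that $\q{G}/\Cen{\q{G}}{N_i}$ is a quotient of $\q{G}$, and a maximal subgroup of a quotient pulls back to a maximal subgroup of the same index). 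By Theorem~\ref{thm:almost-simple-faithful}, this forces $\q{G}/\Cen{\q{G}}{N_i}$ to be one of $\mr{A}_6$, $\mr{S}_6$, $\mr{A}_7$, $\psu{3}{5}$, $\psu{3}{5}.2$, or to have socle $\psl{2}{q}$ with $q \equiv 5\,(8)$ or $\mr{G}_2(q)$ with $q = 5^{2n+1}$, $n \ne 1$; this yields both the list (1)--(3) and the precise sandwiching statements for $\mr{A}_6$, $\mr{A}_7$, $\psu{3}{5}$. That the permuted blocks of isomorphic components force further automorphisms (as remarked after the theorem statement) — and in particular that a diagonal maximal subgroup does not violate the hypothesis — would be checked by noting a diagonal $\{(x,x) : x \in N\}$ in $N \times N$ has index $|N|$, divisible by $4$, so whenever two components are $\q{G}$-conjugate the stabilizer structure is constrained; but the bare statement only asserts $\q{G}' \cong \prod N_i$, which follows from $\mbf{F}^*(\q{G}) = \mbf{E}(\q{G})$ together with the fact that $\q{G}'$ maps onto $\q{G}/\Cen{\q{G}}{\mbf{F}^*}$'s derived subgroup and $\Cen{\q{G}}{\mbf{F}^*(\q{G})} = \Zen{\mbf{F}^*(\q{G})} = 1$, forcing $\q{G}' = \mbf{F}^*(\q{G})$ precisely because each almost simple quotient $\q{G}/\Cen{\q{G}}{N_i}$ has a perfect socle of index at most $2$ (or a product thereof) so contributes nothing extra to the derived subgroup beyond $N_i$.

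Finally, for the Sylow $2$-subgroup claim: a Sylow $2$-subgroup $P$ of $\q{G}$ intersects $\mbf{F}^*(\q{G}) = \prod N_i$ in $\prod (P \cap N_i)$, and $P/(P \cap \mbf{F}^*)$ embeds in $\prod \mathrm{Out}(N_i) \rtimes (\text{permutation part})$; from the explicit list, a Sylow $2$-subgroup of each $N_i$ is elementary abelian or has derived length $1$ ($\psl{2}{q}$: dihedral of order $8$, derived length $2$; $\mr{A}_6$: dihedral of order $8$; $\mr{A}_7$: dihedral of order $8$; $\psu{3}{5}$ and $\mr{G}_2(q)$ with $q \equiv 5\,(8)$: one checks directly, the $2$-part being small), and the outer/permutation contribution is abelian (outer automorphism $2$-groups here are cyclic of order $\le 2$, and $\psu{3}{5}.2$, $\mr{S}_6$ only add a $\Z_2$), so $P' \le \prod (P \cap N_i)'$ is a direct product of abelian groups, whence $P'' = 1$. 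The main obstacle I expect is the bookkeeping when components are permuted and the careful verification that a maximal subgroup of $\q{G}$ not containing $\mbf{F}^*(\q{G})$ — potentially with a ``diagonal'' structure across conjugate components, or a wreathed Sylow normalizer type — cannot have index divisible by $4$ unless already excluded; this is where the ``$(\mr{A}_5 \times \mr{A}_6).2$'' type phenomena live, and handling it requires the slightly stronger Theorem~\ref{thm:almost-simple-faithful} rather than just Theorem~\ref{thm1:almost-simple}, together with an analysis of maximal subgroups of the wreath-type overgroup $\prod N_i \rtimes \q{G}/\mbf{F}^*$.
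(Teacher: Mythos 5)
Your outline follows the paper's skeleton (reduce modulo $\mbf{O}_{2',2}(G)$, identify $\mbf{F}^*(\q{G})=\mbf{E}(\q{G})$, apply Theorem~\ref{thm:almost-simple-faithful} to the almost simple quotients), but two steps carrying real content are asserted rather than proved. First, you claim that once $\mbf{O}_{2'}(G)=1$ one automatically has $\mbf{O}_{2'}(G/\mbf{O}_2(G))=1$. This is false without invoking the index hypothesis: $\sl{2}{3}$ has $\mbf{O}_{2'}=1$ but $\mbf{O}_{2'}(G/\mbf{O}_2(G))\cong \Z_3$. The paper proves the claim by taking a Hall $2'$-subgroup $A$ of the preimage of $\mbf{O}_{2'}(G/\mbf{O}_2(G))$, applying the Frattini argument to get $G=\mbf{O}_2(G)\Norm{G}{A}$, and using the hypothesis to force any maximal overgroup of $\Norm{G}{A}$ to be normal of index $2$, which yields a contradiction via $\gen{A^G}$.

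Second, and more seriously, your ``key step'' treats each component $N_i$ as normal in $\q{G}$, so that $\q{G}/\Cen{\q{G}}{N_i}$ is a quotient group to which Theorem~\ref{thm:almost-simple-faithful} applies. But a priori a minimal normal subgroup of $\q{G}$ is $T^k$ with its $k$ simple factors permuted transitively, and proving $k=1$ is the technical heart of the theorem; you supply no argument for it. Your diagonal observation (the full diagonal of $N\times N$ has index $|N|$) concerns two isomorphic \emph{normal} subgroups and does not handle conjugate components inside a single minimal normal subgroup, where the relevant diagonal subgroup need not be contained in a maximal subgroup of the required index without further structure theory. The paper closes this gap using Kov\'acs's product-type maximal subgroups $\Norm{G}{\prod_i H^{g_i}}$ of index $[T:H]^k$ together with Guralnick's Theorem~C, which provides a faithful maximal subgroup of \emph{even} index whenever $T\not\cong\mr{A}_7$ (so $k\ge 2$ would give index divisible by $4$), and a separate argument for $T\cong\mr{A}_7$ via the Aschbacher--Scott complement and Th\'evenaz's description of maximal subgroups of $T^k$ containing a diagonal. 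Finally, your Sylow argument as written only yields derived length at most $3$: the containment $P'\le\prod_i(P\cap N_i)'$ is unjustified (commutators of $P\cap N_i$ with elements outside $\mbf{F}^*(\q{G})$ need not lie there), and what actually follows from ``abelian on top'' is $P'\le\prod_i(P\cap N_i)$. The clean route is to embed $\q{G}$ into $\prod_i\q{G}/\Cen{\q{G}}{N_i}$ and verify directly that each almost simple factor has Sylow $2$-subgroups of derived length at most $2$, as the paper does.
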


\begin{proof}
In light of Lemma \ref{l:O2'}, we may assume that $\mbf{O}_{2'}(G)=1$. Let $\q{G}=G/\mbf{O}_2(G)$ and use the `bar' notation. We claim that $\mbf{O}_{2'}(\q{G})=1$. To see this, let $K/\mbf{O}_2(G) = \mbf{O}_{2'}(G/\mbf{O}_2(G))$. Then $\mbf{O}_2(G) \nrm K \nrm G$ and $K$ is solvable. By Hall's theorem \cite[Satz VI.1.7]{Hu67}, $K$ contains a Hall $2'$-subgroup, say $A$, and any two Hall $2'$-subgroups are conjugate in $K$. So $K=\mbf{O}_2(G)A$ and, by Frattini's argument, $G=K \Norm{G}{A}=\mbf{O}_2(G) \Norm{G}{A}$. If $G = \Norm{G}{A}$, then $A \le \mbf{O}_{2'}(G)=1$ and we are done. So assume that $\Norm{G}{A} \neq G$, and let $M$ be a maximal subgroup of $G$ containing $\Norm{G}{A}$. Then ${G= \mbf{O}_2(G) M}$, and so $[G:M]=2$ since the index is a power of $2$ and almost odd. Therefore,~${M \nrm G}$. Now, $\gen{A^G} \nrm G$ and $\gen{A^G} \le K$, so $A$ is a Hall $2'$-subgroup of $\gen{A^G}$. Again by Hall's theorem and the Frattini argument, $G=\gen{A^G}\Norm{G}{A}$. However, $\gen{A^G} \le M$ as $A \le M \nrm G$, so $G=\gen{A^G}\Norm{G}{A} \le M$, a contradiction.

We have shown that $\mbf{O}_{2'}(\q{G})=\mbf{O}_2(\q{G})=1$. From now on, we work in the quotient~$\q{G}=G/\mbf{O}_{2}(G)$, so in an abuse of notation we denote $\q{G}$ again by $G$ to simplify the exposition. We now have $F(G)=1$ and $F^*(G) = E(G) = N_1 \times \dots \times N_r$ is a direct product of (nonabelian) minimal normal subgroups of $G$. Let $N$ be a minimal normal subgroup of $G$. By passing to the quotient $G/\Cen{G}{N}$, we may assume that $N$ is the unique minimal normal subgroup of $G$ and $N = T_1 \times \dots \times T_k$ for some isomorphic nonabelian finite simple groups $T_i$. We will show that $k=1$ and $G$ is one of the almost simple groups in Theorem \ref{thm1:almost-simple}. 

Letting $T=T_1$, we have that $k=[G:\Norm{G}{T}]$. Choose a transversal $\{g_i\}_{i=1}^k$ for $\Norm{G}{T}$ in $G$. Let $M$ be a maximal subgroup of $\Norm{G}{T}$ containing $\Cen{G}{T}$ that does not contain $T$; that is, $M/\Cen{G}{T}$ is a faithful maximal subgroup of the almost simple group $\Norm{G}{T}/\Cen{G}{T}$. Let $H = M \cap T$ (so $1 < H < T$), and let $K=\prod_{i=1}^k H^{g_i}$. Then it follows by \cite[Theorem 4.3]{Ko86} that $\Norm{G}{K}$ is a maximal subgroup of $G$ (of so-called ``product type'') with index ${[G:\Norm{G}{K}]}=[T:H]^k=[\Norm{G}{T}/\Cen{G}{T}:M/\Cen{G}{T}]^k$. By \cite[Theorem~C]{Gu86} and its proof, if $T \not\cong \mr{A}_7$, then we can choose $M/\Cen{G}{T}$ to be a faithful maximal subgroup of $\Norm{G}{T}/\Cen{G}{T}$ of even index, so we must have $k=1$. Furthermore, if $T$ is not isomorphic to $\mr{A}_6$, $\psu{3}{5}$, $\psl{2}{q}$ with $q \equiv 5 \, (8)$ or $\mr{G}_2(q)$ with $q = 5^{2n+1}$ ($n\ge 0$, $n \neq 1$), then we can choose $M/\Cen{G}{T}$ to be a faithful maximal subgroup of $\Norm{G}{T}/\Cen{G}{T}$ with index divisible by $4$ by Theorem~\ref{thm:almost-simple-faithful}. We have shown that either $N \cong T$ is simple, where $T$ is one of the simple groups in Theorem~\ref{thm1:almost-simple}, or $N \cong T^k$ with $T \cong \mr{A}_7$. 

Now, if $T \cong \mr{A}_7$, then the almost simple group $\Norm{G}{T}/\Cen{G}{T}$ is isomorphic to~$\mr{A}_7$; otherwise, it is isomorphic to $\mr{S}_7$ and we can find a faithful maximal subgroup of index divisible by $4$, which yields a maximal subgroup of $G$ with index divisible by~$4$ using the argument of the last paragraph. Therefore, $\Norm{G}{T}=T \Cen{G}{T}$, and so $G$ is a wreath product. Assume for contradiction that $k > 1$. By \cite[Proposition~8.1]{Gu86} and the remark following, there exists a diagonal subgroup $D \le N$ such that ${G=N \Cen{G}{D}}$. Furthermore, a maximal subgroup $M$ of $G$ containing $D \Cen{G}{D}$ has index $[G:M]$ divisible by $|T|$, which is divisible by $4$. We also give a more detailed proof of this fact: By \cite[Theorem 2]{AS85}, $N$ has a complement $L$ in $G$, and $L$ normalizes a diagonal subgroup $D$ of~$N$. Note that $L$ acts transitively on the simple factors of~$N$. Since $DL$ is a proper subgroup of $G$, we may take $M$ to be a maximal subgroup of $G$ containing $DL$. Now, $[G:M]=[N:N \cap M]$ and $N \cap M$ contains $D$, so a maximal subgroup $\wt{M}$ of $N$ containing $N \cap M$ does not split as a direct product. Therefore, $\wt{M}$ is the inverse image of the diagonal subgroup of $T^2$ under a projection map from $T^k$ to $T^2$ by \cite[Corollary 1.7]{Th97}. That is, $|T|=[N:\wt{M}]$ divides $[N:N \cap M]$, and so $[G:M]$ is divisible by $4$.
 
We have proved all of Theorem \ref{thm:almost-odd-sect} except for checking that the derived length of the Sylow $2$-subgroups of the possible almost simple groups is at most $2$. We simply compute in GAP that the Sylow $2$-subgroups of $\mr{A}_6$, $\mr{S}_6$, $\mr{A}_7$, $\psu{3}{5}$ and $\psu{3}{5}.2$ have derived length $2$. A Sylow $2$-subgroup of $\mr{P \Gamma L}_2(q)$ with $q \equiv 5 \, (8)$ is contained in $\pgl{2}{q}$ since the field automorphisms have odd order, and the Sylow $2$-subgroups are dihedral of derived length $2$. Finally, the Sylow $2$-subgroups of an almost simple group with socle $\mr{G}_2(q)$ and $q=5^e$ with $e$ odd are contained in $\mr{G}_2(q)$ since the field automorphisms have odd order. Note that $|\mr{G}_2(q)|_2=2^6$ for all $q$ under consideration, so $\mr{G}_2(5) \le \mr{G}_2(q)$ with odd index, and we simply compute in GAP that the Sylow $2$-subgroups of $\mr{G}_2(5)$ have derived length $2$.
\end{proof}

%%%%%%%%%%%%%%%%%%%%%%%%%%%%%%%%%%%%%%%
%%%%%%%%%%% Section 4 %%%%%%%%%%%%%%%%%
%%%%%%%%%%%%%%%%%%%%%%%%%%%%%%%%%%%%%%%
\section{\texorpdfstring{$p$}{p}-nilpotency criteria}\label{sec:solvable}
In this section, we consider solvable groups, and we derive some $2$-nilpotency criteria as a consequence of our previous results.

First, we prove the $p$-nilpotency criterion of Theorem \ref{thm2:solvable}.

\begin{theorem}\label{thm:solvable-insection} %4.1 %solvable p-nilpotent
Let $G$ be a finite $p$-solvable group, where $p$ is the smallest prime dividing the order of $G$. Every maximal subgroup of $G$ has index not divisible by $p^2$ if and only if $G$ has a normal $p$-complement.
\end{theorem}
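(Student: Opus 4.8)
The plan is to prove both directions by induction on $|G|$, using the hypothesis that $p$ is the smallest prime dividing $|G|$ to control indices of maximal subgroups. The easy direction is ``normal $p$-complement $\Rightarrow$ indices not divisible by $p^2$'': if $N$ is a normal $p$-complement and $M$ is maximal in $G$, then either $N \le M$, in which case $[G:M]$ divides $[G:N] = |G|_p^{-1}|G|$... wait, more precisely $G/N$ is a $p$-group so $M/N$ is maximal in a $p$-group and has index $p$; or $N \not\le M$, so $G = NM$ and $[G:M] = [N : N \cap M]$ is a $p'$-number, hence coprime to $p$. Either way $[G:M]$ is not divisible by $p^2$. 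So the real content is the forward direction.

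For the forward direction, assume every maximal subgroup of $G$ has index not divisible by $p^2$. First I would reduce to the case $\mbf{O}_{p'}(G) = 1$: the property passes to $G/\mbf{O}_{p'}(G)$ (a maximal subgroup either contains $\mbf{O}_{p'}(G)$, giving a maximal subgroup of the quotient with the same index, or doesn't, giving odd-for-$p$ index — the argument of Lemma~\ref{l:O2'} adapted to $p$), and a normal $p$-complement in $G/\mbf{O}_{p'}(G)$ pulls back to one in $G$ since $\mbf{O}_{p'}(G)$ is itself a $p'$-group. So assume $\mbf{O}_{p'}(G) = 1$; since $G$ is $p$-solvable this forces $\mbf{O}_p(G) \ne 1$, so $F(G) = \mbf{O}_p(G) =: P \ne 1$, and by $p$-solvability $\Cen{G}{P} \le P$. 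Let $M$ be a maximal subgroup of $G$ with $P \not\le M$ (such exists if $G \ne P$, e.g. take $M$ containing a complement-ish subgroup — more carefully, take any maximal $M \ge N_G(Q)$ for a Sylow $p$-subgroup $Q$, or just argue via a chief factor). Then $G = PM$ and $[G:M] = [P : P \cap M]$ is a power of $p$, so by hypothesis $[G:M] = p$; hence $M \nrm G$ with $G/M \cong \Z_p$. The key step is now to show $G$ is forced to be $p$-closed or to induct: consider a chief series and use that every chief factor $H/K$ that is a $p'$-group must, by an application of the hypothesis to a maximal subgroup supplementing it, have the property that $G/K$ acts on $H/K$ with the Sylow $p$-subgroup of $G/C_G(H/K)$ of order at most $p$; combined with the minimality of $p$ among prime divisors and a transfer/fusion argument (Burnside's normal $p$-complement theorem, or Tate's theorem), this should force $G/\mbf{O}_{p'p}(G)$ to be a $p'$-group of order not divisible by $p$, i.e. $\mbf{O}^{p'}(G) = \mbf{O}_p(G)$, giving the normal $p$-complement.

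I expect the main obstacle to be the step where one leverages ``$p$ smallest prime dividing $|G|$'' together with ``$p$-group-index maximal subgroups have index exactly $p$'' to rule out the existence of a composition factor on which $G$ acts with nontrivial $p$-part. Concretely: if $K \nrm G$ and $H/K$ is a chief factor of order coprime to $p$, one wants a maximal subgroup $M \ge K$ with $H \not\le M$ and $G = HM$; then $[G:M] = [H : H\cap M]$ divides $|H/K|$ (a $p'$-number), which is automatically not divisible by $p^2$, so this gives \emph{no} constraint directly — the constraint must instead come from maximal subgroups whose index \emph{is} a power of $p$. The right move is probably: let $Q$ be a Sylow $p$-subgroup of $G$; every maximal subgroup $M$ with $Q^g \le M$ for some $g$ has $p'$-index, and every maximal subgroup of $p$-power index has index exactly $p$ hence is normal; intersecting all maximal subgroups of $p$-power index gives a normal subgroup $R$ with $G/R$ an elementary abelian $p$-group (a quotient of the $p$-part of $G/G'$) and with $\mbf{O}^p(R) = R$... and then one shows $R$ has a normal $p$-complement by induction (its maximal subgroups inherit the hypothesis, possibly after another $\mbf{O}_{p'}$ reduction) — this is where $|R| < |G|$ unless $G$ is already a $p$-group, and where the minimality of $p$ is needed to know $R$ is still $p$-solvable with the right smallest prime, or that $R$'s normal $p$-complement is normal in $G$. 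Stitching the induction together cleanly — in particular handling the base case $G = P$ and ensuring the normal $p$-complement of the relevant subgroup is characteristic/normal in $G$ — will be the delicate bookkeeping.
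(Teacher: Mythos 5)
Your easy direction (normal $p$-complement $\Rightarrow$ no maximal subgroup of index divisible by $p^2$) is correct and matches the paper. The converse, however, is not actually proved: you sketch several candidate strategies (a chief-factor analysis feeding into a Burnside/Tate transfer argument, and an induction on the intersection $R$ of all maximal subgroups of $p$-power index) and complete none of them. The induction on $R$ does not go through as described: a maximal subgroup of the proper normal subgroup $R$ need not arise by intersecting $R$ with a maximal subgroup of $G$, so there is no reason the hypothesis ``index not divisible by $p^2$'' passes to $R$ (and $R$ may equal $G$ when $G$ has no maximal subgroup of $p$-power index, so the induction can also fail to reduce $|G|$). The transfer route would require, e.g., a Sylow $p$-subgroup central in its normalizer, which nothing in your setup supplies. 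Crucially, you never use the one consequence of $p$-solvability that makes the theorem easy, namely the existence and conjugacy of Hall $p'$-subgroups.

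The paper's argument is short and needs no induction: let $H$ be a Hall $p'$-subgroup of $G$ (these exist and are all conjugate by Hall's theorem for $p$-solvable groups). If $\Norm{G}{H}=G$ we are done. Otherwise take a maximal subgroup $M \ge \Norm{G}{H}$; since $M$ contains a Hall $p'$-subgroup of $G$, the index $[G:M]$ is a power of $p$, hence equals $p$ by hypothesis, hence $M \nrm G$ because $p$ is the smallest prime dividing $|G|$. The Frattini argument applied to the Hall $p'$-subgroup $H$ of the normal subgroup $M$ gives $G=M\,\Norm{G}{H}=M$, a contradiction. Your observation that every maximal subgroup of $p$-power index is normal of index $p$ is the right first step, but it must be applied to a maximal subgroup containing $\Norm{G}{H}$ rather than fed into a chief-series or transfer argument; that is the missing idea.
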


\begin{proof}
First assume that a finite group $G$ has a normal $p$-complement $N$, and let $M$ be a maximal subgroup of~$G$. %We have
%\begin{align*}
%    [G:M]_p = [G:NM]_p[NM:M]_p = [G/N:NM/N]_p[N:M\cap N]_p \le p
%\end{align*}
%since $G/N$ is isomorphic to a Sylow $p$-subgroup of $G$.
If $N \not\le M$, then $G=NM$ and $[G:M]=[N:M \cap  N]$ is not divisible by $p$. If $N \le M$, then $M/N$ is a maximal subgroup of the $p$-group $G/N$, so $[G:M]=[G/N:M/N]=p$. We have shown that the index of a maximal subgroup is either coprime to $p$ or equal to $p$. Note that this direction does not require $G$ to be $p$-solvable or $p$ to be the smallest prime dividing $|G|$. Indeed, this direction is a consequence of the fact that $p$-nilpotent groups are $p$-supersolvable and \cite[Satz~VI.9.2]{Hu67}.

For the converse, assume that $G$ is a $p$-solvable group, where $p$ is the smallest prime dividing $|G|$. Assume that every maximal subgroup of $G$ has index not divisible by~$p^2$. Since $G$ is $p$-solvable, $G$ has a Hall $p'$-subgroup $H$ and any two Hall $p'$-subgroups are conjugate \cite[Satz VI.1.7]{Hu67}. We are done if $H \nrm G$, so assume for contradiction that $\Norm{G}{H}<G$. Let $M$ be a maximal subgroup of $G$ containing $\Norm{G}{H}$. Since $M$ contains a Hall $p'$-subgroup of $G$, its index is a power of $p$. Combined with our assumption, this means $[G:M]=p$. As $p$ is the smallest prime dividing $|G|$, $M$ is normal in $G$. But then by Frattini's argument, ${G=M \Norm{G}{H}=M<G}$, a contradiction. Note that the proof in this direction shows that if $G$ is $p$-supersolvable for the smallest prime $p$ dividing $|G|$, then $G$ is $p$-nilpotent.
\end{proof}\textit{}
% \mbox{\cite[Satz I.6.6]{Hu67}} - index smallest prime normal

%Note that the conclusion of Theorem \ref{thm:solvable-insection} does not hold if $p$ is not the smallest prime dividing the order of the $p$-solvable group $G$, as the group $D_{30}$ and $p=3$ show. 

Next, we prove the $2$-nilpotency criterion of Theorem \ref{cor4:2nilpotent-1}.

\begin{theorem}\label{thm:4.2} %4.2 %2-nilpotent 1
Let $G$ be a finite group. If every maximal subgroup of $G$ has index not divisible by $3$ and not divisible by $4$, then $G$ has a normal $2$-complement.
\end{theorem}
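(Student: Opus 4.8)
The plan is to combine the two preceding theorems, using the hypothesis ``index not divisible by $3$'' only to force $G$ to be solvable and the hypothesis ``index not divisible by $4$'' only at the very end. First, if $|G|$ is odd there is nothing to prove, since $G$ is then its own normal $2$-complement. So I would assume $|G|$ is even, in which case $2$ is the smallest prime dividing $|G|$; since every maximal subgroup of $G$ has index not divisible by $4 = 2^2$, Theorem~\ref{thm:solvable-insection} applied with $p = 2$ shows that $G$ has a normal $2$-complement as soon as $G$ is known to be solvable. Thus the whole problem comes down to proving that $G$ is solvable.

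Suppose, for contradiction, that $G$ is nonsolvable. Since every maximal subgroup of $G$ has index not divisible by $4$, Theorem~\ref{thm:almost-odd-sect} applies to $\q{G} = G/\mbf{O}_{2',2}(G)$ and gives $\q{G}' = N_1 \times \dots \times N_r$ with $r \ge 1$, each $N_i$ a nonabelian simple minimal normal subgroup of $\q{G}$ drawn from the list there. I would fix $N = N_1$ and pass to the almost simple group $L = \q{G}/\Cen{\q{G}}{N}$, which has socle isomorphic to $N$ and, by the additional constraints in Theorem~\ref{thm:almost-odd-sect}, satisfies one of: $\mr{A}_6 \le L \le \mr{S}_6$ if $N \cong \mr{A}_6$; $L \cong \mr{A}_7$ if $N \cong \mr{A}_7$; $\psu{3}{5} \le L \le \psu{3}{5}.2$ if $N \cong \psu{3}{5}$; or $L$ almost simple with socle $\psl{2}{q}$ ($q \equiv 5 \, (8)$) or $\mr{G}_2(q)$ ($q = 5^{2n+1}$, $n \ge 0$, $n \neq 1$). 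Pulling back along the quotient maps $G \twoheadrightarrow \q{G} \twoheadrightarrow L$, any maximal subgroup of $L$ yields a maximal subgroup of $G$ of the same index, so it is enough to exhibit in each of these cases a maximal subgroup of $L$ of index divisible by $3$: this contradicts the hypothesis and so forces $G$ to be solvable, finishing the proof.

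For the finitely many cases this is immediate from natural actions or tables of maximal subgroups: a point stabilizer in the degree-$6$ action when $\mr{A}_6 \le L \le \mr{S}_6$ (index $6$), the subgroup $\psl{2}{7}$ when $L \cong \mr{A}_7$ (index $15$), and a parabolic subgroup when $\psu{3}{5} \le L \le \psu{3}{5}.2$ (index $126 = 2 \cdot 3^2 \cdot 7$). For the socle $\psl{2}{q}$ with $q \equiv 5 \, (8)$ I would use that $q$ is not a power of $3$ (no power of $3$ is $\equiv 5 \, (8)$), so that $3$ divides exactly one of $q - 1$ and $q + 1$: if $3 \mid q+1$, a point stabilizer in the degree-$(q+1)$ action has index $q+1$; if $3 \mid q-1$ (so $q \ge 13$), the group $\psl{2}{q}$ has a maximal dihedral subgroup $D_{q+1}$ of index $q(q-1)/2$ with $c = 1 = \pi$ by \cite[Table~8.1]{BHR13}, which Lemma~\ref{l:c=1} lifts to a maximal subgroup of $L$ of the same index. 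For the socle $\mr{G}_2(q)$ with $q = 5^{2n+1}$ we have $q \equiv 2 \, (3)$, so $3 \mid q+1$ and $3 \nmid q-1$, whence $3$ divides the parabolic index $(q^6-1)/(q-1)$; since $\mr{Out}(\mr{G}_2(q))$ consists only of field automorphisms and so has odd order, and parabolic subgroups satisfy $c = 1 = \pi$, Lemma~\ref{l:c=1} again lifts this to $L$. I expect the main obstacle to be exactly this last step for the two infinite families --- choosing a subgroup whose index is divisible by $3$ uniformly in $q$, confirming it is maximal in the socle, and transporting it to the possibly larger group $L$ --- together with verifying that the single small exception $q = 5$ for $\psl{2}{q}$ causes no difficulty, which it does not, since there $3 \mid q+1$ and the point stabilizer already works.
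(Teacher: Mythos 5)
Your proposal is correct and follows essentially the same route as the paper: reduce to the solvable case via Theorem~\ref{thm:solvable-insection} with $p=2$, invoke Theorem~\ref{thm:almost-odd-sect} in the nonsolvable case, and exhibit in each resulting almost simple group a maximal subgroup of index divisible by $3$ (lifted via $c=1=\pi$ and Lemma~\ref{l:c=1}). The only difference is cosmetic: you replace the paper's GAP verifications for the small groups with explicit maximal subgroups, and for $\psl{2}{q}$ you split on whether $3$ divides $q+1$ or $q-1$ rather than on $q\ge 13$; both versions work.
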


\begin{proof}
Since the index of every maximal subgroup is almost odd, $G$ is either solvable, in which case we are done by Theorem \ref{thm:solvable-insection}, or $G$ is nonsolvable and has the structure described in Theorem \ref{thm3:almost-odd}. Assume for contradiction that $G$ is nonsolvable. By passing to an appropriate quotient, we can assume that $G$ is isomorphic to $\mr{A}_6$, $\mr{S}_6$, $\mr{A}_7$ or $G$ is an almost simple group with socle isomorphic to $\psl{2}{q}$ with $q \equiv 5 \, (8)$ or $\mr{G}_2(q)$ with $q=5^{2n+1}$ ($n \ge 0$, $n \neq 1$). We derive a contradiction by showing that each of these almost simple groups has a maximal subgroup of index divisible by $3$.

If $G$ is isomorphic to $\mr{A}_5 \cong \mr{PSL}_2(5)$, $\mr{S}_5$, $\mr{A}_6$, $\mr{S}_6$, $\mr{A}_7$, $\psu{3}{5}$ or $\psu{3}{5}.2$, we check by computing in GAP that $G$ has a maximal subgroup of index divisible by $3$. If $G$ is almost simple with socle isomorphic to $\psl{2}{q}$ with $q \equiv 5 \, (8)$, then $q \equiv \pm 1 \, (3)$. We may assume that $q \ge 13$, so $\psl{2}{q}$ has maximal subgroups isomorphic to $D_{q \pm 1}$ of indices $q(q \mp 1)/2$ with $c=1=\pi$ by \cite[Table~8.1]{BHR13}, and one of these indices is divisible by $3$. Finally, if $G$ is almost simple with socle $\mr{G}_2(q)$ with $q=5^{2n+1}$ ($n \ge 0$, $n \neq 1$), then $q \equiv -1 \, (3)$. The socle has maximal subgroups isomorphic to $q^5.\mr{GL}_2(q)$ of index divisible by $(q+1)_3 \ge 3$ with $c=1=\pi$ by \cite[Table~8.41]{BHR13}. The theorem is proved.
\end{proof}

To conclude the paper, we prove Theorem \ref{cor5:2nilpotent-2}, which is a recognition theorem for $p$-nilpotency.

\begin{theorem}\label{thm:4.3} %4.3 %p-nilpotent 2
Let $G$ be a finite group, and let $p$ be the smallest prime dividing the order of $G$. Then $G$ has a normal $p$-complement if and only if for every second maximal subgroup $H$ of $G$, the index $[G:H]$ is a power of $p$ or is not divisible by $p^2$.
\end{theorem}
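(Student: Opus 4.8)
The plan is to prove the two directions separately, leveraging the $p$-nilpotency criterion of Theorem~\ref{thm:solvable-insection} and the structure theorem of Theorem~\ref{thm:almost-odd-sect}. First I would handle the forward direction, assuming $G$ has a normal $p$-complement $N$. Then $G$ is $p$-nilpotent, hence $p$-supersolvable, hence $p$-solvable, so every subgroup of $G$ is $p$-solvable. If $H$ is a second maximal subgroup contained in a maximal subgroup $M$, I would analyze the positions of $N$ relative to $M$ and relative to $H$. As in the proof of Theorem~\ref{thm:solvable-insection}, if $N \not\le M$ then $[G:M]$ is coprime to $p$, and then $[M:H] \mid [M]$; one observes $M$ itself has a normal $p$-complement $M \cap N$ (since $N \cap M \nrm M$ has $p'$-order and $p$-index in $M$), so by the same dichotomy $[M:H]$ is either coprime to $p$ or equal to $p$, and multiplying gives $[G:H]$ is coprime to $p$ or has $p$-part exactly $p$. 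If $N \le M$, then $[G:M]=p$ (as $G/N$ is a $p$-group and $M/N$ is maximal in it), and $H$ is maximal in $M$; again $M$ has a normal $p$-complement, namely $N$, so $[M:H]$ is coprime to $p$ or equal to $p$, giving $[G:H]$ with $p$-part at most $p^2$ — but here I need the sharper conclusion, so I would note that if $[M:H]=p$ then $H \supseteq N$ (since $M/N$ is a $p$-group all of whose maximal subgroups contain nothing forcing otherwise — rather, $H \cap N \nrm H$ is a $p$-complement of $H$ with $p$-index a power of $p$, and $[G:H]=[G:N][N:H\cap N]=$ a power of $p$). So the conclusion splits cleanly: $[G:H]$ is a power of $p$ (when $N \le H$) or not divisible by $p^2$ (when $N \not\le H$, forcing $[N:H\cap N]$ coprime to $p$ and at most one factor of $p$ coming from $[G:N]$-side). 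I would write this case analysis carefully.

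For the converse, assume that for every second maximal subgroup $H$ of $G$, the index $[G:H]$ is a power of $p$ or is not divisible by $p^2$. The key first step is to deduce that every \emph{maximal} subgroup of $G$ has index not divisible by $p^2$, so that Theorem~\ref{thm:solvable-insection} (in the $p$-solvable case) or Theorem~\ref{thm:almost-odd-sect} (when $p=2$ and $G$ is nonsolvable) applies. To do this, let $M$ be a maximal subgroup of $G$ and let $H$ be any maximal subgroup of $M$, so $H$ is second maximal in $G$. If $[G:M]$ were divisible by $p^2$, I would try to choose $H$ so that $[M:H]$ is coprime to $p$, making $[G:H]=[G:M][M:H]$ divisible by $p^2$ but not a power of $p$ (since $[G:M]$ is not a prime power unless it equals $p^2$ exactly — and even then, picking $H$ with $[M:H]$ coprime to $p$ but $[M:H] \ne 1$, i.e., $M$ not a $p$-group, works). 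The mechanism: if $M$ is not a $p$-group, $M$ has a maximal subgroup of index coprime to $p$ (take $H$ containing a Sylow $p$-subgroup of $M$, e.g. $H = N_M(P)$ if proper, or a maximal overgroup of a Sylow $p$-subgroup — one must be careful, but a maximal subgroup containing a Sylow $p$-subgroup has $p'$-index). If $M$ \emph{is} a $p$-group, then $G$ has a maximal subgroup that is a $p$-group, so $G$ is $p$-nilpotent by a theorem of the form "if a Sylow $p$-subgroup is maximal then $G$ is $p$-solvable" — more directly, $|G| = |M| \cdot [G:M]$ with $M$ a $p$-group forces $[G:M]$ coprime to... no, that is false in general; instead I would observe that if $M$ is a $p$-subgroup that is maximal, then $G$ is not simple unless small, and handle it via the hypothesis applied to second maximals of $G$ through other maximal subgroups. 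The cleanest route: show directly that $G$ has a maximal subgroup containing a Sylow $p$-subgroup has index prime to $p$; if \emph{every} maximal subgroup has index prime to $p$ then $G$ is a $p$-group (all composition factors order $p$) and we're trivially done; otherwise some maximal $M$ has index divisible by $p$, and the above argument forces $[G:M]$ not divisible by $p^2$. So in all cases every maximal subgroup of $G$ has index not divisible by $p^2$.

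Having established that every maximal subgroup of $G$ has index not divisible by $p^2$, I would finish as follows. If $p$ is odd, then the smallest prime dividing $|G|$ is $p \ge 3$, so in particular $G$ has no element of order $2$... no — rather, $G$ is automatically $p$-solvable is not available; but the hypothesis "every maximal subgroup has index not divisible by $p^2$" with $p$ the smallest prime should force $G$ $p$-solvable: if not, $G$ has a nonabelian composition factor, and chasing through a chief series and the analysis in Section~\ref{sec:finite-groups} (the diagonal-subgroup and product-type arguments, which work for any prime in the appropriate sense) one finds a maximal subgroup of index divisible by $p^2$ unless the relevant simple sections are among a short list — and for $p \ge 3$ none of $\mr{A}_6,\mr{A}_7,\psu{3}{5},\psl{2}{q}\ (q\equiv5\ (8)), \mr{G}_2(5^{2n+1})$ can occur with $p$ being their \emph{smallest} prime divisor (all have even order). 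Hence for odd $p$, $G$ is $p$-solvable and Theorem~\ref{thm:solvable-insection} gives a normal $p$-complement. If $p=2$, then either $G$ is solvable and Theorem~\ref{thm:solvable-insection} applies to give a normal $2$-complement, or $G$ is nonsolvable with the structure of Theorem~\ref{thm:almost-odd-sect}; in the latter case I must derive a contradiction with the second-maximal hypothesis by exhibiting, in each of the listed almost simple configurations, a maximal subgroup $M$ with $[G:M]$ divisible by $4$ that contains a maximal subgroup $H$ of odd index (so $[G:H]$ is divisible by $4$ but not a power of $2$). The main obstacle is this last step: verifying for $\q{G}' \cong N_1 \times \cdots \times N_r$ with $N_i \in \{\mr{A}_6,\mr{A}_7,\psu{3}{5},\psl{2}{q},\mr{G}_2(5^{2n+1})\}$ that such a chain $H < M < G$ always exists — this requires knowing a little about the maximal subgroups of maximal subgroups in these groups (e.g., for $\mr{A}_6$, the maximal subgroup $\mr{A}_5$ has index $6$, divisible by... only $2$, not $4$; so one needs index-divisible-by-$4$ maximals, which from Theorem~\ref{thm:almost-simple-faithful} do \emph{not} exist for $\mr{A}_6$ itself — the point is these are \emph{not} almost simple but have a nontrivial $\mbf{O}_{2',2}$, and it is the \emph{extension} that produces the bad maximal subgroup). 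I would handle this by noting that since $G$ is nonsolvable with every maximal of almost odd index, but we also need a \emph{second} maximal of index $4$ times an odd number: I would argue that $\mbf{O}_{2',2}(G)$ is nontrivial (else $G$ is one of the almost simple groups from Theorem~\ref{thm1:almost-simple}, each of which — by Theorem~\ref{thm2:solvable} applied to... no) — and a Sylow $2$-subgroup $P$ of $G$ has $|P| \ge 4$ with $\Phi(P) \ne P$, so picking a maximal subgroup $M$ of $G$ with $M \supseteq N_G(P_0)$ for a suitable subgroup, or more simply: since $G$ is nonsolvable, $|G|_2 \ge 4$; take $M$ maximal containing $N_G(P)$ where $P \in \mathrm{Syl}_2(G)$ — then $[G:M]$ is odd, which is the wrong direction. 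Instead, the honest approach: one of the maximal subgroups of $G$, call it $M$, satisfies $[G:M] = 2m$ with $m$ odd (such exists because $G$ is not $2$-nilpotent — if all maximals had odd index, $G$ would have a normal $2$-complement by Theorem~\ref{thm2:solvable} via $O^{2}(G) \ne G$ reasoning, contradicting nonsolvability combined with... ), and then within $M$ one finds a maximal subgroup $H$ with $[M:H]$ even, giving $[G:H]$ divisible by $4$ and not a $2$-power. Making this rigorous is the crux, and I expect it to require a short separate lemma about the existence of a maximal subgroup of even index in any group that is not $2$-nilpotent, combined with the same statement applied to $M$; I would spell that out as the final portion of the proof.
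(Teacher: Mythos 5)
Your overall outline matches the paper's: both directions hinge on Theorem~\ref{thm:solvable-insection} and Theorem~\ref{thm:almost-odd-sect}, your forward direction is essentially the paper's argument, and your first move in the converse (every maximal subgroup has index a $p$-power or not divisible by $p^2$, with the $p$-power case forcing $G$ to be a $p$-group) is the right one. But your execution of that reduction stalls exactly where you admit it does: the clean way out of the ``$M$ is a $p$-group'' case is to note that for $H$ maximal in $M$ the hypothesis forces $[G:H]=[G:M]\,[M:H]$ to be a power of $p$ (it is divisible by $p^2$), hence $[G:M]$ is a $p$-power and $|G|=|M|\,[G:M]$ is a $p$-power, so $G$ is a $p$-group and the theorem is trivial. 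Your intermediate assertion that ``if every maximal subgroup has index prime to $p$ then $G$ is a $p$-group'' is false and should be discarded. Also, for odd $p$ none of the Section~\ref{sec:finite-groups} machinery is needed: a nonsolvable group has even order by Feit--Thompson, so $p=2$ whenever $G$ is nonsolvable and the odd-$p$ case is automatically solvable.

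The genuine gap is in the crux, the nonsolvable case with $p=2$. Your plan rests on a proposed ``short separate lemma about the existence of a maximal subgroup of even index in any group that is not $2$-nilpotent,'' to be applied to $G$ and then again to a maximal subgroup $M$ of index $2m$. That lemma is false: $\mr{A}_7$ is not $2$-nilpotent, yet its maximal subgroups have indices $7$, $15$, $15$, $21$, $35$, all odd --- this is precisely the content of Guralnick's theorem quoted in the introduction, that a group of even order whose maximal subgroups all have odd index satisfies $G/\mbf{O}_{2',2}(G)\cong\mr{A}_7$. Since $\mr{A}_7$ is one of the groups you must rule out, your mechanism produces nothing there; the violating second maximal subgroup of $\mr{A}_7$ comes from a chain such as $7{:}3<\psl{2}{7}<\mr{A}_7$ of index $8\cdot 15=120$, where the factor of $4$ arises entirely inside a maximal subgroup of \emph{odd} index. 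Even when a maximal $M$ of index $2m$ with $m>1$ exists, you would still need $M$ itself to have a maximal subgroup of even index, and $M$ could again be of Guralnick type. The paper closes this step by exhibiting explicit chains $H<M<G$ case by case: GAP computations for $\mr{A}_6$, $\mr{S}_6$, $\mr{A}_7$, $\psu{3}{5}$, $\psu{3}{5}.2$; an index-$2$ subgroup of the maximal subgroup $\Norm{G}{D_{q+1}}$ for socle $\psl{2}{q}$, giving a second maximal subgroup of index $q(q-1)$; and $\su{3}{q}<\su{3}{q}{:}2$ for socle $\mr{G}_2(q)$, giving index $q^3(q^3-1)$. Some such case-by-case construction is unavoidable, and without it your proof does not close.
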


\begin{proof}
First assume that a finite group $G$ has a normal $p$-complement, and let $H$ be a second maximal subgroup of~$G$. So $H$ is maximal in some maximal subgroup $M$ of~$G$. We showed in the proof of Theorem~\ref{thm:solvable-insection} that whenever a finite group has a normal $p$-complement, the index of a maximal subgroup is either equal to $p$ or coprime to $p$. As both $G$ and $M$ have normal $p$-complements, the index $[G:H]=[G:M][M:H]$ is either equal to~$p^2$ or not divisible by $p^2$. Note that this direction does not require $p$ to be the smallest prime dividing $|G|$.

%We have
%\begin{align*}
%    [G:H] = [G:NH][NH:H] = [G/N: NH/N] [N:H \cap N].
%\end{align*}
%Note that $[N:H \cap N]$ is not divisible by $p$, and $NH/N$ contains a second maximal subgroup of the $p$-group $G/N$, so ${[G/N:NH/N]}$ is equal to $1$, $p$ or $p^2$.

%We consider three cases for the containment of $N$ in $G$: (i) If $N \le H \le G$, then $[G:H]=[G/N:H/N]$ is a power of $p$ since $G/N$ is isomorphic to a Sylow $p$-subgroup of $G$. (ii) If $N \not\le H$, then $M=(N \cap M)$ 

For the converse, assume that the index of every second maximal subgroup of $G$ is a power of $p$ or is not divisible by $p^2$. Observe that if $M$ is a maximal subgroup of~$G$, then the index of $M$ in $G$ is a power of $p$ or is not divisible by $p^2$. Suppose there exists some maximal subgroup $M$ of $G$ with index $[G:M]=p^k>p$. Then every maximal subgroup of $M$ has $p$-power index. This means $M$ is a $p$-group, and then so is $G$. The theorem holds trivially in this case, so we may assume that $G$ is not a $p$-group and every maximal subgroup has index not divisible by $p^2$. Now, $G$ is either solvable, in which case we are done by Theorem \ref{thm:solvable-insection}, or $G$ is nonsolvable, in which case $p=2$ by the Feit--Thompson theorem and $G$ has the structure described in Theorem \ref{thm3:almost-odd}. Assume for contradiction that $G$ is nonsolvable. By passing to an appropriate quotient, we can assume that $G$ is isomorphic to $\mr{A}_6$, $\mr{S}_6$, $\mr{A}_7$, $\psu{3}{5}$, $\psu{3}{5}.2$ or $G$ is an almost simple group with socle isomorphic to $\psl{2}{q}$ with $q \equiv 5 \, (8)$ or $\mr{G}_2(q)$ with $q=5^{2n+1}$ ($n \ge 0$, $n \neq 1$). We derive a contradiction by showing that each of these almost simple groups contains a second maximal subgroup whose index is neither almost odd nor a power of $2$.

If $G$ is isomorphic to $\mr{A}_6$, $\mr{S}_6$, $\mr{A}_7$, $\psu{3}{5}$ or $\psu{3}{5}.2$, then we can compute in GAP and find a second maximal subgroup violating the hypotheses of the theorem. Suppose $G$ is almost simple with socle $S \cong \psl{2}{q}$ and $q \equiv 5 \, (8)$. The simple group $S$ has a maximal subgroup $M \cong D_{q+1}$ of index $q(q-1)/2$, which divisible by $2$ but not~$4$. Since $c=1=\pi$ for this $M$, we know that $\Norm{G}{M}$ is maximal in $G$ of the same index. Let $C$ be the characteristic cyclic subgroup of $M$ of order $(q+1)/2$. Then $\Norm{G}{M} \le \Norm{G}{C} < G$, and we must have equality by the maximality of $\Norm{G}{M}$ in~$G$. The quotient $\Norm{G}{C}/\Cen{G}{C} \le \mr{Aut}{(C)}$ is abelian and has even order since it contains an involution from the dihedral group $M$. Therefore, $\Norm{G}{M}$ has a maximal subgroup $H$ of index $2$. The subgroup $H$ is second maximal in $G$ of index $q(q-1)$, which is neither almost odd nor a power of $2$. Finally, suppose $G$ is almost simple with socle $S \cong \mr{G}_2(q)$ and $q = 5^{2n+1}$ ($n \ge 0$, $n \neq 1$). The group $S$ has a maximal subgroup $M \cong \mr{SU}_3(q):2$ of index $[S:M]=\frac12 q^3(q^3-1)$, and $M$ has a maximal subgroup $H \cong \mr{SU}_3(q)$. Then $H$ is second maximal in $S$ of index $[S:H]=q^3(q^3-1)$, which is neither almost odd nor a power of $2$. Since the outer automorphism group $\gen{\phi}$ of $S$ consists only of field automorphisms, $K=H(\gen{\phi} \cap G)$ is a second maximal subgroup $G$ with index $[G:K]=[S:H]$.
\end{proof}

%%%%%%%%%%%%%%%%%%%%%%%%%%%%%%%%%%%%%%%
%%%%%%%%%%%% Section 5 %%%%%%%%%%%%%%%%
%%%%%%%%%%%%%%%%%%%%%%%%%%%%%%%%%%%%%%%
\section{Computer calculations}\label{sec:gap}

In this section, we give the GAP code that we used to verify the computational claims made in the body of the paper. The following code defines the function we used to check for faithful maximal subgroups of index divisible by $4$. 

\begin{lstlisting}
# This function counts (conjugacy classes of) maximal
# subgroups of index divisible by 4. If opt=1, the
# function outputs the structure and index of each
# maximal subgroup. This allows us to check if the
# maximal subgroup is faithful.

4divisible:=function(g,opt)
    local c,i,number;
    c:=MaximalSubgroupClassReps(g);;
    number:=0;
    for i in [1..Size(c)] do;
        if Index(g,c[i]) mod 4 = 0 then 
        number := number +1;
            if opt = 1 then
            Display([StructureDescription(c[i]), 
                PrimePowersInt(Index(g,c[i]))]);
            fi;
        fi; 
    od;
    return number;
end;
\end{lstlisting}

We complete the proof of Proposition \ref{p:psln} by computing in almost simple groups with socle $\psl{3}{4}$, and we see that there always exists a faithful maximal subgroup of index divisible by $4$.

\begin{lstlisting}
G:=PSL(3,4);;n:=Size(G);
A:=AutomorphismGroup(G);;
S:=List(ConjugacyClassesSubgroups(A),Representative);;
for i in [1..Size(S)] do;
    A1:=S[i];; n1:=Size(A1);;
    if n1 >= n then
    Display([n1/n,4divisible(A1,1)]); fi; od;
\end{lstlisting}

We check almost simple groups with socle $\psu{3}{3}$, $\psu{3}{5}$ and $\psu{5}{2}$ to complete the proof of Proposition \ref{p:psu}. This calculation shows, in particular, that the maximal subgroups of $\psu{3}{5}$ and $\psu{3}{5}.2$ have almost odd index.

\begin{lstlisting}
### PSU(3,3)
G:=PSU(3,3);;n:=Size(G);
A:=AutomorphismGroup(G);;
S:=List(ConjugacyClassesSubgroups(A),Representative);;
for i in [1..Size(S)] do;
    A1:=S[i];; n1:=Size(A1);;
    if n1 >= n then
    Display([n1/n,4divisible(A1,1)]); fi; od;

### PSU(3,5)
G:=PSU(3,5);;n:=Size(G);
A:=AutomorphismGroup(G);;
S:=List(ConjugacyClassesSubgroups(A),Representative);;
for i in [1..Size(S)] do;
    A1:=S[i];; n1:=Size(A1);;
    if n1 >= n then
    Display([n1/n,4divisible(A1,1)]); fi; od;

### PSU(5,2)
G:=PSU(5,2);;n:=Size(G);
A:=AutomorphismGroup(G);;
S:=List(ConjugacyClassesSubgroups(A),Representative);;
for i in [1..Size(S)] do;
    A1:=S[i];; n1:=Size(A1);;
    if n1 >= n then
    Display([n1/n,4divisible(A1,1)]); fi; od;
\end{lstlisting}

We check almost simple groups with socle $\psl{2}{9} \cong \mr{A}_6$ to finish the proof of Proposition~\ref{p:psl2-q5}.

\begin{lstlisting}
4divisible(AlternatingGroup(6),1);                    # 0
4divisible(SymmetricGroup(6),1);                      # 0
4divisible(MathieuGroup(10),1);                       # 1
4divisible(PGL(2,9),1);                               # 1
4divisible(AutomorphismGroup(AlternatingGroup(6)),1); # 1
\end{lstlisting}

We complete the proof of Proposition~\ref{p:alt} by finding maximal subgroups of index divisible by $4$ in $\mr{A}_n$ and $\mr{S}_n$ for $5 \le n \le 23$. The case $n=6$ was addressed above. In particular, this calculation shows that the maximal subgroups of $\mr{A}_7$ have almost odd index, but $\mr{S}_7$ has a maximal subgroup of index divisible by $4$. Note that any maximal subgroup of index divisible by $4$ is faithful since the outer automorphism group of a simple alternating group is abelian, so a maximal subgroup containing the socle has prime index.

\begin{lstlisting}
for n in [5..23] do;
    Display([n, 4divisible(AlternatingGroup(n),0),
        4divisible(SymmetricGroup(n),0)]);
od;
\end{lstlisting}

We complete the proof of Proposition~\ref{p:sporadic} by computing in GAP.

\begin{lstlisting}
spornames:=["M11", "M12", "M12.2", "M22", "M22.2", "M23","M24","J1", "J2", "J2.2", "J3", "J3.2", "J4", "Co1", "Co2", "Co3", "Fi22",  "Fi22.2", "Fi23", "F3+", "F3+.2", "HS", "HS.2", "McL", "McL.2",  "He", "He.2", "Ru", "Suz", "Suz.2", "ON", "ON.2", "HN", "HN.2",  "Ly", "Th", "B", "2F4(2)'", "2F4(2)"];; # Groups to check
Display(["Number of cases to check = ",Size(spornames)]);
sportable:=List(spornames,CharacterTable);; # Load character tables
for i in [1..Size(sportable)] do;
    gtbl:=sportable[i];  
    gsize:=Size(gtbl); # Size of group i
    maxs:=Maxes(sportable[i]); 
    maxtbls:=List(maxs,CharacterTable);
    maxsize:=List(maxtbls,Size); # Sizes of maximal subgroups
    number:=0; 
    for j in [1..Size(maxsize)] do; 
        if gsize/maxsize[j] mod 4 = 0 then; 
        number:=number+1;fi; 
    od;
    Display([spornames[i],number]);
od;
\end{lstlisting}

We complete the proof of Theorem \ref{thm:almost-odd-sect} and Theorem \ref{thm:4.2} by showing that some small almost simple groups contain maximal subgroups of index divisible by $3$ and that their Sylow $2$-subgroups have derived length $2$.
\begin{lstlisting}
3divisible:=function(g)
    local c,i,number;
    c:=MaximalSubgroupClassReps(g);;
    number:=0;
    for i in [1..Size(c)] do;
        if Index(g,c[i]) mod 3 = 0 then 
        number := number +1;
    fi; od;
    return number;
end;

DerivedLength(SylowSubgroup(SimpleGroup("G2(5)"),2));

3divisible(AlternatingGroup(5)); 3divisible(SymmetricGroup(5));

3divisible(AlternatingGroup(6)); 3divisible(SymmetricGroup(6)); 
DerivedLength(SylowSubgroup(SymmetricGroup(6),2));

3divisible(AlternatingGroup(7));
DerivedLength(SylowSubgroup(AlternatingGroup(7),2));

3divisible(PSU(3,5));
c:=MaximalSubgroupClassReps(AutomorphismGroup(PSU(3,5)));
g:=c[2];;StructureDescription(g);
3divisible(g);
DerivedLength(SylowSubgroup(AutomorphismGroup(PSU(3,5)),2));
\end{lstlisting}

We complete the proof of Theorem~\ref{thm:4.3} by showing that some small almost simple groups contain second maximal subgroups whose index is neither almost odd nor a power of $2$.
\begin{lstlisting}
2max:=function(g)
    local c,c2,i,j;
    c:=MaximalSubgroupClassReps(g);;
    for i in [1..Size(c)] do;
        c2:=MaximalSubgroupClassReps(c[i]);;
        for j in [1..Size(c2)] do;
        Display([PrimePowersInt(Index(g,c2[j])),
            StructureDescription(c2[j])]);
    od;od;
end;

2max(AlternatingGroup(6)); 
2max(SymmetricGroup(6));
2max(AlternatingGroup(7));
2max(PSU(3,5));
c:=MaximalSubgroupClassReps(AutomorphismGroup(PSU(3,5)));
g:=c[2];;StructureDescription(g);
2max(g);
\end{lstlisting}

%%%%%%%%%%%%%%%%%%%%%%%%%%%%%%%%%%%%%%%
%%%%%% Acknowledgements %%%%%%%%%%%%%%%
%%%%%%%%%%%%%%%%%%%%%%%%%%%%%%%%%%%%%%%
%\vspace{\baselineskip}
%\noindent \textbf{Acknowledgements.} 

%%%%%%%%%%%%%%%%%%%%%%%%%%%%%%%%%%%%%%%
%%%%%%%%%%%% Bibliography %%%%%%%%%%%%%
%%%%%%%%%%%%%%%%%%%%%%%%%%%%%%%%%%%%%%%

\bigskip \footnotesize

%%%%%%%%%%%%%%%%%%%%%%%%%%%%%%%%%%%%%%%
%%%%%%%%%%%% Address %%%%%%%%%%%%%%%%%%
%%%%%%%%%%%%%%%%%%%%%%%%%%%%%%%%%%%%%%%
\textsc{Department of Mathematics and Statistics, Binghamton University,
    Binghamton, NY 13902-6000, USA}\par\nopagebreak
  \textit{E-mail address}: \texttt{cschroe2@binghamton.edu}

\vspace{\baselineskip}

\textsc{Department of Mathematics and Statistics, Binghamton University,
    Binghamton, NY 13902-6000, USA}\par\nopagebreak
  \textit{E-mail address}: \texttt{htongvie@binghamton.edu}
  

\begin{thebibliography}{10}

\bibitem{AS85} M.~G. Aschbacher and L.~L. Scott, Maximal subgroups of finite groups, J. Algebra {\bf 92} (1985), no.~1, 44--80.

\bibitem{BHR13} J.~N. Bray, D.~F. Holt and C.~M. Roney-Dougal, {\it The maximal subgroups of the low-dimensional finite classical groups}, London Mathematical Society Lecture Note Series, 407, Cambridge Univ. Press, Cambridge, 2013.

\bibitem{CH90} D. Chillag and M. Herzog, On the length of the conjugacy classes of finite groups, J. Algebra {\bf 131} (1990), no.~1, 110--125.

\bibitem{Cr22} D.~A. Craven, On the maximal subgroups of $E_7(q)$ and related almost simple groups {\tt arXiv:2201.07081 [math.GR]} (2022).

\bibitem{Cr23} D.~A. Craven, The maximal subgroups of the exceptional groups $F_4(q)$, $E_6(q)$ and $^2E_6(q)$ and related almost simple groups, Invent. Math. {\bf 234} (2023), no.~2, 637--719.

\bibitem{DLPP24} H. Dietrich, M. Lee, A. Pisani and T. Popiel, Explicit construction of the maximal subgroups of the Monster, {\tt arXiv:2411.12230 [math.GR]} (2024).

\bibitem{DNT08} S. Dolfi, G. Navarro~Ortega and P.~H. Tiep, Primes dividing the degrees of the real characters, Math. Z. {\bf 259} (2008), no.~4, 755--774.

\bibitem{GAP4} The GAP~Group, \emph{GAP -- Groups, Algorithms, and Programming, Version 4.12.2}; 2022, \url{https://www.gap-system.org}.

\bibitem{Gi07} M. Giudici, Maximal subgroups of almost simple groups with socle $\psl{2}{q}$, {\tt arXiv:0703685 [math.GR]} (2007).

\bibitem{Gu86} R.~M. Guralnick, Generation of simple groups, J. Algebra {\bf 103} (1986), no.~1, 381--401.

\bibitem{Hu67} B.~Huppert, {\it Endliche Gruppen I}, Die Grundlehren der mathematischen Wissenschaften, Band~134, Springer, Berlin-New York, 1967.

\bibitem{KL90} P.~B. Kleidman and M.~W. Liebeck, {\it The subgroup structure of the finite classical groups}, London Mathematical Society Lecture Note Series, 129, Cambridge Univ. Press, Cambridge, 1990.

\bibitem{Ko86} L.~G. Kov\'acs, Maximal subgroups in composite finite groups, J. Algebra {\bf 99} (1986), no.~1, 114--131.

\bibitem{Ku1852} E.~E. Kummer, \"Uber die Erg\"anzungss\"atze zu den allgemeinen Reciprocit\"atsgesetzen, J. reine angew. Math. {\bf 44} (1852), 93--146.

\bibitem{Le07} M.~L. Lewis, Generalizing a theorem of Huppert and Manz, J. Algebra Appl. {\bf 6} (2007), no.~4, 687--695.

\bibitem{LPS87} M.~W. Liebeck, C.~E. Praeger and J. Saxl, A classification of the maximal subgroups of the finite alternating and symmetric groups, J. Algebra {\bf 111} (1987), no.~2, 365--383.

\bibitem{LS87} M.~W. Liebeck and J. Saxl, On the orders of maximal subgroups of the finite exceptional groups of Lie type, Proc. London Math. Soc. (3) {\bf 55} (1987), no.~2, 299--330.

\bibitem{Ma91} G. Malle, The maximal subgroups of ${}^2F_4(q^2)$, J. Algebra {\bf 139} (1991), no.~1, 52--69.

\bibitem{MT11} G. Malle and D.~M. Testerman, {\it Linear algebraic groups and finite groups of Lie type}, Cambridge Studies in Advanced Mathematics, 133, Cambridge Univ. Press, Cambridge, 2011.

\bibitem{Na18} G. Navarro~Ortega, {\it Character theory and the McKay conjecture}, Cambridge Studies in Advanced Mathematics, 175, Cambridge Univ. Press, Cambridge, 2018.

\bibitem{Th97} J. Th\'evenaz, Maximal subgroups of direct products, J. Algebra {\bf 198} (1997), no.~2, 352--361.

\bibitem{HTV13} H.~P. Tong-Viet, Groups with some arithmetic conditions on real class sizes, Acta Math. Hungar. {\bf 140} (2013), no.~1-2, 105--116.

\end{thebibliography}
\end{document}